\newtheorem{theorem}{Theorem}[section]
\newtheorem{conj}[theorem]{Conjecture}
\DeclareSymbolFont{cyrletters}{OT2}{wncyr}{m}{n}\DeclareMathSymbol{\Sha}{\mathalpha}{cyrletters}{"58}
\renewcommand{\phi}{{\varphi}}
\newcommand{\Qp}{\mathbf{Q}_p}
\newcommand{\Zp}{\mathbf{Z}_p}
\renewcommand{\geq}{\geqslant}
\renewcommand{\leq}{\leqslant}
\newcommand{\cyc}{\mathrm{cyc}}
\newcommand{\Dcrisv}{\mathbb{D}_{\mathrm{cris},v}}
\newcommand{\Dcrisq}{\mathbb{D}_{\mathrm{cris},\gq}}
\newcommand{\HIw}{H^1_{\mathrm{Iw}}}
\newcommand{\Fil}{\mathrm{Fil}}
\newcommand{\cris}{\mathrm{cris}}
\newcommand{\links}{\left(\begin{array}{cc}}
\newcommand{\rechts}{\end{array}\right)}
\newcommand{\bai}{\left[\begin{array}{cc}}
\newcommand{\dai}{\end{array}\right]}
\newcommand{\hidari}{\left(\begin{array}{c}}
\newcommand{\migi}{\end{array}\right)}
\newcommand{\Q}{\mathbb{Q}}
\newcommand{\Z}{\mathbb{Z}}
\newcommand{\gp}{{\mathfrak p}}
\newcommand{\gq}{{\mathfrak q}}
\newcommand{\Gal}{\operatorname{Gal}}
\newcommand{\Hom}{\operatorname{Hom}}
\newcommand{\rank}{\operatorname{rank}}
\newcommand{\coker}{\operatorname{coker}}
\newcommand{\col}{\operatorname{Col}}
\newcommand{\im}{\operatorname{Im}}
\newcommand{\Sel}{\operatorname{Sel}}
\newcommand{\tensor}{\otimes} % binary tensor product
\newtheorem{auxiliary proposition}[theorem]{Auxiliary Proposition}
\newtheorem{corollary}[theorem]{Corollary}
\newtheorem{definition}[theorem]{Definition}
\newtheorem{lemma}[theorem]{Lemma}
\newtheorem{main conjecture}[theorem]{Main Conjecture}
\newtheorem{main theorem}[theorem]{Main Theorem}
\newtheorem{modesty proposition}[theorem]{Modesty Proposition}
\newtheorem{open problem}[theorem]{Open Problem}
\newtheorem{proposition}[theorem]{Proposition}
\newtheorem{remark}[theorem]{Remark}
\newtheorem{convergence lemma}[theorem]{Convergence Lemma}
\newtheorem{corrected lemma}[theorem]{Corrected Lemma}
\newtheorem{growth lemma}[theorem]{Growth Lemma}
\newtheorem{coefficient lemma}[theorem]{Integrality Lemma}
\newtheorem{interpolation lemma}[theorem]{Interpolation Lemma}
\newtheorem{kernel lemma}[theorem]{Kernel Lemma}
\newtheorem{limit lemma}[theorem]{Limit Lemma}
\newtheorem{tandem lemma}[theorem]{Modesty Lemma}
\newtheorem{zero-finding lemma}[theorem]{Zero-Finding Lemma}
\newcommand{\rhon}{{\rho_{\vec{n}}}}
\title[Selmer groups for motives at non-ordinary primes]{On the signed Selmer groups for motives at non-ordinary primes {in $\Z_{\MakeLowercase{p}}^2$-extensions}}
\author[J. Ray]{Jishnu Ray}
\address[Ray]{Harish Chandra Research Institute, A CI of Homi Bhabha National Institute,  Chhatnag Road, Jhunsi, Prayagraj (Allahabad) 211 019 India}
\email{jishnuray@hri.res.in}
\author[F. Sprung]{Florian Sprung}
\address[Sprung]{School of Mathematical and Statistical Sciences, Arizona State University Tempe, AZ 85287-1804, USA}
\email{florian.sprung@asu.edu}
\keywords{non-ordinary motives, signed Selmer groups and Coleman maps, control theorem, $\Z_p^2$ extension of an imaginary quadratic field, $\mu$-invariants under congruences.}
\subjclass[2020]{Primary: 11R23,  Secondary: 11G10, 11R20 }
\begin{document}

\maketitle
\begin{abstract}
Generalizing the work of Kobayashi and the second author for elliptic curves with supersingular reduction at the prime $p$,  Büyükboduk and Lei constructed multi-signed Selmer groups over the cyclotomic $\Z_p$-extension of a number field $F$ for more general non-ordinary motives. In particular, their construction applies to abelian varieties over $F$ with good supersingular  reduction  at all the primes of $F$ above $p$. In this article, we scrutinize the case in which $F$ is imaginary quadratic, and prove a control theorem (that generalizes Kim's control theorem for elliptic curves) of multi-signed Selmer groups of non-ordinary motives  over the maximal abelian pro-$p$ extension of $F$ that is unramified outside $p$, which is the $\Z_p^2$-extension of $F$. We apply it to derive a sufficient condition when these multi-signed Selmer groups are cotorsion over the corresponding two-variable Iwasawa algebra. Furthermore, we compare the Iwasawa $\mu$-invariants of multi-signed Selmer groups over the $\Z_p^2$-extension for two such representations which are congruent modulo $p$.
\end{abstract}
\section{Introduction}
The goals of this article are to 
\begin{enumerate}
    \item establish cotorsion of certain Selmer groups associated to motives in the non-ordinary case via a control theorem, and to
    \item prove a vanishing statement of $\mu$-invariants associated to motives that are congruent in an appropriate sense.
\end{enumerate}

\subsection{The objects involved} Suppose $p$ is an odd prime, $F$ be an imaginary quadratic field in which $p$ splits into two primes $\gp$ and $\gp^c$. Let $F_\cyc$ and $F_\infty$ be the cyclotomic $\Z_p$-extension and the unique $\Z_p^2$-extension of $F$; thus $F_\infty$ contains $F_\cyc$. Let $\Omega=\Gal(F_\infty/F)$ and $\Gamma=\Gal(F_\cyc/F)$.  For $E$ an elliptic curve over $\Q$ with supersingular reduction at $p$ and $a_p=0$,  Kobayashi \cite{kobayashi03}  constructed signed Selmer groups $\Sel^\pm(E/\Q_\cyc)$ which are  cotorsion over the Iwasawa algebra $\Z_p[[\Gal(\Q_\cyc/\Q)]]$. For a construction that includes the general case, see \cite{sprung09}, where the modified Selmer groups are called $\sharp,\flat$-signed Selmer groups over $\Q_\cyc$. Generalizing the work of Kobayashi, Kim constructed multi-signed Selmer groups (assuming $a_p=0$) $\Sel^{\pm,\pm}(E/F_\infty)$ over the $\Z_p^2$-extension $F_\infty$ which are also conjectured to be cotorsion as  $\Z_p[[\Omega]]$-modules \cite{kimdoublysigned}. Out of these four signed Selmer groups, only the ones with the same signs $\Sel^{+,+}(E/F_\infty)$ $\Sel^{-,-}(E/F_\infty)$ are known to be cotorsion (see \cite[Remark 8.5]{LeiPal19}). In the general case \cite{sprung16} (i.e. removing the $a_p=0$ condition), the second author has given a construction which shows that at least one of the chromatic Selmer groups is cotorsion.
The local conditions defining these Selmer groups were then later reinterpreted in terms of Coleman maps using the theory of Wach-modules which enables one to define signed Selmer groups over $F_\cyc$ also for modular forms which are  non-ordinary at $p$ (see \cite{leiloefflerzerbes10}, \cite{leiloefflerzerbes11}). 
Generalizing the work of Kobayashi, Büyükboduk and Lei constructed signed Selmer groups over the cyclotomic extension of any number field for motives $\mathcal{M}$ over $\Q$ which are non-ordinary at $p$. Let $\mathcal{M}_p$ be the $p$-adic realization of the motive $\mathcal{M}$ with a $\Z_p$-lattice $T$ which is $G_F$-stable. Set $M^*=T^*\otimes \Q_p/\Z_p$ where $T^*=\Hom(T,\Z_p(1))$ is the Tate dual of $T$. Upon fixing a Hodge-compatible basis of the Dieudonné module attached to $T$ (see Section \ref{sub:HC}), for each $\underline{I}$ varying over a certain set (see \cite[Definition 3.1]{BL17}), Büyükboduk and Lei constructed the signed Selmer group $\Sel_{\underline{I}}(M^*/F_\cyc)$ which they conjecture to be $\Z_p[[\Gamma]]$-cotorsion. When $T$ is the Tate module of an elliptic curve, their construction  recovers Kobayashi's signed Selmer groups \cite[p. 397]{BL17}.
The assumption that the signed Selmer group $\Sel_{\underline{I}}(M^*/F_\cyc)$ is cotorsion as a $\Z_p[[\Gamma]]$-module can be found in many occasions throughout literature, see e.g. results in \cite{ponsinet}.  

\subsection{The control theorem} Suppose that the motive $\mathcal{M}$ is non-ordinary at both primes $\gp$ and $\gp^c$. Using the big dual exponential map of Loeffler--Zerbes for $F_\infty$ constructed in \cite{LZ14}, one can construct the multi-signed Selmer group $\Sel_{\underline{I}}(M^*/F_\infty)$ for the non-ordinary motive $\mathcal{M}$ over the $\Z_p^2$-extension $F_\infty$. We are mainly interested in finding a sufficient condition for this multi-signed Selmer group to be cotorsion as a $\Z_p[[\Omega]]$-module that works in this general setup. Note that in \cite{Dion2022}, the author \textit{assumed} that this Selmer group is cotorsion while proving a two variable algebraic functional equation for this Selmer group (see \cite[Theorem A]{Dion2022}).

 Our main result in this article is to show the following.
 \begin{corollary}
 [see Corollary \ref{prop:check}]
If
the Bloch-Kato Selmer group $\Sel_{\mathrm{BK}}(M^*/F)$ is finite then the Selmer group  $\Sel_{\underline{I}}(M^*/F_\infty)$ is $\Z_p[[\Omega]]$-cotorsion.
 \end{corollary}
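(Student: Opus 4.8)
The plan is to derive the statement from the control theorem proved in this paper by a descent argument over the two-variable Iwasawa algebra $\Lambda:=\Z_p[[\Omega]]$, which, after choosing topological generators of $\Omega\cong\Z_p^2$, is identified with $\Z_p[[S,T]]$. First I would use the control theorem to compare $\Sel_{\underline I}(M^*/F_\infty)$ with the Selmer group over the base field: the restriction map realizes $\Sel_{\underline I}(M^*/F_\infty)^{\Omega}$ with finite kernel and cokernel over $\Sel_{\underline I}(M^*/F)$, and over $F$ itself the signed local conditions at $\gp$ and $\gp^c$ cut out subgroups of $H^1(F_\gp,M^*)$, resp.\ $H^1(F_{\gp^c},M^*)$, that agree with the Bloch--Kato local conditions up to finite groups, so $\Sel_{\underline I}(M^*/F)$ is commensurable with $\Sel_{\mathrm{BK}}(M^*/F)$. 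Granting this, the hypothesis that $\Sel_{\mathrm{BK}}(M^*/F)$ is finite forces $\Sel_{\underline I}(M^*/F_\infty)^{\Omega}$ to be finite.

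Next I would pass to Pontryagin duals. Put $X:=\Sel_{\underline I}(M^*/F_\infty)^\vee$, a compact $\Lambda$-module, and let $I_\Omega\subset\Lambda$ be the augmentation ideal; then $X/I_\Omega X=\bigl(\Sel_{\underline I}(M^*/F_\infty)^{\Omega}\bigr)^\vee$ is finite by the previous step. Since $\Lambda$ is local with maximal ideal $\mathfrak m=(p)+I_\Omega$, the quotient $X/\mathfrak m X$ is a quotient of $X/I_\Omega X$, hence finite, so topological Nakayama shows $X$ is a finitely generated $\Lambda$-module. It then remains to verify the elementary fact that a finitely generated $\Lambda=\Z_p[[S,T]]$-module $X$ with $X/(S,T)X$ finite is $\Lambda$-torsion: choosing a finite presentation $\Lambda^a\xrightarrow{B}\Lambda^m\to X\to 0$ and applying $-\otimes_\Lambda\Z_p$ gives $\Z_p^a\xrightarrow{\overline B}\Z_p^m\to X/(S,T)X\to 0$ with finite cokernel, so $\overline B$ has rank $m$; an $m\times m$ submatrix of $\overline B$ with nonzero determinant corresponds to an $m\times m$ submatrix of $B$ whose determinant reduces to it modulo $(S,T)$ and is therefore nonzero in $\Lambda$, whence $B\otimes_\Lambda\Frac(\Lambda)$ is surjective and $X\otimes_\Lambda\Frac(\Lambda)=\coker\bigl(B\otimes_\Lambda\Frac(\Lambda)\bigr)=0$. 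Thus $X$ is $\Lambda$-torsion, i.e.\ $\Sel_{\underline I}(M^*/F_\infty)$ is $\Z_p[[\Omega]]$-cotorsion. (Alternatively one could route the argument through $F_\cyc$: combine the $\Z_p[[\Gamma]]$-cotorsion of $\Sel_{\underline I}(M^*/F_\cyc)$ under the same hypothesis with the control theorem relating $F_\infty$ to $F_\cyc$, using that $\gamma_H-1$ is a prime element of $\Lambda$ for $\gamma_H$ a topological generator of $\Gal(F_\infty/F_\cyc)$ in order to descend torsion-ness.)

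The difficulty is entirely concentrated in the first step, namely the control theorem itself; the module-theoretic descent above is formal. The soft part of the control theorem is bounding the global term and the local terms away from $p$, which goes as in Kobayashi's and Kim's arguments; the genuinely delicate point is the analysis of the local conditions at $\gp$ and $\gp^c$, where one must use the Loeffler--Zerbes big dual exponential over $F_\infty$ and the structure of $H^1_{\mathrm{Iw}}$ in the two $\Z_p$-directions to show that the signed local conditions propagate compatibly along the whole $\Z_p^2$-tower and that their discrepancy with the Bloch--Kato conditions at each finite layer is finite and uniformly bounded. Once that input is in hand, the passage to $\Z_p[[\Omega]]$-cotorsion is exactly the argument above.
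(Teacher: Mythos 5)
Your proposal is correct and follows essentially the same route as the paper: the control theorem gives finiteness of $\Sel_{\underline{I}}(M^*/F_\infty)^{\Omega}$ once the base-level signed Selmer group is identified (the paper uses Ponsinet's Lemma 1.11, giving equality with $\Sel_{\mathrm{BK}}(M^*/F)$ rather than mere commensurability), and then cotorsion follows by Nakayama over $\Z_p[[\Omega]]$. The only difference is cosmetic: you prove the finitely-generated-plus-torsion step by hand via a presentation and a determinant argument, whereas the paper cites Balister--Howson for the same Nakayama-type statement.
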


The key  ingredient that goes into the proof of the above  corollary is a control theorem (similar to Kim's for elliptic curves \cite{kimdoublysigned}).
We show that 
\begin{theorem} [see Theorem \ref{thm:mainKIM}]
For all but finitely many $s\in \Z$, the kernel and the cokernel of the restriction map $$ \rho_{\vec{n}}:\Sel_{\underline{I}^c}(M_s/F_{\vec{n}}) \rightarrow \Sel_{\underline{I}^c}(M_s/F_\infty)^{\Omega_{\vec{n}}}$$ are bounded as $\vec{n}$ varies.
\end{theorem}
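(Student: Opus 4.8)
The plan is to run the Mazur--Greenberg descent machinery: study $\rho_{\vec{n}}$ via the snake lemma applied to the fundamental diagram relating the signed Selmer group at level $F_{\vec{n}}$ to the $\Omega_{\vec{n}}$-invariants of the one at level $F_\infty$, the only genuinely new input being the behaviour of the signed local conditions at $p$ under descent. Fix a finite set $S$ of places of $F$ containing the archimedean place, the primes above $p$, and the primes ramified in $M_s$, and let $F_S/F$ be the maximal extension unramified outside $S$; since $F_\infty/F$ is unramified outside $p$ one has $F_\infty\subseteq F_S$. For each $\vec{n}$ consider the commutative diagram with exact rows
\begin{CD}
0 @>>> \Sel_{\underline{I}^c}(M_s/F_{\vec{n}}) @>>> H^1(\Gal(F_S/F_{\vec{n}}),M_s) @>>> \bigoplus_{w\mid p}\frac{H^1(F_{\vec{n},w},M_s)}{H^1_{\underline{I}^c}(F_{\vec{n},w})}\ \oplus\ \calJ_{\vec{n}}\\
@. @V{\rho_{\vec{n}}}VV @V{g_{\vec{n}}}VV @V{h_{\vec{n}}}VV\\
0 @>>> \Sel_{\underline{I}^c}(M_s/F_\infty)^{\Omega_{\vec{n}}} @>>> H^1(\Gal(F_S/F_\infty),M_s)^{\Omega_{\vec{n}}} @>>> \Bigl(\bigoplus_{w\mid p}\frac{H^1(F_{\infty,w},M_s)}{H^1_{\underline{I}^c}(F_{\infty,w})}\ \oplus\ \calJ_\infty\Bigr)^{\Omega_{\vec{n}}}
\end{CD}
in which $w$ runs over the places above $p$ (finitely many, with number bounded in $\vec{n}$ since $F_\infty/F$ is ramified at $\gp$ and $\gp^c$), $H^1_{\underline{I}^c}$ is the signed local condition cut out by the Coleman map/big dual exponential of \cite{BL17,LZ14}, $\calJ_{\vec{n}}$ and $\calJ_\infty$ collect the local conditions at the places of $S$ away from $p$, and the vertical maps are restriction. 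In the form used by Greenberg for control theorems, the snake lemma bounds $|\ker\rho_{\vec{n}}|$ by $|\ker g_{\vec{n}}|$ and $|\coker\rho_{\vec{n}}|$ by $|\ker h_{\vec{n}}|\cdot|\coker g_{\vec{n}}|$, so it suffices to bound $\ker g_{\vec{n}}$, $\coker g_{\vec{n}}$ and $\ker h_{\vec{n}}$ uniformly in $\vec{n}$.

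For the global maps I would feed the group extension $1\to\Gal(F_S/F_\infty)\to\Gal(F_S/F_{\vec{n}})\to\Omega_{\vec{n}}\to1$ into inflation--restriction. As $M_s$ is unramified outside $S$ we may replace $\Gal(F_S/F_\infty)$-invariants by $M_s^{G_{F_\infty}}$, and the five-term exact sequence gives $\ker g_{\vec{n}}=H^1(\Omega_{\vec{n}},M_s^{G_{F_\infty}})$ and $\coker g_{\vec{n}}\hookrightarrow H^2(\Omega_{\vec{n}},M_s^{G_{F_\infty}})$. Every $\Omega_{\vec{n}}$ is isomorphic to $\Z_p^2$, and for all $s$ outside a finite set the module $M_s^{G_{F_\infty}}$ is finite: this finiteness (together with the parallel ones at the completions $F_{\infty,w}$ and over $F_{\vec{n}}$) is the source of the exclusion of finitely many $s$, and follows from a standard twisting argument. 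Computing $H^i(\Z_p^2,N)$ by the Koszul complex of a pair of topological generators gives $|H^i(\Omega_{\vec{n}},N)|\leq|N|^2$ for all $i$; since $M_s^{G_{F_\infty}}$ is independent of $\vec{n}$ this bounds $\ker g_{\vec{n}}$ and $\coker g_{\vec{n}}$ uniformly. The same computation over $F_{\infty,w}$ bounds the kernel and cokernel of $H^1(F_{\vec{n},w},M_s)\to H^1(F_{\infty,w},M_s)^{\Omega_{\vec{n}}}$ by powers of $|M_s^{G_{F_{\infty,w}}}|$, again uniformly in $\vec{n}$. Finally, the contribution of $\calJ_{\vec{n}}$ to $\ker h_{\vec{n}}$, coming from the places $v\in S$ with $v\nmid p$, is bounded uniformly in $\vec{n}$ by the classical argument of Greenberg --- using Shapiro's lemma to absorb the growing number of primes of $F_{\vec{n}}$ above such $v$ --- so those places may be set aside.

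It remains to bound $\ker h_{\vec{n},w}$ at $w\mid p$, which carries the new content. Here I would use the description of the signed conditions via Coleman maps: by construction $H^1(F_{\infty,w},M_s)/H^1_{\underline{I}^c}(F_{\infty,w})$ is, up to Pontryagin duality, cut out by the relevant component $\col_{\underline{I}^c}\colon\HIw(F_{\infty,w},T^*)\to\Z_p[[\Omega]]$ of the two-variable Loeffler--Zerbes big dual exponential, and $H^1_{\underline{I}^c}(F_{\vec{n},w})$ is obtained from it by specialisation along $\Z_p[[\Omega]]\twoheadrightarrow\Z_p[\Omega/\Omega_{\vec{n}}]$. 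I would then place the defining sequences of the signed conditions at the two levels into a diagram, apply the snake lemma once more, and reduce $\ker h_{\vec{n},w}$ (and $\coker h_{\vec{n},w}$) to two ingredients: (i) the control of the ambient local cohomology $H^1(F_{\vec{n},w},M_s)\to H^1(F_{\infty,w},M_s)^{\Omega_{\vec{n}}}$, already bounded above; and (ii) the control of the specialisation of $\im(\col_{\underline{I}^c})$, for which one invokes that $\HIw(F_{\infty,w},T^*)$ is free of the expected rank over $\Z_p[[\Omega]]$ and that $\col_{\underline{I}^c}$ is surjective, or at worst has cokernel annihilated by a power of $p$ independent of $\vec{n}$ --- the structural facts established in \cite{BL17,LZ14} --- the residual error being again governed by $H^0$-terms that are finite and bounded along the tower once $s$ avoids the finite exceptional set. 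Granting (i) and (ii), a diagram chase bounds $\ker h_{\vec{n},w}$ uniformly in $\vec{n}$, and summing over the boundedly many $w\mid p$ bounds $\ker h_{\vec{n}}$.

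Putting these together proves the theorem for all $s$ outside a fixed finite set. The main obstacle is the local analysis at $p$ --- establishing ingredient (ii): one must unwind the signed local conditions in terms of Wach modules and the two-variable big dual exponential of \cite{LZ14} and verify that this machinery descends from $\Omega$ to each $\Omega_{\vec{n}}$ with error bounded uniformly in $\vec{n}$, which is the motivic, two-variable analogue of the corresponding local computations in Kim's work \cite{kimdoublysigned}. Everything else is the standard Greenberg descent formalism, and the cyclotomic specialisation of the whole argument recovers the known control theorem over $F_\cyc$.
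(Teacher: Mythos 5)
Your overall strategy (the fundamental diagram, snake lemma, inflation--restriction for the global map, separate treatment of places above $p$, away from $p$, and archimedean, with the exclusion of finitely many twists $s$ coming from finiteness of invariant submodules) is the same descent formalism the paper uses, and most of it goes through. However, at the decisive point --- the local condition at $\gp$ and $\gp^c$ --- your proposal both departs from the paper and leaves a genuine gap. You take the finite-level signed condition $H^1_{\underline{I}^c}(F_{\vec{n},w})$ to be cut out by specialising the two-variable Coleman map along $\Z_p[[\Omega]]\twoheadrightarrow\Z_p[\Omega/\Omega_{\vec{n}}]$, and you correctly identify that the whole theorem then hinges on your ``ingredient (ii)'': uniform control of the image/cokernel of that specialisation. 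But you never prove (ii); you only assert that the needed surjectivity (or bounded cokernel, uniformly in $\vec{n}$) follows from ``structural facts established in \cite{BL17,LZ14}''. Those references do not supply this in the two-variable setting in the form you need, and establishing it is real work --- it is precisely the content that a Kim-style control theorem must supply if one defines the finite-level conditions by specialisation. As written, the main obstacle you yourself flag is left open, so the proof is incomplete.

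The paper avoids this issue entirely by a different choice of definition: the signed local condition at a finite layer is \emph{defined} as $H^1_{I_\gq}(F_{\vec{n},\gq},M^*):=H^1_{I_\gq}(F_{\infty,\gq},M^*)^{\Omega_{\vec{n}}}$ (following Ponsinet and Burungale--B\"uy\"ukboduk--Lei), so in the local diagram at $v\mid p$ the left vertical map is an isomorphism by definition, and the middle one is an isomorphism by inflation--restriction together with \textbf{(Tors.)}, which gives $H^0(F_{\infty,\gq},M_s)=0$; hence the contribution at $p$ to $\ker(\Xi_{\vec{n}})$ vanishes identically and no Coleman-map control is needed. The same vanishing of $H^0$ makes the global restriction map injective with trivial relevant cokernel, so in fact $\ker\rho_{\vec{n}}=0$ exactly (your Koszul-complex bound via finiteness of $M_s^{G_{F_\infty}}$ is weaker but would suffice for boundedness). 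The exclusion of finitely many $s$ in the paper arises only at the places $v\in\Sigma$, $v\nmid p$: there the decomposition group of $v$ in $\Omega$ is a single copy of $\Z_p$ (the unramified extension), and a one-generator cohomology lemma bounds $H^1\bigl(F_{\infty,v}/F_{\vec{n},v},M_s^{G_{F_{\infty,v}}}\bigr)$ by the order of $M_s^{G_{F_{\infty,v}}}/(M_s^{G_{F_{\infty,v}}})_{\mathrm{div}}$, uniformly in $\vec{n}$, once $M_s^{G_{F_{\vec{n},v}}}$ is finite. So either adopt the invariants definition of the finite-level condition (in which case your hard step disappears), or supply a genuine proof of your ingredient (ii); citing \cite{BL17,LZ14} is not enough.
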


Here, we have fixed
 generators $(\gamma_1,  \gamma_2)$ of $\Omega\cong \Zp^2$ and for a choice of non-negative integers $\vec{n}:=(n_1, n_2),  n_i\geq 0$, we have denoted by $\Omega_{\vec{n}}$ the subgroup of $\Omega$ generated by $(\gamma_1^{n_1}, \gamma_2^{n_2})$, and the extension  $F_{\vec{n}}$ is the extension $F_\infty^{\Omega_{\vec{n}}}$. The Selmer group  $\Sel_{\underline{I}^c}(M_s/F_*)$ is a twisted Selmer group (twisting $M:=T\otimes \Q_p/\Z_p$ by the $s$-th power of the cyclotomic character) defined in Section \ref{sec:twisted} for a certain indexing set $\underline{I}^c$, cf. Remark \ref{rem:mithu}.

\subsection{Congruence and previous works} In the last part of this article, we study congruences of the signed Selmer group $\Sel_{\underline{I}}(M^*/F_\infty)$. For elliptic curves $E$ and $E^\prime$ over $\Q$ with good ordinary reduction at $p$ such that $E[p] \cong E^\prime[p]$ as Galois modules, Greenberg and Vatsal \cite{GreenbergVatsal} initiated the study of such congruences in Iwasawa theory. They showed that the $\mu$-invariant of the Pontryagin dual of $\Sel_p(E/\Q_\cyc)$ vanishes if and only if the $\mu$-invariant of the Pontryagin dual of $\Sel_p(E^\prime/\Q_\cyc)$ vanishes. Kim generalized this result for supersingular elliptic curves for signed Selmer groups  over the cyclotomic $\Z_p$-extension of $\Q$ (see \cite{Kim09}). It was further generalized by Ponsinet to the case of supersingular abelian varieties and the multi-signed Selmer groups of Bübükboduk--Lei \cite{BL17} over the cyclotomic extension of a number field (\cite[Theorem 3.13]{ponsinet}).
 Under additional hypotheses, in \cite[Theorem 4.15]{FilippoSujatha4}  the result of \cite{Kim09} was generalized to a wider context for supersingular elliptic curves, while still working  over the cyclotomic extension, which was further generalized  for supersingular elliptic curves over the $\Z_p^2$-extension of an imaginary quadratic field $F$ where $p$ splits under the assumption of Conjecture A (see \cite[Corollary 4.6]{Hamidi}). 
 Conjecture A is a conjecture regarding the vanishing of $\mu$-invariant of the dual \textit{fine} $p^\infty$-Selmer group over the cyclotomic $\Z_p$-extension of a number field (see \cite{CoatesSujatha_fineSelmer}).

\subsection{Context and contrast of our result on congruence} In this article we generalize both the results \cite[Theorem 3.13]{ponsinet} and \cite[Corollary 4.6]{Hamidi} mentioned above. In particular, we work in the setting of supersingular abelian varieties and multi-signed Selmer groups of Bübükboduk--Lei  over the $\Z_p^2$-extension $F_\infty$. We note that in contrast to Hamidi's result, our theorem stated below is not dependent on Conjecture A.
 
 Hamidi's result \textit{(loc.cit)} were stated for supersingular elliptic curves with $a_{\gp}=a_{\gp^c}=0$ where $\gp$ and $\gp^c$ are two primes of $F$ above $p$ (see \cite[Hyp 1 (iv)]{Hamidi})\footnote{The analogue of the assumption $a_p=0$  in the context of supersingular abelian varieties is an assumption on the shape of the matrix of Frobenius (on a basis of the associated Dieudonné module). This shape is a specific block anti-diagonal form (see \cite[Section 3.2.1]{LP20}) -- the abelian varieties of the $GL(2)$-type provide such examples (see \cite[Section 3.3]{LP20}).}. We remove these assumptions to arrive at the following result.  Let $\mathcal{X}_{\underline{I}}(M^*/F_\infty)$ and $\mathcal{X}_{\underline{I}}({M^\prime}^*/F_\infty)$ be the Pontryagin duals of the Selmer groups $\Sel_{\underline{I}}(M^*/F_\infty)$ and $\Sel_{\underline{I}}({M^\prime}^*/F_\infty)$, where $M'$ comes from a motive $\mathcal{M'}$ with the same residual representation as $\mathcal{M}$. 
\begin{theorem}[see Theorem \ref{them:congruence_mu}]
 {Let $\mathcal{M}$ and $\mathcal{M'}$ have $G_F$-stable lattices $T$ and $T'$ with congruent mod $p$ reduction that satisfy certain mostly  $p$-adic Hodge theoretic hypotheses (H.-T.), (Cryst.), (Tors.), (Fil), and (Slopes) described in detail in the Preliminaries section. } Suppose that $\mathcal{X}_{\underline{I}}(M^*/F_\infty)$ and $\mathcal{X}_{\underline{I}}({M^\prime}^*/F_\infty)$
have the same corank as  $\Z_p[[\Omega]]$-modules. Then the $\mu$-invariant of $\mathcal{X}_{\underline{I}}(M^*/F_\infty)$ vanishes if and only if the $\mu$-invariant of 
$\mathcal{X}_{\underline{I}}({M^\prime}^*/F_\infty)$ vanishes.
\end{theorem}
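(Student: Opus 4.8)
The plan is to follow the Greenberg--Vatsal strategy, in the two-variable form developed by Kim and Hamidi and, for general motives, by Ponsinet. Write $\Lambda:=\Z_p[[\Omega]]\cong\Z_p[[S_1,S_2]]$, a two-dimensional regular local ring, and $R:=\Lambda/p\Lambda$. The whole argument rests on one elementary observation about a finitely generated $\Lambda$-module $Y$: the localisation $\Lambda_{(p)}$ at the height-one prime $(p)$ is a discrete valuation ring with residue field $\Frac(R)$, so from the elementary-divisor decomposition of $Y_{(p)}$ one reads off that $\rank_R(Y/pY)-\rank_\Lambda(Y)$ is exactly the number of $p$-power torsion summands of $Y_{(p)}$; hence
\[
\mu(Y)=0 \quad\Longleftrightarrow\quad \rank_R(Y/pY)=\rank_\Lambda(Y).
\]
Since the corank hypothesis gives $\rank_\Lambda\mathcal{X}_{\underline{I}}(M^*/F_\infty)=\rank_\Lambda\mathcal{X}_{\underline{I}}({M'}^*/F_\infty)$, it suffices to prove that $\mathcal{X}_{\underline{I}}(M^*/F_\infty)/p$ and $\mathcal{X}_{\underline{I}}({M'}^*/F_\infty)/p$ have the same $R$-rank.

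The next step is to pass to non-primitive Selmer groups. Let $\Sigma_0$ be the finite set of primes of $F$ not above $p$ at which $T$ (equivalently, by the congruence, $T'$) ramifies, and let $\Sel_{\underline{I}}^{\Sigma_0}(M^*/F_\infty)$ be obtained by imposing no condition at the places of $F_\infty$ above $\Sigma_0$ while retaining the signed conditions of Büyükboduk--Lei (built from the Loeffler--Zerbes maps) at $\gp$ and $\gp^c$. Dualising the exact sequence
\[
0\to\Sel_{\underline{I}}(M^*/F_\infty)\to\Sel_{\underline{I}}^{\Sigma_0}(M^*/F_\infty)\to\bigoplus_{w\mid\Sigma_0}H^1(F_{\infty,w},M^*)
\]
presents $\mathcal{X}_{\underline{I}}^{\Sigma_0}(M^*/F_\infty)$ as an extension of $\mathcal{X}_{\underline{I}}(M^*/F_\infty)$ by a quotient of $\bigoplus_w H^1(F_{\infty,w},M^*)^\vee$. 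Since $F_\infty/F$ is unramified outside $p$, each such $w$ is unramified in $F_\infty$ with procyclic decomposition subgroup in $\Omega$, and a standard local computation (local Tate duality together with the Iwasawa cohomology of the unramified $\Z_p$-extension) shows $H^1(F_{\infty,w},M^*)^\vee$ is $\Lambda$-torsion with vanishing $\mu$-invariant. By additivity of $\rank_\Lambda$ and of $\mu$ in exact sequences, $\rank_\Lambda\mathcal{X}_{\underline{I}}^{\Sigma_0}=\rank_\Lambda\mathcal{X}_{\underline{I}}$ and $\mu(\mathcal{X}_{\underline{I}}^{\Sigma_0})=\mu(\mathcal{X}_{\underline{I}})$ — for both $M$ and $M'$ — so we are reduced to showing that $\mathcal{X}_{\underline{I}}^{\Sigma_0}(M^*/F_\infty)/p$ and $\mathcal{X}_{\underline{I}}^{\Sigma_0}({M'}^*/F_\infty)/p$ have the same $R$-rank.

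The heart of the proof is the comparison modulo $p$. The exact sequence $0\to T^*/pT^*\to M^*\xrightarrow{p}M^*\to0$, together with its analogues for the local conditions at $\gp$ and $\gp^c$, identifies $\Sel_{\underline{I}}^{\Sigma_0}(M^*/F_\infty)[p]$ — up to finite error terms coming from $H^0$, which are harmless under hypothesis (Tors.) — with a signed non-primitive Selmer group built purely from the residual Galois module $T^*/pT^*$. The ambient Galois cohomology of this residual Selmer group depends only on $T^*/pT^*\cong {T'}^*/p{T'}^*$; what remains is to check that the signed local conditions at $\gp,\gp^c$, reduced modulo $p$, also depend only on this residual module. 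This is precisely where the $p$-adic Hodge-theoretic hypotheses (H.-T.), (Cryst.), (Fil) and (Slopes) are used: they force the Wach modules of $T^*$ and ${T'}^*$ to be congruent modulo $p$ and, through the explicit description of the Loeffler--Zerbes two-variable logarithm map over $F_\infty$ — which at each of $\gp,\gp^c$ is assembled from the cyclotomic signed Coleman maps by adjoining the unramified $\Z_p$-variable — they make the $\Lambda$-submodules defining the signed conditions have the same reduction modulo $p$. With this in hand $\Sel_{\underline{I}}^{\Sigma_0}(M^*/F_\infty)[p]$ and $\Sel_{\underline{I}}^{\Sigma_0}({M'}^*/F_\infty)[p]$ differ only by finite modules, so $\mathcal{X}_{\underline{I}}^{\Sigma_0}(M^*/F_\infty)/p$ and $\mathcal{X}_{\underline{I}}^{\Sigma_0}({M'}^*/F_\infty)/p$ have equal $R$-rank, and by the reductions above together with the displayed equivalence the theorem follows.

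The main obstacle is this last point: controlling the reduction modulo $p$ of the signed local conditions over the $\Z_p^2$-extension. In the cyclotomic case Ponsinet achieves this via congruences of Wach modules and Berger's description of the Coleman maps; here one must additionally carry the argument through the two-variable theory of \cite{LZ14}, tracking the unramified variable, and verify that the slope and filtration hypotheses are robust enough to guarantee that the relevant $\Lambda$-submodules reduce well modulo $p$. Secondary, and expected to be routine, are the local lemma on $H^1(F_{\infty,w},M^*)^\vee$ for $w\nmid p$ infinitely split in $F_\infty$, and the availability of the Cassels--Poitou--Tate formalism relating primitive and non-primitive Selmer groups in this two-variable setting.
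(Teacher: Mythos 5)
Your proposal is correct and follows essentially the same Greenberg--Vatsal/Ponsinet strategy as the paper: Howson's rank criterion for $\mu=0$, passage to $\Sigma_0$-non-primitive Selmer groups whose error terms are cotorsion over $\mathbb{F}_p[[\Omega]]$, and comparison of the residual signed Selmer groups for $M^*[p]\cong {M'}^*[p]$. The step you flag as the main obstacle---controlling the mod-$p$ reduction of the signed local conditions at $\gp,\gp^c$ over the $\Z_p^2$-tower---is exactly how the paper closes the argument: it uses the decomposition $H^1_{\mathrm{Iw}}(F_{\infty,\gq},T)\cong H^1_{\mathrm{Iw}}(F_{\cyc,\gq},T)\,\widehat{\otimes}\,S_{L_\infty/\Q_p}$ with the free rank-one Yager module, so that Berger's Wach-module congruence and Ponsinet's cyclotomic Coleman-map congruences at each unramified layer $L_m$ pass to the two-variable limit, yielding the isomorphism of residual non-primitive Selmer groups you require.
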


\begin{corollary}[see Corollary \ref{torsioncorollary}]
 Suppose that the above Selmer groups
are both $\Z_p[[\Omega]]$-cotorsion. Then their $\mu$-invariants either both vanish, or they both don't.
\end{corollary}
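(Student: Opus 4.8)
The plan is to derive this immediately from Theorem~\ref{them:congruence_mu}. First I would recall that, by Pontryagin duality, a $\Z_p[[\Omega]]$-module $N$ is cotorsion precisely when its Pontryagin dual $N^\vee$ is a torsion $\Z_p[[\Omega]]$-module, i.e.\ has corank $0$ as a $\Z_p[[\Omega]]$-module. Applying this to $N=\Sel_{\underline{I}}(M^*/F_\infty)$ and to $N=\Sel_{\underline{I}}({M^\prime}^*/F_\infty)$, the hypothesis that both Selmer groups are $\Z_p[[\Omega]]$-cotorsion says exactly that $\mathcal{X}_{\underline{I}}(M^*/F_\infty)$ and $\mathcal{X}_{\underline{I}}({M^\prime}^*/F_\infty)$ are both $\Z_p[[\Omega]]$-torsion, hence both have corank $0$.

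In particular $\mathcal{X}_{\underline{I}}(M^*/F_\infty)$ and $\mathcal{X}_{\underline{I}}({M^\prime}^*/F_\infty)$ have the same corank (namely $0$) as $\Z_p[[\Omega]]$-modules, so the corank hypothesis of Theorem~\ref{them:congruence_mu} is automatically satisfied. The $p$-adic Hodge-theoretic hypotheses (H.-T.), (Cryst.), (Tors.), (Fil), and (Slopes) on $T$ and $T'$ are the standing assumptions under which $\mathcal{M}$ and $\mathcal{M'}$ are taken throughout this part of the paper, so Theorem~\ref{them:congruence_mu} applies verbatim and yields that the $\mu$-invariant of $\mathcal{X}_{\underline{I}}(M^*/F_\infty)$ vanishes if and only if the $\mu$-invariant of $\mathcal{X}_{\underline{I}}({M^\prime}^*/F_\infty)$ vanishes. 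This is precisely the dichotomy asserted by the corollary.

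I do not expect any genuine obstacle here: all the real content is already packaged in Theorem~\ref{them:congruence_mu}, and the corollary is simply the clean special case in which one assumes cotorsion outright — a hypothesis that is frequently imposed in the literature (cf.\ the setup of \cite{Dion2022}) and that makes the a~priori comparison of coranks vacuous. The only point worth making explicit in the write-up is the elementary equivalence ``$\Z_p[[\Omega]]$-cotorsion $\iff$ dual has corank $0$'', after which the statement is a one-line invocation of the preceding theorem.
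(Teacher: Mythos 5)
Your proposal is correct and is essentially identical to the paper's argument: the paper's proof of Corollary~\ref{torsioncorollary} is literally ``this is the theorem with $r=0$,'' i.e.\ Theorem~\ref{them:congruence_mu} applied with both coranks equal to zero. Your only addition is making explicit the duality dictionary ``cotorsion Selmer group $\iff$ torsion (corank-$0$) Pontryagin dual,'' which the paper leaves implicit.
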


Note that the theorem above can be reinterpreted in terms of an Euler characteristic, since the $\mu$-invariant of $\mathcal{X}_{\underline{I}}(M^*/F_\infty)$ is the $p$-adic valuation of the Euler characteristic $\chi\big(\Omega, \mathcal{X}_{\underline{I}}(M^*/F_\infty)(p)\big)$ (see \cite[Corollary 1.7]{Howson_02}). 

Our original goal was to relate the $\Omega$-Euler characteristic $\chi\big(\Omega, \mathcal{X}_{\underline{I}}(M^*/F_\infty)\big)$ with the $\Gamma$-Euler characteristic $\chi\big(\Gamma, \mathcal{X}_{\underline{I}}(M^*/F_\cyc)\big)$ generalizing \cite[Theorem 5.15]{LeiSujatha}. It would be interesting if one can do this.%, but there are some obstructions that we faced; a detailed investigation is currently underway.

\subsection{Outline}
In Section \ref{sec:pre}, we  recall the preliminaries including the notion of Yager module which will be an important tool in developing the theory of the two-variable signed Coleman map, the construction of which is given in Section \ref{sec:multiselmer}. Using the signed Coleman map giving the local conditions at the primes of $F$ above $p$, the signed Selmer group $\Sel_{\underline{I}}(M^*/F_\infty)$ is constructed in Section  \ref{sec:multiselmer}. Ranks of certain Iwasawa modules, including the ranks of the images and the kernels of the signed two variable Coleman map, are analyzed in Section \ref{sec:ranks}. In Section \ref{sec:resultmain}, we prove  
 a control theorem (Theorem \ref{thm:mainKIM}) and use it to provide a sufficient condition for the signed Selmer group over the $\Z_p^2$-extension $F_\infty$ to be cotorsion (see Corollary \ref{prop:check}). Finally, in Section \ref{sec: muinvariance} we prove our theorem regarding the $\mu$-invariants under congruence (see Theorem \ref{them:congruence_mu}).

\section*{Acknowledgments.} We thank Cédric Dion, Parham Hamidi, Antonio Lei, Filippo Nuccio and  Gautier Ponsinet for carefully answering various questions.   The first author is supported by the Inspire research grant, the second author by an NSF grant (2001280).

\section{Preliminaries}\label{sec:pre}
The goal of this section is to fix some notations.
\subsection{The setup}\label{sec: setup}
Let $p$ be an odd prime and $F$ be a number field unramified at $p$. Let $\mathcal{M}$ be a motive over $F$ with coefficients in $\Q$ (see \cite{FPR94}). Let $\mathcal{M}_p$ be its $p$-adic realization and fix a $G_F$- stable $\Z_p$-lattice $T$ inside $\mathcal{M}_p$. Let $g=\dim_{\Q_p}(\mathrm{Ind}_F^{\Q}\mathcal{M}_p)$ and $g_+=\dim_{\Q_p}(\mathrm{Ind}_F^{\Q}\mathcal{M}_p)^+$ the dimension of the $+1$ eigenspace under the action of a fixed complex conjugation on $\mathrm{Ind}_F^{\Q}\mathcal{M}_p$. We write $g_{-}=g-g_+$ and $g_v=\dim_{\Q_p}(\mathrm{Ind}_{F_v}^{\Q_p}\mathcal{M}_p)$ where $v$ is a prime of $F$ above $p$. We then have $g=\sum_{v \mid p}g_v$.

\begin{remark}
Suppose $T$ is of rank $d$. Then $g_v=\rank(T)[F_v:\Q_p].$ Then $g=\sum_{v |p}g_v=\sum_{v |p}\rank(T)[F_v:\Q_p]$. If needed, one can also assume $g_+=g_-=\frac{g}{2}$ wherever necessary (see hypothesis (H.P) of \cite{LeiPonsinet2017}).
\end{remark}
Let $T^*=\Hom(T,\Z_p(1))$ be the Tate dual of $T$. We set $M=T \otimes \Q_p/\Z_p$ and $M^*=T^* \otimes \Q_p/\Z_p$. 

For every prime $v$ of $F$ above $p$, we with the hypotheses on $\mathcal{M}$ as in \cite{ponsinet}:
\\

\noindent \textbf{(H.-T.)} The Hodge--Tate weights of $\mathcal{M}_p$, as a $G_{F_v}$-representation, are in  $[0,1]$. 

\noindent \textbf{(Cryst.)} The $G_{F_v}$-representation $\mathcal{M}_p$ is crystalline.

\noindent \textbf{(Tors.)} The cohomology groups $H^0(F_v, T/pT)$ and $H^2(F_v, T/pT)$ are trivial.

We note that the hypotheses \textbf{(Cryst.)} and \textbf{(H.-T.)} are then also satisfied for $\mathcal{M}^*$, which is  the dual of $\mathcal{M}$. The hypothesis  \textbf{(Tors.)} is also satisfied by $T^*$, which is a $G_F$-stable $\Z_p$-lattice inside the $p$-adic realization $\mathcal{M}_p^*$ of $\mathcal{M}^*$.

\subsection{Dieudonné modules}
For a prime $v$ of $F$ dividing $p$, let $\Dcrisv(T)$ be the Dieudonné module associated to $T$ (see \cite[Defn. V.1.1]{berger04}). It is a free $\mathcal{O}_{F_v}$- module of rank $\dim_{\Q_p}(\mathcal{M}_p)$ and is equipped with a filtration of $\mathcal{O}_{F_v}$-modules $(\Fil^i\Dcrisv(T))_{i \in \Z}$ and a Frobenius operator $\phi$. The filtration is given by
$$
\Fil^i \mathbb{D}_{\cris,v}(T) = \begin{cases} 0 & \text{if $i \geq 1$,} \\ \mathbb{D}_{\cris,v}(T) & \text{if $i \leq -1$.} \end{cases}
$$
We set $\Dcrisv(\mathcal{M}_p):=\Dcrisv(T) \otimes \Q_p$ and this is a  Fontaine's filtered $\phi$-module associated to $\mathcal{M}_p.$
We also make the following assumptions.
\\

\noindent \textbf{(Fil.)} $\sum_{v \mid p} \dim_{\Q_p} \Fil^0\Dcrisv(T) \otimes \Q_p =g_-.$

\noindent \textbf{(Slopes)} The slopes of the Frobenius operator $\phi$ on the Dieudonné module $\Dcrisv(\mathcal{M}_p)$ lie inside $(-1,0)$. 

\subsubsection{Hodge-compatible basis}\label{sub:HC} We fix once and for all a \textit{Hodge-compatible} basis of $\Dcrisv(T)$ which is, by definition, a $\Z_p$-basis  $\{u_1,...,u_{g_v}\}$ of $\Dcrisv(T)$ such that $\{u_1,...,u_{d_v}\}$ is a basis for $\Fil^0\Dcrisv(T)$ for some $d_v$. The matrix of the Frobenius $\phi$ with respect to this basis is of the form 
$$
C_{\phi,v} = C_v\left[
\begin{array}{c|c}
I_{d_v} & 0 \\
\hline
0 & \frac{1}{p} I_{g_v-d_v}
\end{array}
\right]
$$
for some $C_v \in \mathrm{GL}_{g_v}(\Z_p)$ and where $I_n$ is the identity $n \times n$ matrix (see \cite[Section 2.2]{BL17}).
We also note that the Dieudonné module $\Dcrisv(T^*)$  satisfies the hypotheses \textbf{(Fil.)} and \textbf{(Slopes)}.

\begin{remark}
Suppose that $A$ is an abelian variety defined over $F$ having good supersingular reduction at all the primes of $F$ dividing $p$. Let $T_p(A)$ be the Tate module of $A$ and $V_p(A):=T_p(A) \tensor \Q_p$. Then the $G_F$-representation $V_p(A)$ and its Galois stable lattice $T_p(A)$ satisfy all the hypotheses \textbf{(H.-T.)}, \textbf{(Cryst.)}, \textbf{(Tors.)}, \textbf{(Fil.)} and
\textbf{(Slopes)} (cf. \cite[Example 1.1]{ponsinet}).
\end{remark} 
\subsection{The Yager module}\label{sec Yager}

Let $L/\Q_p$ be a finite unramified extension. Let $L_\infty$ be any unramified $p$-adic Lie extension of $L$ with Galois group $U$.  For $x \in \mathcal{O}_L$, define 

$$
y_{L/\Q_p}(x) := \sum_{\tau \in \Gal(L/\Qp)} \tau (x) \cdot \tau^{-1} \in \mathcal{O}_L [\Gal(L/\Q_p)]. 
$$

Let $S_{L/\Q_p}$ be the sub-$\mathcal{O}_L [\Gal(L/\Q_p)]$-module generated by the image of $y_{L/\Q_p}$ in $\mathcal{O}_L [\Gal(L/\Q_p)]$.  Then there is an isomorphism of $\Lambda_{\mathcal{O}_L}(U)$-modules (cf.  \cite[Section 3.2]{LZ14})
$$
y_{L_\infty/\Q_p}:\varprojlim_{\Q_p \subseteq L \subseteq L_\infty} \mathcal{O}_L \xrightarrow{\cong} S_{L_\infty / \Q_p}:= \varprojlim_{\Q_p \subseteq L \subseteq L_\infty} S_{L/\Q_p},
$$
where the inverse limit is taken with respect to the trace maps on the left and the projection maps $\Gal(L^\prime/\Q_p) \to \Gal(L/\Q_p)$ for $L \subseteq L^\prime$ on the right. By \cite[Proposition 3.2]{LZ14}, $\varprojlim_{\Q_p \subseteq L \subseteq L_\infty} \mathcal{O}_L$ is a free $\Lambda_{\mathcal{O}_L}(U)$-module of rank one, so that the Yager module $S_{L_\infty/\Q_p}$ is also free of rank one over $\Lambda_{\mathcal{O}_L}(U)$.
The Yager module $S_{L_\infty/\Q_p}$ comes equipped with a compact and Hausdorff topology that coincides with the subspace topology from $\Lambda_{\widehat{\mathcal{O}}_{L_\infty}}(U)$, where $\widehat{\mathcal{O}}_{L_\infty}$ is the completion of the ring of integers of $L_\infty$.

\section{The multi-signed Selmer groups over a $\Z_p^2$-tower}\label{sec:multiselmer}

In this section, we recall some known results, mainly the construction of certain Coleman maps and present a conjecture due to Büyükboduk--Lei concerning the cotorsion of a Selmer group constructed from the Coleman map in the case of a cyclotomic $\Z_p$-extension. We generalize their conjecture to the case of a $\Z_p^2$-extension.

Let $F$ be imaginary quadratic, $F_\infty$ be the compositum of all $\Z_p$-extensions of $F$, so that in particular $F_{\cyc}\subset F_\infty$, where $F_{\cyc}$ is the cyclotomic $\Z_p$-extension of $F$. We let $\Omega:= \Gal(F_\infty/F)\cong \Z_p^2$.   Suppose that the prime $p$ splits into two primes $\gp$ and $\gp^c$ of $F$, where $c$ denotes complex conjugation. We will write $\gq$ to denote either of  these two primes. If $\mathfrak{a}$ is an ideal of $\mathcal{O}_F$, let $F(\mathfrak{a})$ be the ray class field of $F$ of conductor $\mathfrak{a}$. If $n \geq 0$ is an integer, we will write $\Omega_n := \Omega^{p^n}$ and $F_n := F_\infty^{\Omega_n} = F(p^{n+1})^{\Gal(F(1)/F)}$. We assume that $F(1)\bigcap F_\infty=F$,  hence   $\Omega \cong G_{\gp}\times G_{\gp^c}$ where $G_\gq$ is the Galois group of the extension $F(\gq^\infty) \bigcap F_\infty / F$. We fix topological generators $\gamma_\gp$ and $\gamma_{\gp^c}$ respectively for the groups $G_\gp$ and $G_{\gp^c}$. Let $\Lambda(\Omega) := \Zp [[\Omega ]]  \cong \Zp [[ \gamma_\gp-1, \gamma_{\gp^c}-1]]$ be the Iwasawa algebra of $\Omega$. {More generally, for a choice of non-negative integers $\vec{n}:=(n_1, n_2); n_i\geq 0$, denote by $\Omega_{\vec{n}}$ the subgroup of $\Omega$ generated by $(\gamma_{\gp}^{n_1}, {\gamma_{\gp^c}}^{n_2})$, and let $F_{\vec{n}}:=F_\infty^{\Omega_{\vec{n}}}$.}

Write $\Omega_\gq$ for the decomposition group of $\gq$ in $\Omega$.  
% and $d>1$. (If $d=1$ then the multi-signed Selmer groups over the cyclotomic tower $F_\cyc$ are already defined in \cite{BL17}).
Let $\Sigma$ be a finite set of primes of $F$ containing the primes dividing $p$, the archimedean primes and the primes of ramification of $M^*$. Let $F_\Sigma$ be the maximal extension of $F$ that is unramified outside $\Sigma$. If $w$ is a place of $F_\infty$ above $\gq$, then by local class field theory, the Galois group $\Gal(F_{\infty,w}/F_\gq)$ is isomorphic to $\Z_p^2$. 
Hence the extension $F_{\infty,w}$ coincides with the compositum of the cyclotomic $\Z_p$-extension of $\Q_p$ and the unramified $\Z_p$-extension of $\Q_p$.

Let $L_\infty$ and $k_\text{cyc}$ be the unramified $\Z_p$-extension and the cyclotomic $\Z_p$-extension of $\Q_p$ respectively. Let $k_\infty$ be the compositum of $L_\infty$ and $k_\cyc$.  For $n \geq 0$, let $k_n$ and $L_n$ be the subextensions of $k_\cyc$ and $L_\infty$ respectively such that $[k_n:\Q_p]=p^n$ and $[L_n:\Q_p]=p^n$. We set $\Omega_p := \Gal(k_\infty/\Q_p) \cong \Z_p^2$, $\Gamma_{\text{ur}} := \Gal(L_\infty/\Q_p) \cong \Z_p$ and $\Gamma_{\text{cyc}} := \Gal(k_\text{cyc}/\Q_p) \cong \Z_p$. Thus $\Omega_p \cong \Gamma_{\mathrm{ur}} \times \Gamma_{\mathrm{cyc}}$.  For $\vec{n}=(n_1,n_2);n_i\geq 0$, on fixing two topological generators $(\gamma_1,\gamma_2)$ of $\Omega_p\cong \Z_p^2$, we will write $\Omega_{p,\vec{n}}$ to denote the subgroup of $\Omega_p$ generated by $(\gamma_1^{n_1},\gamma_2^{n_2})$.  Choosing a topological generator $\gamma_2$ of $\Gamma_\cyc$, we let $\mathcal{H}(\Gamma_\text{cyc})$ be the set of power series
$$
\sum_{n\geq 0} c_{n} \cdot (\gamma_2 -1 )^n
$$
with coefficients in $\Q_p$ such that $\sum_{n\geq 0}c_{n}X^n$ converges on the open unit disk.

Fix a Hodge-compatible basis $\{u_1,...,u_{g_\gq}\}$ of $\Dcrisq(T)$. Let $\{Y_{L_\infty/\Q_p}\}$ be a basis of the Yager module $S_{L_\infty/\Q_p}$. Then Loeffler--Zerbes \cite{LZ14} constructed a two-variable big logarithm map $$
\mathcal{L}_{T,\gq}^{\infty}: H^1_\mathrm{Iw}(k_\infty,T) \to Y_{L_\infty/\Q_p} \cdot \left( \mathcal{H}(\Gamma_\text{cyc})\widehat{\otimes}\Lambda(\Gamma_\text{ur}) \right) \otimes_{\Z_p} \Dcrisq(T).
$$
which interpolates the Bloch--Kato dual exponential map \cite[Theorem 4.15]{LZ14}.

Let $K$ be a finite unramified extension of $\Q_p$ and let $K_\mathrm{cyc}$ denote the cyclotomic $\Z_p$-extension of $K$ with  Galois group $\Gamma_{\mathrm{cyc}}$. In \cite{BL17}, the authors show the existence of one-variable Coleman maps
$$
\col_{T,K,i}:H^1_\mathrm{Iw}(K_\mathrm{cyc},T) \to \mathcal{O}_K\otimes \Lambda(\Gamma_{\mathrm{cyc}})
$$
for $1 \leq i \leq g_v$. Those maps are compatible with the corestriction maps $$H^1_\mathrm{Iw}(L_{m,\mathrm{cyc}},T)\to H^1_\mathrm{Iw}(L_{m-1,\mathrm{cyc}},T)$$ and the trace maps $\mathcal{O}_{L_m}\otimes \Lambda(\Gamma_\mathrm{cyc}) \to \mathcal{O}_{L_{m-1}}\otimes \Lambda(\Gamma_\mathrm{cyc})$, where $L_{m,\cyc}$ is the cyclotomic $\Z_p$-extension of $L_m$. We define the two-variable Coleman maps by taking the inverse limit of  $\col_{T,L_m,i}$ as $L_m$ runs through the finite extensions between $L_\infty$ and $\Q_p$. In order to get a family of maps landing in $\Lambda(\Gamma_p)$, we further compose it with the Yager module $S_{L_\infty/\Q_p}$. More precisely, the two-variable Coleman maps are defined by
\begin{align}
\col_{T,i}^\infty:H^1_\mathrm{Iw}(k_\infty,T) \cong \varprojlim_{L_m} H^1_{\mathrm{Iw}}(L_{m,cyc},T) &\to Y_{L_\infty/\Q_p}\cdot \Lambda(\Omega_p) \label{two-variable-coleman}\\
(z_m) & \mapsto (y_{L_\infty/\Q_p}\otimes 1) \circ (\varprojlim_{L_m} \col_{T,L_m,i}(z_m))
\notag.
\end{align}

 We can identify $Y_{L_\infty/\Q_p} \cdot \Lambda(\Omega_p)$ with $\Lambda(\Omega_p)$, and hence omit the basis $Y_{L_\infty/\Q_p}$ from the notation and see the Coleman maps $\col_{T,i}^\infty$ as taking value in $\Lambda(\Omega_p)$.
{By combining \cite[Theorem 2.13]{BL17} and \cite[Theorem 4.7 (1)]{LZ14}} and including the prime $\gq$ in our notation,  we have the Coleman maps
\begin{equation}\label{eq:coleman}
\col_{T,\gq,i}^\infty: H^1(K_\gq, T \otimes \Lambda(\Omega_\gq)^\iota) \rightarrow \Z_p[[\Omega_p]]
\end{equation}
for $i\in \{1,...,g_\gq\}$
 such that 
$$
\mathcal{L}_{T,\gq}^{\infty} = (u_1,\ldots, u_{g_\gq}) \cdot M_{T,\gq} \cdot \begin{bmatrix} \col_{T,\gq, 1}^\infty \\ \vdots \\ \col_{T,\gq,g_\gq}^\infty \end{bmatrix}
.$$

Here the matrix $M_{T,\gq}$ is defined in the following way.
For $n \geq 1$, as in \cite{BL17}, we can define 
$$
C_{\gq, n} = \left[
\begin{array}{c|c}
I_{d_\gq} & 0 \\
\hline
0 & \Phi_{p^n}(1+X)I_{g_\gq-d_\gq}
\end{array}
\right] C_\gq^{-1}
$$
and $$M_{\gq,n}=(C_{\phi,\gq})^{n+1} C_{\gq,n}\cdots C_1,$$
where 
$\Phi_{p^n}$ is the $p^n$-th cyclotomic polynomial and the matrices $C_{\phi,\gq}$ and $C_\gq$ are as in Section \ref{sub:HC}. By \cite[Proposition 2.5]{BL17} the sequence $(M_{\gq,n})_{n \geq 1}$ converges to some $g_\gq \times g_\gq$ logarithmic matrix $M_{T,\gq}$ with entries in $\mathcal{H}(\Gamma_{\cyc})$.

\begin{remark}
By assumption, $F_\infty \cap F(1)=F$, which happens if $p$ does not divide $h_F$, the class number of $F$. For  a prime $\gq$ of $F$ over $p$, $\gq$ does not split in $F_\cyc$ and every prime above $p$ in the anticyclotomic extension of $F$ is totally ramified. Hence the assumption  $F_\infty \cap F(1)=F$ implies that there is a \textit{unique} prime above $\gq$ in $F_\infty$. However, for the definition of the multi-signed Selmer groups given below, we do not need the assumption  $F_\infty \cap F(1)=F$.
\end{remark}
Denote by $p^t$ the number of primes above $\gp$ and $\gp^c$ in $F_\infty$.
Fix a choice of coset representatives $\gamma_1,\ldots, \gamma_{p^t}$ and $\delta_1,\ldots,\delta_{p^t}$ for $\Omega/\Omega_\gp$ and $\Omega/\Omega_{\gp^c}$ respectively.
Since $p$ splits in $F$, we can identify $F_\gq$ with $\Q_p$ and $\Omega_\gq$ with $\Omega_p$. Consider the ``semi-local" decomposition coming from Shapiro's lemma
$$
H^1(F_\gp, T\otimes \Lambda(\Omega)^\iota) = \bigoplus_{j=1}^{p^t} H^1(F_\gp, T \otimes \Lambda(\Omega_\gp)^\iota)\cdot \gamma_j \cong \bigoplus_{v | \gp}H^1_\text{Iw}(F_{\infty,v},T),
$$
where $v$ runs through the primes above $\gp$ in $F_\infty,$ and $\iota: \Lambda(\Omega) \rightarrow \Lambda(\Omega)$ is the involution obtained by sending $g \in \Omega$ to $g^{-1}$. By choosing a Hodge-compatible basis of $\mathbb{D}_{\mathrm{cris},\gp}(T)$, define the Coleman map for $T$ at $\gp$ by
\begin{align*}
\col_{T,\gp,i}^{k_\infty}: H^1(F_\gp,T \otimes \Lambda(\Omega)^\iota) &\to \Lambda(\Omega) \\
x=\sum_{j=1}^{p^t}x_j \cdot \gamma_j &\mapsto \sum_{j=1}^{p^t} \col_{T,\gp,i}^\infty(x_j)\cdot \gamma_j
\end{align*}
for all $1 \leq i \leq g_\gp$.
%\blue{Let us define $\mathcal{L}_{T,\gp}^{k_\infty}:=\sum_{j=1}^{p^t}\mathcal{L}_{T,\gp}^{\infty}\cdot \gamma_j$}

Let $\mathcal{L}_{T,\gp}^{k_\infty}=\oplus_{j=1}^{p^t}\mathcal{L}_{T,\gp}^{\infty}\cdot \gamma_j$. Define $\col_{T,\gp^c,i}^{k_\infty}$ and $\mathcal{L}_{T,\gp^c}^{k_\infty}$ in an analogous way. Let $I_\gq$ denote a subset of $\{1,\ldots,g_\gq\}$ and let 
\begin{align*}
\col_{T,I_\gq}^\infty : H^1(F_\gq,T \otimes \Lambda(\Omega)^\iota) & \to \bigoplus_{i=1}^{|I_\gq|} \Lambda(\Omega) \\
z & \mapsto (\col^{k_\infty}_{T,\gq,i}(z))_{i \in I_\gq}.
\end{align*}
Tate's local pairing induces a pairing
\begin{equation}\label{TatePairing}
\bigoplus_{w|\gq}H^1_\mathrm{Iw}(F_{\infty,w},T) \times \bigoplus_{w|\gq}H^1(F_{\infty,w},M^*) \to \Q_p/\Z_p
\end{equation}

for all places $w$ of $F_\infty$ above $\gq$. We define $H^1_{I_\gq}(F_{\infty,\gq},M^*) \subseteq \bigoplus_{v | \gq}H^1(F_{\infty,v},M^*)$ as the orthogonal complement of $\ker \col_{T,I_\gq}^\infty$ under the previous pairing.

Let $K^\prime$ be a finite extension of $\Q_p$. We define
$$H^1_{\mathrm{un}}(K^\prime, \mathcal{M}_p^*)=\ker\Big(H^1(K^\prime, \mathcal{M}_p^*) \rightarrow H^1(K_{\mathrm{ur}}, \mathcal{M}_p^*) \Big)$$ where $K_\mathrm{ur}$ is the maximal unramified extension of $K^\prime$. Let $H^1_{\mathrm{ur}}(K,M^*)$ be the image of $H^1_{\mathrm{ur}}(K,\mathcal{M}_p^*)$ under the natural map 
$$H^1_{\mathrm{ur}}(K,\mathcal{M}_p^*) \rightarrow H^1(K,M^*).$$
For an infinite extension $L$ of $\Q_p$, we define 
$$H^1_{{\mathrm{ur}}}(L,M^*) = \varinjlim_{K^\prime}H^1_{\mathrm{ur}}(K^\prime, M^*)$$
where $K^\prime$ runs through all the finite subextensions of $L$ and the limit is taken with respect to restriction maps.

Let $\Sigma^\prime$ be the set of primes of $F_\infty$ above $\Sigma$. 
Let $\mathcal{I}_p$ be the set of tuples $\underline{I}=(I_\gp, I_{\gp^c})$ such that $I_\gq $ is a subset of $\{1,...,g_\gq\}$ such that $|I_\gp|+|I_{\gp^c}|=g_{-}$.

\begin{definition}
For any $\underline{I}$ as above, set

$$\mathcal{P}_{\Sigma,\underline{I}}(M^*/F_\infty)=\prod_{w \in \Sigma^\prime, w \nmid p}\frac{H^1(F_{\infty,w}, M^*)}{H^1_{\mathrm{ur}}(F_{\infty,w}, M^*)} \times \prod_{\gq \mid p}\frac{\bigoplus_{w \mid \gq}H^1(F_{\infty,w}, M^*)}{H^1_{I_\gq}(F_{\infty,\gq},M^*)}.$$

Then the multi-signed Selmer group over the $\Z_p^2$-extension $F_\infty$ is defined as 
$$\Sel_{\underline{I}}(M^*/F_\infty)=\ker\Big(H^1(F_\Sigma/F_\infty, M^*) \rightarrow \mathcal{P}_{\Sigma,\underline{I}}(M^*/F_\infty)\Big).$$
\end{definition}

Our construction generalizes the construction of the multi-signed Selmer groups for the cyclotomic $\Z_p$-extension $F_\cyc$ given in \cite[Section 1.5 and Section 1.6]{ponsinet} (originally constructed by Büyükboduk--Lei \cite{BL17}). For a prime $\gq$ of $F$ above $p$,  we have the signed Coleman map for the cyclotomic extension 
$$\col_{T,I_\gq}^\cyc: \HIw(F_\gq,T) \rightarrow \bigoplus_{i=1}^{|I_\gq|}\Z_p[[\Gal(F_\cyc/F)]]$$ which give the local condition $H^1_{I_\gq}(F_{\cyc,\gq},M^*)$. This enables the author to define the signed Selmer group $\Sel_{\underline{I}}(M^*/F_\cyc)$ (see \cite[Definition 1.7]{ponsinet}) 
(Ponsinet uses $F_\infty$ to denote the cyclotomic $\Z_p$-extension and uses $\col_{T,I_\gq}$ in place of $\col_{T,I_\gq}^\cyc$; for convenience in this article we have changed the notations of Ponsinet a bit.) 

We can also define the signed Selmer groups at finite layers. We now assume that 
$F_\infty \cap F(1)=F$ and hence, by abuse of notation, we will let $\gq$ denote the unique prime at $F_\infty$ over the prime $\gq\in \{\gp,\gp^c\}$ of $F$.

We define $H^1_{I_\gq}(F_{\vec{n},\gq},M^*):=H^1_{I_\gq}(F_{\infty, \gq},M^*)^{\Omega_{\vec{n}}}$, just as in \cite[Definition 1.5]{ponsinet}.  Defining the local condition at a finite layer by using the local condition at an infinite tower and then descending to the finite level is also crucially exploited in the work of Burungale--Büyükboduk--Lei (see \cite[Remark 4.9]{BBL2022}). This approach also appears in many other works of Lei, for example see \cite[Definition 5.2 and Remark 5.4]{Lei2023}. We will take this approach since it will have several advantages as we will see in the control theorem later (see Theorem \ref{thm:mainKIM}).
Having defined the analogous local condition $\mathcal{P}_{\Sigma,\underline{I}}(M^*/F_{\vec{n}})$ as before using  $H^1_{I_\gq}(F_{\vec{n},\gq},M^*)$, we can define the Selmer group 

$$\Sel_{\underline{I}}(M^*/F_{\vec{n}}):=\ker\Big(H^1(F_\Sigma/F_{\vec{n}}, M^*) \rightarrow \mathcal{P}_{\Sigma,\underline{I}}(M^*/F_{\vec{n}})\Big).$$

Recall that $\vec{n}=(n_1,n_2)$ and if $n_1=n_2=n$, then we will simply write $\vec{n}$ as $n$. 
 Note that, since 
$H^1_{I_\gq}(F_{\infty,\gq},M^*)$ is a discrete $\Omega_p$-module, by \cite[Proposition 1.1.8]{Neukirch}, 
$$H^1_{I_\gq}(F_{\infty,\gq},M^*)=\varinjlim_n H^1_{I_\gq}(F_{\infty,\gq},M^*)^{\Omega_{p,n}}=\varinjlim_n H^1_{I_\gq}(F_{n,\gq},M^*).$$ 
Since taking direct limits is an exact functor, this gives
$$\varinjlim_n \frac{H^1(F_{n,\gq},M^*)}{H^1_{I_\gq}(F_{n,\gq},M^*)}\cong \frac{\varinjlim_n H^1(F_{n,\gq},M^*)}{\varinjlim_n H^1_{I_\gq}(F_{n,\gq},M^*)}\cong \frac{H^1(F_{\infty,\gq},M^*)}{H^1_{I_\gq}(F_{\infty,\gq},M^*)}.$$
Furthermore, $\varinjlim_n H^1_{\mathrm{ur}}(F_{n,v},M^*)=H^1_{\mathrm{ur}}(F_{\infty,v},M^*)$ when $v \nmid p$. Hence we obtain the following compatibility  between multi-signed Selmer groups at finite levels $F_n$ and the infinite $\Z_p^2$-extension $F_\infty$. We have 
$$\Sel_{\underline{I}}(M^*/F_\infty)\cong \varinjlim_n \Sel_{\underline{I}}(M^*/F_n)$$
as $\Lambda[[\Omega]]$-modules.

The following conjecture has been made by Büyükboduk and Lei \cite[Remark 3.57]{BL17}.
\begin{conj}\label{conj1}
 For any $\underline{I} \in \mathcal{I}_p$, the signed Selmer group $\Sel_{\underline{I}}(M^*/F_\cyc)$ is cotorsion over the Iwasawa algebra $\Z_p[[\Gal(F_\cyc/F)]]$.
\end{conj}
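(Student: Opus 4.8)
We sketch a possible strategy. Since this is a conjecture, no unconditional proof for a general motive is to be expected; the plan is to reduce it, via a Poitou--Tate corank count, to the existence of an Euler system for $\mathcal{M}$ together with an explicit reciprocity law — mirroring the way Kobayashi deduces the cotorsion of $\Sel^\pm(E/\Q_\cyc)$ from Kato's Euler system. A weaker but unconditionally available implication, the cyclotomic analogue of our Corollary \ref{prop:check}, is ``$\Sel_{\mathrm{BK}}(M^*/F)$ finite $\Rightarrow$ the conjecture'', and I describe that at the end.

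\emph{Corank bookkeeping.} First I would feed the signed local conditions $H^1_{I_\gq}(F_{\cyc,\gq},M^*)$ into the Poitou--Tate exact sequence over $F_\cyc$ and dualise. Using the weak Leopoldt conjecture for $F_\cyc/F$ (a theorem) to kill the $\Lambda(\Gamma)$-corank of $H^2(F_\Sigma/F_\cyc,M^*)$, and hypothesis \textbf{(Tors.)} to kill those of $H^0(F_\Sigma/F_\cyc,M^*)$ and of the local $H^2$-terms, the sequence collapses to the corank identity
$$\operatorname{corank}_{\Lambda(\Gamma)}\Sel_{\underline{I}}(M^*/F_\cyc)=\operatorname{rank}_{\Lambda(\Gamma)}\Sel^{\mathrm{Iw}}_{\underline I}(T/F_\cyc),$$
where $\Sel^{\mathrm{Iw}}_{\underline I}(T/F_\cyc)\subseteq H^1_{\mathrm{Iw}}(F_\cyc,T)$ is the compact Selmer group cut out at $\gp$ and $\gp^c$ by the orthogonal complements $\ker\col_{T,I_\gq}^\cyc$ of the $H^1_{I_\gq}$. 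Thus the conjecture is equivalent to the assertion that $\Sel^{\mathrm{Iw}}_{\underline I}(T/F_\cyc)$ is $\Lambda(\Gamma)$-torsion.

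\emph{The Euler system input.} By the cyclotomic analogue of the rank computations of Section \ref{sec:ranks}, $H^1_{\mathrm{Iw}}(F_\cyc,T)$ has $\Lambda(\Gamma)$-rank $g_+$, and $\ker\col_{T,I_\gq}^\cyc$ has rank $g_\gq-|I_\gq|$; these latter sum to $g-g_-=g_+$ precisely because $|I_\gp|+|I_{\gp^c}|=g_-$ in the definition of $\mathcal{I}_p$. Hence $\Sel^{\mathrm{Iw}}_{\underline I}(T/F_\cyc)$ is torsion if and only if the map $z\mapsto\bigl(\col^\cyc_{T,I_\gp}(\loc_\gp z),\col^\cyc_{T,I_{\gp^c}}(\loc_{\gp^c}z)\bigr)$ from $H^1_{\mathrm{Iw}}(F_\cyc,T)$ to $\Lambda(\Gamma)^{|I_\gp|}\oplus\Lambda(\Gamma)^{|I_{\gp^c}|}$ is injective up to $\Lambda(\Gamma)$-torsion (both sides have rank $g_+$). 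For this it suffices to produce a $\Lambda(\Gamma)$-submodule of $H^1_{\mathrm{Iw}}(F_\cyc,T)$ of full rank $g_+$ whose image is non-torsion — i.e.\ an Euler system for $T$ whose signed Coleman images are non-zero ``signed'' $p$-adic $L$-functions — since then no non-torsion class can satisfy all the signed local conditions at once.

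\emph{The obstacle, and the fallback.} The genuine difficulty is the last step: for $\mathcal{M}$ attached to a modular form one would use Kato's Euler system together with the explicit reciprocity law of Loeffler--Zerbes/Büyükboduk--Lei computing $\col^\cyc_{T,I_\gq}$ of Kato's classes, and then a non-vanishing statement in the spirit of Rohrlich for the signed $p$-adic $L$-function; for abelian varieties of $\mathrm{GL}_2$-type over $F$ one would use Beilinson--Flach classes; but for a general motive none of these ingredients is known, which is exactly why the statement is only a conjecture. The fallback, which does hold in the stated generality, is obtained by transposing our Corollary \ref{prop:check} to the cyclotomic tower: one first establishes the base-level case of the cyclotomic control theorem (the one-variable analogue of Theorem \ref{thm:mainKIM}), namely that the restriction map $\Sel_{\underline{I}}(M^*/F)\to\Sel_{\underline{I}}(M^*/F_\cyc)^{\Gamma}$ has finite kernel and cokernel — the global part by the usual inflation--restriction/snake-lemma argument, with \textbf{(Tors.)} bounding the error, and with no error at $\gp,\gp^c$ because the base-level local condition is \emph{defined} by descent from $H^1_{I_\gq}(F_{\cyc,\gq},M^*)$ — and then notes that if $\Sel_{\mathrm{BK}}(M^*/F)$ is finite (hence so is $\Sel_{\underline{I}}(M^*/F)$, after comparing the signed and Bloch--Kato local conditions at $\gp,\gp^c$, which agree up to finite index under \textbf{(Fil.)}, \textbf{(H.-T.)}, \textbf{(Cryst.)}), then $\Sel_{\underline{I}}(M^*/F_\cyc)^{\Gamma}$ is finite, whence its Pontryagin dual is finitely generated and torsion over $\Lambda(\Gamma)$ by Nakayama's lemma.
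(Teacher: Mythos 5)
The statement you were asked to prove is not proved in the paper at all: it is stated as a conjecture (due to B\"uy\"ukboduk--Lei, \cite[Remark 3.57]{BL17}), and the paper only records evidence for it in special cases (Kobayashi's $\pm$-Selmer groups for supersingular elliptic curves with $a_p=0$, the chromatic Selmer groups of \cite{sprung09}, and the non-ordinary eigenform case of \cite{leiloefflerzerbes10}). So there is no proof in the paper to compare against, and your proposal --- candidly --- is not a proof either. Your Poitou--Tate corank bookkeeping is the standard and correct reduction (under weak Leopoldt and \textbf{(Tors.)}, cotorsion of $\Sel_{\underline{I}}(M^*/F_\cyc)$ is equivalent to the compact signed Selmer group being $\Lambda(\Gamma)$-torsion, and the ranks $g_+$ on both sides match precisely because $|I_\gp|+|I_{\gp^c}|=g_-$), but the step that would actually establish Conjecture \ref{conj1} --- producing, for a general non-ordinary motive, a full-rank family of Iwasawa classes whose signed Coleman images are non-torsion (an Euler system plus explicit reciprocity plus non-vanishing of the signed $p$-adic $L$-functions) --- is exactly the open input, available only for modular forms/Kato classes and some $\mathrm{GL}_2$-type situations. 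Naming the missing ingredient is not the same as supplying it, so the conjecture remains unproved by your argument.

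Your fallback is likewise only a conditional statement, not the conjecture: ``$\Sel_{\mathrm{BK}}(M^*/F)$ finite $\Rightarrow$ cotorsion over $F_\cyc$'' is the one-variable counterpart of Corollary \ref{prop:check} and is essentially already contained in the cyclotomic framework of \cite{BL17} and \cite{ponsinet}, so it cannot be presented as a proof of Conjecture \ref{conj1}, whose whole point is to assert cotorsion unconditionally. One technical caveat in that fallback: the control argument you invoke is proved (here as Theorem \ref{thm:mainKIM}, and in Ponsinet's cyclotomic version) only for all but finitely many twists $s$, because at primes $v\nmid p$ one needs $M_s^{G_{F_{\vec{n}},v}}$ finite to bound $\ker(g)$; at $s=0$ this can fail, so you cannot simply assert finite kernel and cokernel for the untwisted restriction map $\Sel_{\underline{I}}(M^*/F)\to\Sel_{\underline{I}}(M^*/F_\cyc)^{\Gamma}$ without either twisting and descending carefully or adding a hypothesis at the bad primes. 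As written, then, your proposal is a reasonable research programme and a correct account of why the statement is believed, but it neither closes the Euler-system gap nor upgrades the conditional fallback to the unconditional assertion of the conjecture.
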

\begin{remark}
When the base field $F$ is $\Q$ and $\mathcal{M}$ is the Tate module of a supersingular elliptic curve with $a_p=0$, the signed Selmer groups $\Sel_{\underline{I}}(M^*/F_\cyc)$  for $\underline{I} \in \mathcal{I}_p$ coincides with Kobayashi's plus and minus Selmer groups (see \cite[Appendix 4]{BL17}). Kobayashi's plus and minus Selmer groups are known to be cortorsion \cite{kobayashi03} and hence  conjecture \ref{conj1} holds. Moreover, conjecture \ref{conj1} also holds for at least one of the chromatic Selmer groups in the case of $p$-supersingular elliptic curves with $a_p \neq 0$ and eigenforms which are non-ordinary at $p$. (see  \cite[Proposition 6.14, Theorem 7.14]{sprung09} and \cite[Theorem 6.5]{leiloefflerzerbes10}).
\end{remark}

Generalizing Conjecture \ref{conj1} above in the case of $\Z_p^2$-extension seems reasonable to make. 
\begin{conj}\label{conj:tors}
For any $\underline{I} \in \mathcal{I}_p$, the Selmer group $\Sel_{\underline{I}}(M^*/F_\infty)$ is $\Lambda(\Omega)$-cotorsion.
\end{conj}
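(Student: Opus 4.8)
The plan is to deduce the conjecture from a control theorem, conditionally on the finiteness of the Bloch--Kato Selmer group $\Sel_{\mathrm{BK}}(M^*/F)$; this is the content of Corollary~\ref{prop:check}, and I expect that removing this hypothesis for an arbitrary $\underline{I}\in\mathcal{I}_p$ is the real obstacle (discussed at the end). By the twisting formalism of Section~\ref{sec:twisted} together with Poitou--Tate duality, $\Lambda(\Omega)$-cotorsion of $\Sel_{\underline{I}}(M^*/F_\infty)$ is equivalent to $\Lambda(\Omega)$-cotorsion of the twisted Selmer group $\Sel_{\underline{I}^c}(M_s/F_\infty)$ for a suitable $s$ and the complementary index $\underline{I}^c$ (cf.\ Remark~\ref{rem:mithu}), so it suffices to work with the latter.

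First I would record that $\Sel_{\underline{I}^c}(M_s/F_\infty)\hookrightarrow H^1(F_\Sigma/F_\infty,M_s)$, whose Pontryagin dual is finitely generated over $\Lambda(\Omega)$ by Nakayama, \textbf{(Tors.)} ensuring that $H^1(F_\Sigma/F,M_s[p])$ is finite. Hence the dual $\mathcal{X}_{\underline{I}^c}(M_s/F_\infty)$ is finitely generated, and it is $\Lambda(\Omega)$-torsion precisely when its generic rank is $0$. I would phrase the criterion concretely: a finitely generated $\Lambda(\Z_p^2)$-module of positive generic rank $r$ cannot have $\Omega_{\vec{n}}$-coinvariants that remain finite as $\vec{n}\to\infty$, since the $\Z_p$-corank of the coinvariants grows at least like $r\,p^{n_1}p^{n_2}$ minus a lower-order correction coming from the $\Lambda$-torsion part; so it is enough to show those coinvariants are finite (indeed of bounded order) for all $\vec{n}$.

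Second --- the technical heart --- I would prove the control theorem, Theorem~\ref{thm:mainKIM}: for all but finitely many $s$, the restriction maps $\rho_{\vec{n}}\colon \Sel_{\underline{I}^c}(M_s/F_{\vec{n}})\to \Sel_{\underline{I}^c}(M_s/F_\infty)^{\Omega_{\vec{n}}}$ have kernels and cokernels bounded uniformly in $\vec{n}$. The kernel is governed by $H^1(\Omega_{\vec{n}},H^0(F_\infty,M_s))$, which is bounded because $H^0(F_\infty,M_s)$ is finite (a standard consequence of the running hypotheses). For the cokernel one applies the snake lemma to the defining sequences, place by place over $\Sigma'$: the global contribution is controlled by Poitou--Tate together with \textbf{(Tors.)}; at the finitely many $w\nmid p$ the quotients $H^1(F_{\infty,w},M_s)/H^1_{\mathrm{ur}}$ give bounded error since $F_\infty/F$ is unramified outside $p$ and the decomposition groups there are procyclic; and the delicate point is at $\gq\mid p$, where one must check that the transition map $H^1_{I_\gq}(F_{\vec{n},\gq},M_s)\to \bigl(H^1(F_{\infty,\gq},M_s)/H^1_{I_\gq}(F_{\infty,\gq},M_s)\bigr)^{\Omega_{\vec{n}}}$ has bounded kernel and cokernel. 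Here the choice --- made deliberately in the excerpt --- to \emph{define} the finite-layer local condition by descent from the $\Z_p^2$-level renders this comparison essentially formal, and the rank and (co)torsion statements of Section~\ref{sec:ranks} for the images and kernels of the two-variable signed Coleman maps $\col_{T,\gq,i}^\infty$, together with the structure of the Iwasawa cohomology $H^1(F_\gq,T\otimes\Lambda(\Omega_\gq)^\iota)$, are exactly what shows that specialising these $\Lambda(\Omega_p)$-modules along $\Omega_{\vec{n}}$ costs only a bounded defect.

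Third, granting Theorem~\ref{thm:mainKIM}: finiteness of $\Sel_{\mathrm{BK}}(M^*/F)$ forces $\Sel_{\underline{I}^c}(M_s/F)$ to be finite for each relevant $s$, so by the control theorem $\Sel_{\underline{I}^c}(M_s/F_{\vec{n}})$ is finite of order bounded in $\vec{n}$; dualising, $\mathcal{X}_{\underline{I}^c}(M_s/F_\infty)$ has finite, uniformly bounded $\Omega_{\vec{n}}$-coinvariants, hence is $\Lambda(\Omega)$-torsion, which via the twisting reduction above gives Conjecture~\ref{conj:tors} for $\underline{I}$. The main obstacle is the conditional nature of this route: there is no purely algebraic device forcing the generic rank to vanish without a Bloch--Kato-type input --- even for supersingular elliptic curves over an imaginary quadratic $F$ only the ``same-sign'' Selmer groups $\Sel^{+,+}$ and $\Sel^{-,-}$ are known to be cotorsion --- so the unconditional statement for every $\underline{I}$ seems to need an analytic ingredient (a nonzero two-variable signed $p$-adic $L$-function via a main conjecture) or a genuinely new idea.
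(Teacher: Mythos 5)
The statement you were asked to prove is, in the paper, only a conjecture (Conjecture \ref{conj:tors}): the paper offers no proof of it, just partial evidence in low-dimensional cases and the conditional Corollary \ref{prop:check}. Your proposal, as you yourself say explicitly, does not prove it either. What you outline is essentially the paper's own conditional route: the Kim-type control theorem (Theorem \ref{thm:mainKIM}), the identification $\Sel_{\mathrm{BK}}(M^*/F)=\Sel_{\underline{I}}(M^*/F)$, finiteness of the $\Omega$-invariants of the Selmer group over $F_\infty$, and Nakayama's lemma to conclude that the dual is $\Lambda(\Omega)$-torsion. The genuinely open content of Conjecture \ref{conj:tors} --- removing the finiteness hypothesis on $\Sel_{\mathrm{BK}}(M^*/F)$ for an arbitrary $\underline{I}\in\mathcal{I}_p$ --- is left untouched, and your closing assessment that this requires analytic input (a nonvanishing multi-signed $p$-adic $L$-function or a main conjecture) rather than further algebra is consistent with the state of the art recorded in the paper.

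Within the conditional part, one step should not be treated as formal: your opening reduction, via ``twisting and Poitou--Tate duality,'' from cotorsion of $\Sel_{\underline{I}}(M^*/F_\infty)$ to cotorsion of $\Sel_{\underline{I}^c}(M_s/F_\infty)$. In this generality the equivalence between torsionness of $\mathcal{X}_{\underline{I}}(M^*/F_\infty)$ and of $\mathcal{X}_{\underline{I}^c}(M/F_\infty)$ is not known; the paper records it only for polarized abelian varieties, citing \cite[Corollary 5.13]{Dion2022}. The paper sidesteps this by running the control-theorem-plus-Nakayama argument directly for the Selmer group attached to $M^*$ (the hypotheses \textbf{(H.-T.)}, \textbf{(Cryst.)}, \textbf{(Tors.)}, \textbf{(Fil.)}, \textbf{(Slopes)} being symmetric in $T$ and $T^*$, with $\underline{I}$ and $\underline{I}^c$ exchanged), so no duality between the two Selmer groups is needed. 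A minor point in the same direction: under \textbf{(Tors.)} the paper obtains $\ker(\rho_{\vec{n}})=0$ exactly, since $H^0(F_{\infty,\gq},M_s)=0$, rather than the bounded kernel you extract from $H^1(\Omega_{\vec{n}},H^0(F_\infty,M_s))$; this does not affect the argument, but the sharper vanishing is what the snake-lemma analysis in Theorem \ref{thm:mainKIM} actually uses.
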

\begin{remark}
Suppose that $\mathcal{M}$ is the Tate module of a supersingular elliptic curve. Then the two-variable Coleman maps we defined over the $\Z_p^2$-extension $F_\infty$ are the same as as the $\sharp/\flat$-Coleman maps in \cite[Section 5]{LeiSprung2020}.

For $\bullet,\circ \in \{\sharp,\flat\}$, one can construct  signed $p$-adic $L$-functions $\mathfrak{L}_p^{\bullet,\circ}(E/F)$. Under the hypothesis  that $\mathfrak{L}_p^{\bullet,\circ}(E/F)$ is nonzero,  \cite[Proposition 2.23]{sprung16} or \cite[Theorem 3.7]{castella2018iwasawa} shows that the signed Selmer group $\Sel^{\bullet,\circ}(E/F_\infty)$ is $\Lambda(\Omega)$-cotorsion. The $\sharp/\flat$-Selmer groups generalise Kim's $\pm/\pm$-Selmer groups \cite{kimdoublysigned}, who assumed $a_p=0$. The Selmer groups for $\underline{I}=(1,1)$ and $\underline{I}=(2,2)$ correspond to $+/+$ and $-/-$ Selmer groups and these are known to be cotorsion (see \cite[Remark 8.5]{LeiPal19}) giving partial results toward conjecture \ref{conj:tors}.
\end{remark}
\begin{remark}\label{rem:mithu}
Recall that $\mathcal{I}_p$ was the set of tuples $\underline{I}=(I_\gp, I_{\gp^c})$ such that $I_\gq $ is a subset of $\{1,...,g_\gq\}$ for $\gq \in \{\gp, \gp^c\}$ satisfying $|I_\gp|+|I_{\gp^c}|=g_{-}$. Denote by $I_\gq^c$ the complement of the subset $I_\gq$. Then $\underline{I}^c=(I_\gp^c, I_{\gp^c}^c)$ satisfies $|I_\gp^c|+|I_{\gp^c}^c|=g-g_{-}=g_+$. One can also define Coleman maps $\col_{T^*,I_\gq}^\infty$ and construct the signed Selmer groups $\Sel_{\underline{I}^c}(M/F_\infty)$ for $M$ (for details, see \cite[Lemma 3.14]{Dion2022}).
Also note that Conjecture \ref{conj:tors} is expected to hold for $\Sel_{\underline{I}^c}(M/F_\infty)$. Under certain conditions, for supersingular abelian varities of $GL(2)$-type, there is also a functional equation relating the characteristic ideal of the dual of  $\Sel_{\underline{I}}(M^*/F_\infty)$ with that of $\Sel_{\underline{I}^c}(M/F_\infty)$ (see \cite[Theorem A]{Dion2022}).
\end{remark}

\begin{comment}
Let $K$ be an imaginary quadratic field and denote by $K_\infty$ the compositum of all $\Zp$-extensions of $K$, so that in particular $K_{cyc}\subset K_\infty$, where $K_{cyc}$ is the cyclotomic $\Zp$-extensions. We let $\Omega:= \Gal(K_\infty/K)\cong \Zp^2$. According to Leopoldt's conjecture, we have $d=1+\#\text{pairs of complex embeddings of $K$}$. We write $\Lambda(\Omega)$ for the Iwasawa algebra over $\Zp$, i.e. $\Zp[[\Omega]]=\varprojlim_{\text{$U$ open in $\Omega$}} \Zp[\Omega/U]$. Let $v$ be a place of K and $w$ a place of $K_\infty$ above $v$.

Recall that for $v|p$, $K_v\cong \Qp$ and $K_{\infty,w}/K_v$ is an abelian pro-$p$ extensions. By local class field theory, we have:

$$\Gal(K_{\infty,w}/K_v)\cong \begin{cases}\Zp \text{ if $d=1$} \\\Zp^2 \text{ if $d>1$.}\end{cases}$$

Thus, $K_{\infty,w}$ is either the cyclotomic $\Zp$-extension $K_v^{cyc}$ (d=1) or the compositum of the unramified $\Zp$-extension $K_v^{ur}$ of $K_v$ with $K_v^{cyc}$.

When $v\nmid p$, we have $K_{\infty,w}=K_v^{ur}$.
\end{comment}

\subsection{Twisted Selmer groups}\label{sec:twisted}
We follow the notations as in \cite[discussion after Remark 1.10]{ponsinet}. For $s \in \Z$, set $M_s^*=M^* \otimes \chi^s$ where $\chi$ is the cyclotomic character. Note that the cyclotomic character factors through the cyclotomic extension of $F$.  We have $M_s^*=M^*$ as a $\Gal(\overline{F}/F_\infty)$-module and hence $H^1(F_\infty,M_s^*)=H^1(F_\infty,M^*) \otimes \chi^s$. 

For every prime $v$ of $F$, we also have $H^1(F_{v,\infty},M_s^*)=H^1(F_{v,\infty},M^*) \otimes \chi^s$ and $H^0(F_{v,\infty},M_s^*)=0$. At any prime $\gq$ of $F$ dividing $p$,  we set 
$$H^1_{I_\gq}(F_{\infty,\gq},M_s^*)=H^1_{I_\gq}(F_{\infty,\gq},M^*) \otimes \chi^s.$$ We can now define multi-signed Selmer groups with these twisted local conditions. As in \cite[p. 1643]{ponsinet}, we also note that $\Sel_{\underline{I}}(M_s^*/F_\infty) \cong \Sel_{\underline{I}}(M^*/F_\infty)\otimes \chi^s$ as $\Lambda[[\Omega]]$-modules. 

\section{Ranks of Iwasawa modules}\label{sec:ranks}
Recall that $\gq \in \{\gp,\gp^c\}$, put $H=\Gal(F_\infty/F_\cyc)$, and $\Gamma=\Omega/H=\Gal(F_\cyc/F)$.

\begin{proposition}\label{prop:rankIwasawa}
For $\gq \in \{\gp,\gp^c\}$, we have \begin{enumerate}
    \item $\rank_{\Z_p[[\Gamma]]}H^1_{\mathrm{Iw}}(F_{\cyc,\gq},T)=g_\gq$
    
\item The torsion sub-$\Z_p[[\Gamma]]$-module of $H^1_{\mathrm{Iw}}(F_{\cyc,\gq},T)$ is isomorphic to $T^{G_{F_\cyc}}$.
\item $\rank_{\Z_p[[\Omega_p]]}H^1_{\mathrm{Iw}}(F_{\infty,\gq},T)=g_\gq$
\end{enumerate}
\end{proposition}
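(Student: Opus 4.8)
The plan is to obtain (1) and (3) from a single Euler-characteristic computation and (2) from Hochschild--Serre descent along the cyclotomic tower. Since $p$ splits in $F$ we identify $F_\gq\cong\Qp$; then $F_{\cyc,\gq}$ (resp. $F_{\infty,\gq}$) is the cyclotomic $\Zp$-extension (resp. the compositum of the cyclotomic and the unramified $\Zp$-extensions) of $\Qp$, with Galois group $\Gamma_\gq\cong\Zp$ (resp. $\Omega_\gq\cong\Zp^2$, which we identify with $\Omega_p$). Write $F_{*,\gq}$ for either of these. By Shapiro's lemma at each finite layer and passage to the inverse limit, $H^i_{\mathrm{Iw}}(F_{*,\gq},T)\cong H^i(F_\gq,\mathbb{T})$, where $\mathbb{T}=T\otimes_{\Zp}\Lambda^\iota$ and $\Lambda=\Zp[[\Gamma_\gq]]$ (resp. $\Zp[[\Omega_\gq]]$); in particular $\mathbb{T}$ is free over $\Lambda$ of rank $\rank_{\Zp}T$ (see \cite{LZ14} and the references there).

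For (1) and (3), first I would record that $H^0_{\mathrm{Iw}}(F_{*,\gq},T)=0$ — the tower contains the cyclotomic $\Zp$-extension, along which the corestriction maps on $T^{G_{F_{n,\gq}}}$ are eventually multiplication by a unit times $p$ (and in any case \textbf{(Tors.)} forces $T^{G_{F_{\cyc,\gq}}}=0$) — and that $H^2_{\mathrm{Iw}}(F_{*,\gq},T)$, being by local Tate duality at each layer the Pontryagin dual of $(M^*)^{G_{F_{*,\gq}}}$, is finitely generated over $\Zp$ and hence $\Lambda$-torsion. The $\Lambda$-adic local Euler-characteristic formula (obtained by passing to the limit from the finite-level formulas, or as the Euler characteristic of the perfect complex $R\Gamma(F_\gq,\mathbb{T})$ together with its invariance under base change to $\Zp$) then gives $\rank_\Lambda H^0-\rank_\Lambda H^1+\rank_\Lambda H^2=-[F_\gq:\Qp]\rank_{\Zp}(T)$, whence $\rank_\Lambda\HIw(F_{*,\gq},T)=[F_\gq:\Qp]\rank_{\Zp}(T)=g_\gq$. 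Taking $\Lambda=\Zp[[\Gamma_\gq]]=\Zp[[\Gamma]]$ (note that $\gq$ does not split in $F_\cyc$, so its decomposition group is all of $\Gamma$) gives (1), and $\Lambda=\Zp[[\Omega_\gq]]=\Zp[[\Omega_p]]$ gives (3).

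For (2), I work over the cyclotomic tower. For each $n$, Hochschild--Serre for $1\to G_{F_{\cyc,\gq}}\to G_{F_{n,\gq}}\to\Gamma_n\to1$, together with $\cd_p(\Gamma_n)=1$ (here $\Gamma_n:=\Gal(F_{\cyc,\gq}/F_{n,\gq})\cong\Zp$), gives $0\to T^{G_{F_{\cyc,\gq}}}/(\gamma^{p^n}-1)\to H^1(F_{n,\gq},T)\to H^1(F_{\cyc,\gq},T)^{\Gamma_n}\to0$, where $\gamma$ is a topological generator of $\Gamma_\gq$. Passing to the inverse limit along corestriction (the left-hand system is surjective, so its first derived inverse limit vanishes) yields a short exact sequence $0\to\varprojlim_n T^{G_{F_{\cyc,\gq}}}/(\gamma^{p^n}-1)\to\HIw(F_{\cyc,\gq},T)\to\varprojlim_n H^1(F_{\cyc,\gq},T)^{\Gamma_n}\to0$. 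The left-hand term is canonically isomorphic to $T^{G_{F_{\cyc,\gq}}}$ — the natural map is an isomorphism because $\Gamma_\gq$ acts through a pro-$p$ subgroup of the $\Zp$-automorphism group of $T^{G_{F_{\cyc,\gq}}}$, which forces $\bigcap_n(\gamma^{p^n}-1)T^{G_{F_{\cyc,\gq}}}=0$, plus a compactness argument — and it is $\Lambda$-torsion. The right-hand term is $\Lambda$-torsion-free. Hence the left-hand term is exactly the $\Lambda$-torsion submodule of $\HIw(F_{\cyc,\gq},T)$, which is (2). (In particular, under \textbf{(Tors.)} this submodule vanishes, so $\HIw(F_{\cyc,\gq},T)$ is $\Zp[[\Gamma]]$-torsion-free.)

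The step I expect to be the main obstacle is the $\Lambda$-torsion-freeness of $\varprojlim_n H^1(F_{\cyc,\gq},T)^{\Gamma_n}$ used in (2): this is the one point where the cyclotomic nature of the tower is genuinely needed (the norm maps produce no new torsion in the limit), and I would import it from \cite{LZ14} or from Perrin-Riou's local Iwasawa theory. A secondary point is setting up the $\Lambda$-adic Euler-characteristic formalism correctly over the two-variable ring $\Zp[[\Omega_p]]$ for (3); alternatively one may reduce (3) to (1) by a flat base change along $\Zp[[\Gamma]]\hookrightarrow\Zp[[\Omega_p]]$ using the two-variable Iwasawa-cohomology results of \cite{LZ14}.
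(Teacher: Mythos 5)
Your argument is correct, but it takes a genuinely different route from the paper. For (1) and (2) the paper simply cites Perrin-Riou \cite[A.2]{PR95}, and for (3) it invokes the Wach-module computation of B\"uy\"ukboduk--Lei to get the structural isomorphism $H^1_{\mathrm{Iw}}(F_{\infty,\gq},T)\cong H^1_{\mathrm{Iw}}(F_{\cyc,\gq},T)\,\widehat{\otimes}\,S_{L_\infty/\Q_p}$, so that (3) follows from (1) because the Yager module is free of rank one. Your proof of (1) and (3) instead runs the Shapiro-lemma identification $H^i_{\mathrm{Iw}}(F_{*,\gq},T)\cong H^i(F_\gq, T\otimes\Lambda^\iota)$ together with the vanishing of $H^0_{\mathrm{Iw}}$, the $\Lambda$-torsionness of $H^2_{\mathrm{Iw}}$ (local Tate duality), and the $\Lambda$-adic local Euler characteristic formula for the perfect complex $R\Gamma(F_\gq,\mathbb{T})$; this is sound, treats the one- and two-variable towers uniformly, and avoids the Wach-module/Yager-module machinery entirely --- though it only yields the rank, whereas the paper's isomorphism $H^1_{\mathrm{Iw}}(F_{\infty,\gq},T)\cong H^1_{\mathrm{Iw}}(F_{\cyc,\gq},T)\,\widehat{\otimes}\,S_{L_\infty/\Q_p}$ is finer information that is reused later (e.g.\ in Lemma \ref{lemmacheck} and Lemma \ref{lem: modp}). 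For (2), your Hochschild--Serre descent correctly produces the exact sequence $0\to T^{G_{F_{\cyc,\gq}}}\to H^1_{\mathrm{Iw}}(F_{\cyc,\gq},T)\to\varprojlim_n H^1(F_{\cyc,\gq},T)^{\Gamma_n}\to 0$ (the identification of corestriction with the natural projection on coinvariants, the Mittag-Leffler argument, and the compactness step are all fine), but note that the remaining claim --- torsion-freeness of the right-hand limit --- is essentially equivalent to statement (2) itself, so your citation of Perrin-Riou there is carrying the real weight; this is no worse than the paper, which cites \cite[A.2]{PR95} for (1) and (2) wholesale, but you should be aware that your (2) is a reformulation plus citation rather than an independent proof.
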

\begin{proof}
(1) and (2) is due to Perrin-Riou (see \cite[A.2]{PR95}).

Note that, by \cite[p. 23]{BL21} and the proof of Lemma 2.16 of \cite{BL21}, we have 
$$H^1_{\mathrm{Iw}}(F_{\infty,\gq},T)\cong \big(N_{L_\infty}(T)^{\psi=1}\big)^\Delta \cong \big(N_{\Q_p}(T)^{\psi=1}\big)^\Delta \widehat{\otimes} S_{L_\infty/\Q_p} \cong H^1_{\mathrm{Iw}}(F_{\cyc,\gq},T) \widehat{\otimes} S_{L_\infty/\Q_p},$$
where $\Delta=\Gal(L_\infty(\mu_{p^\infty})/k_\infty)$ is of order $p-1$, 
$N_{L_\infty}(T)$ and $N_{\Q_p}(T)$ are Wach modules as in \textit{(loc.cit)} and $\psi$ (à la Fontaine) is the left inverse of the Frobenius $\phi$-operator on the respective Wach modules. Finally, note that the Yager module is a free module of rank one over $\Lambda_{\Q_p}(U)$ (cf. section \ref{sec Yager}). This proves (3). 
\end{proof}
\begin{comment}
Note that, by hypothesis \textbf{(Tors)}, the torsion submodule of $H^1_{\mathrm{Iw}}(F_{\cyc,\gq},T)$ is trivial (see \cite[proof of Lemma 3.22]{BL17}) and hence   the following corollary is an \textcolor{blue}{easy} consequence of the above proposition. \textcolor{blue}{Might not be true even. Check. Tensor product of two torsion-free modules even over a domain might not be torsion free. But the good thing is we don't need the next corollary but it might be good to have it.}

\begin{corollary}\label{Iwafree}
Under the hypothesis  \textbf(Tors), $H^1_{\mathrm{Iw}}(F_{\infty,\gq},T)$ is a torsion-free $\Z_p[[\Omega_p]]$-module of rank $g_\gq$.
\end{corollary}
\end{comment}
\begin{lemma}\label{lemmacheck}
The kernel and the  image of $\col_{T,I_\gq}^\infty$ are $\Z_p[[\Omega_p]]$-modules of rank $g_\gq - |I_\gq|$ and $|I_\gq|$ respectively. 
\end{lemma}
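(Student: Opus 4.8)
The plan is to deduce the rank statement for the two-variable Coleman map $\col_{T,I_\gq}^\infty$ from the corresponding one-variable statement over the cyclotomic tower, using the decomposition of $H^1_{\mathrm{Iw}}(F_{\infty,\gq},T)$ recorded in the proof of Proposition \ref{prop:rankIwasawa}(3). First I would recall from \cite{BL17} (the analogue of \cite[Lemma 3.22]{BL17} for the matrix $M_{T,\gq}$) that the one-variable Coleman map $\col_{T,\gq,i}^\cyc$ and, more to the point, the bundled map
\[
\col_{T,I_\gq}^\cyc:\HIw(F_{\cyc,\gq},T)\longrightarrow \bigoplus_{i=1}^{|I_\gq|}\Z_p[[\Gamma_\cyc]]
\]
has image of rank $|I_\gq|$ and kernel of rank $g_\gq-|I_\gq|$ over $\Z_p[[\Gamma_\cyc]]$; this follows because the logarithmic matrix $M_{T,\gq}$ has nonzero determinant (it is a product of the invertible $C_\gq^{-1}$, powers of $C_{\phi,\gq}$, and the diagonal factors involving the $\Phi_{p^n}$, whose determinant is a nonzero element of $\mathcal{H}(\Gamma_\cyc)$), so the big logarithm $\mathcal{L}_{T,\gq}^\infty$, which has rank $g_\gq$ by Proposition \ref{prop:rankIwasawa}(1), factors through the $\col^\infty_{T,\gq,i}$ in a way that forces the stated ranks.

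Next I would pass from one variable to two. By the proof of Proposition \ref{prop:rankIwasawa}(3) we have the identification
\[
H^1_{\mathrm{Iw}}(F_{\infty,\gq},T)\;\cong\; H^1_{\mathrm{Iw}}(F_{\cyc,\gq},T)\,\widehat{\otimes}\, S_{L_\infty/\Q_p},
\]
and by the construction \eqref{two-variable-coleman} the map $\col_{T,i}^\infty$ is obtained from $\varprojlim_{L_m}\col_{T,L_m,i}$ by applying $y_{L_\infty/\Q_p}\otimes 1$, i.e. it is (up to the identification $Y_{L_\infty/\Q_p}\cdot\Lambda(\Omega_p)\cong\Lambda(\Omega_p)$) the base change of the cyclotomic Coleman map along $\Z_p[[\Gamma_\cyc]]\hookrightarrow \Z_p[[\Gamma_\cyc]]\widehat{\otimes}\Lambda(\Gamma_{\mathrm{ur}})=\Lambda(\Omega_p)$, twisted by the free rank-one Yager module. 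Since $S_{L_\infty/\Q_p}$ is free of rank one over $\Lambda_{\Q_p}(\Gamma_{\mathrm{ur}})$ and $\Lambda(\Omega_p)$ is flat over $\Z_p[[\Gamma_\cyc]]$ (indeed $\Lambda(\Omega_p)\cong \Z_p[[\Gamma_\cyc]][[\Gamma_{\mathrm{ur}}]]$ is a power series ring over it), tensoring the exact sequence
\[
0\to \ker\col_{T,I_\gq}^\cyc \to \HIw(F_{\cyc,\gq},T)\to \im\col_{T,I_\gq}^\cyc\to 0
\]
with $\Lambda(\Omega_p)$ over $\Z_p[[\Gamma_\cyc]]$ stays exact, and ranks are preserved: $\rank_{\Z_p[[\Gamma_\cyc]]}$ of each term equals $\rank_{\Lambda(\Omega_p)}$ of its base change, because $\Lambda(\Omega_p)/\mathfrak{p}\Lambda(\Omega_p)$ for $\mathfrak{p}$ the generic point of $\Spec\Z_p[[\Gamma_\cyc]]$ localizes to the fraction field. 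Hence $\im\col_{T,I_\gq}^\infty$ has rank $|I_\gq|$ and $\ker\col_{T,I_\gq}^\infty$ has rank $g_\gq-|I_\gq|$ over $\Lambda(\Omega_p)=\Z_p[[\Omega_p]]$.

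The main obstacle I anticipate is being careful about the precise compatibility in \eqref{two-variable-coleman}: one must check that applying $y_{L_\infty/\Q_p}\otimes 1$ genuinely realizes the two-variable map as the Yager-twisted base change of the inverse limit of the $\col_{T,L_m,i}$, rather than something with a more complicated dependence on $L_m$, and that the semi-local twist by $\Lambda(\Omega_\gq)^\iota$ in \eqref{eq:coleman} does not disturb the rank count (it does not, since $\iota$ is a ring automorphism of $\Lambda(\Omega)$ and Shapiro's lemma is rank-preserving). A secondary point is to confirm that $\mathcal{H}(\Gamma_\cyc)$-coefficients in $M_{T,\gq}$ do not obstruct the rank argument over $\Z_p[[\Gamma_\cyc]]$; this is handled by extending scalars to $\mathcal{H}(\Gamma_\cyc)$, where $\mathcal{L}_{T,\gq}^\infty$ and the matrix factorization live, and then noting that $\mathcal{H}(\Gamma_\cyc)$ is faithfully flat over the localization of $\Z_p[[\Gamma_\cyc]]$ at height-one primes so ranks are unaffected. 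Once these compatibilities are in place, the lemma follows formally.
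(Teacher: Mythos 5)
Your proposal is correct and follows essentially the same route as the paper: reduce to the cyclotomic statement via the identification of the two-variable image with the completed tensor product of the one-variable image and the free rank-one Yager module $S_{L_\infty/\Q_p}$ (the paper cites the proof of Lemma 2.16 of \cite{BL21} for exactly this, and \cite[Lemma 1.2]{ponsinet} for the cyclotomic rank, which you instead re-derive from the nonvanishing of $\det M_{T,\gq}$). The only cosmetic difference is that you phrase the descent as flat base change along $\Z_p[[\Gamma_\cyc]]\hookrightarrow\Lambda(\Omega_p)$ and handle the kernel explicitly via rank additivity with Proposition \ref{prop:rankIwasawa}(3), which is implicit in the paper.
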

\begin{proof}
By the proof of Lemma 2.16 of \cite{BL21}, $$\im(\col_{T,I_\gq}^\infty) \cong \im(\col_{T,I_\gq}^\cyc) \widehat{\otimes} S_{L_\infty/\Q_p.}$$
and $\im(\col_{T,I_\gq}^\cyc) $ is of rank $|I_\gq|$   contained in a free  $\Z_p[[\Gamma_\cyc]]$-module of finite index (see \cite[Lemma 1.2]{ponsinet}).
{Hence $\im(\col_{T,I_\gq}^\infty)$ is of rank $|I_\gq|$  contained in a free $\Z_p[[\Omega_p]]$-module of finite index.}
\end{proof}

\section{A Kim-type control theorem and a mini-control theorem}\label{sec:resultmain}
 
We prove two results, a Kim-type control theorem, generalizing B.D. Kim's theory for elliptic curves, and a corollary, which is a mini-control theorem that shows that the finitenes of a Selmer group at base level implies cotorsion of the same Selmer group in our towers of number fields.
The first theorem below (Theorem \ref{thm:mainKIM})
generalizes a control theorem of Ponsinet which was over the cyclotomic extension (see \cite[Lemma 2.3]{ponsinet}).

\begin{theorem} \label{thm:mainKIM}
For all but finitely many $s\in \Z$, the kernel and the cokernel of the restriction map $$ \rho_{\vec{n}}:\Sel_{\underline{I}^c}(M_s/F_{\vec{n}}) \rightarrow \Sel_{\underline{I}^c}(M_s/F_\infty)^{\Omega_{\vec{n}}}$$ are bounded as $\vec{n}$ varies.
\end{theorem}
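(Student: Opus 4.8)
The plan is to run the usual snake-lemma argument comparing the defining sequences of the Selmer groups at level $F_{\vec n}$ and at level $F_\infty$. First I would write down the commutative diagram
\begin{equation*}
\begin{CD}
0 @>>> \Sel_{\underline{I}^c}(M_s/F_{\vec n}) @>>> H^1(F_\Sigma/F_{\vec n}, M_s) @>>> \mathcal{P}_{\Sigma,\underline{I}^c}(M_s/F_{\vec n})\\
@. @VV{\rho_{\vec n}}V @VV{h_{\vec n}}V @VV{g_{\vec n}}V\\
0 @>>> \Sel_{\underline{I}^c}(M_s/F_\infty)^{\Omega_{\vec n}} @>>> H^1(F_\Sigma/F_\infty, M_s)^{\Omega_{\vec n}} @>>> \mathcal{P}_{\Sigma,\underline{I}^c}(M_s/F_\infty)^{\Omega_{\vec n}}
\end{CD}
\end{equation*}
so that $\ker\rho_{\vec n}$ and $\coker\rho_{\vec n}$ are controlled by $\ker h_{\vec n}$, $\coker h_{\vec n}$ and $\ker g_{\vec n}$ via the snake lemma. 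The global map $h_{\vec n}$ is handled by inflation-restriction: $\ker h_{\vec n} = H^1(\Omega_{\vec n}, M_s^{G_{F_\infty}})$ and $\coker h_{\vec n}$ embeds into $H^2(\Omega_{\vec n}, M_s^{G_{F_\infty}})$. Since $\Omega \cong \Z_p^2$ has cohomological dimension $2$ and $M_s^{G_{F_\infty}}$ is cofinitely generated, both of these are finite and bounded independently of $\vec n$ once $s$ is chosen so that $M_s^{G_{F_\infty}}$ is finite — this is precisely where the ``all but finitely many $s$'' hypothesis enters (twisting by $\chi^s$ kills the $G_{F_\infty}$-invariants for all but finitely many $s$, as $M^{G_{F_\infty}}$ is cofinitely generated).

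Next I would analyze the local map $g_{\vec n}$, which decomposes as a product over $w \in \Sigma'$. For $w \nmid p$ the relevant quotient is $H^1(F_{\infty,w}, M_s)/H^1_{\mathrm{ur}}(F_{\infty,w},M_s)$ versus its analogue at $F_{\vec n}$; here the standard argument (as in Kim, and as in Ponsinet's cyclotomic version, cf. \cite[Lemma 2.3]{ponsinet}) shows the kernel of the local restriction map is a subquotient of $H^1$ of the (pro-$p$, finitely generated) decomposition group acting on $M_s$, hence finite and bounded — and in fact for all but finitely many $w$ it vanishes outright because $M_s$ is unramified there and the primes split completely in at most finitely many ways. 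For $w \mid p$, the key point is that we have \emph{defined} the local condition at finite level by $H^1_{I_\gq}(F_{\vec n, \gq}, M_s^*) = H^1_{I_\gq}(F_{\infty,\gq}, M_s^*)^{\Omega_{\vec n}}$ (and likewise the $\underline I^c$ condition for $M_s$), so the relevant vertical map is
$$
\frac{H^1(F_{\vec n,\gq}, M_s)}{H^1_{I_\gq^c}(F_{\vec n,\gq},M_s)} \longrightarrow \left(\frac{H^1(F_{\infty,\gq}, M_s)}{H^1_{I_\gq^c}(F_{\infty,\gq},M_s)}\right)^{\Omega_{\vec n}}.
$$
By construction the quotient on the left is literally $\bigl(H^1(F_{\infty,\gq},M_s)/H^1_{I_\gq^c}(F_{\infty,\gq},M_s)\bigr)^{\Omega_{\vec n}}$ up to the discrepancy measured by $H^1(\Omega_{\vec n},\,\text{something})$; more precisely, applying $\Omega_{\vec n}$-cohomology to the short exact sequence $0 \to H^1_{I_\gq^c}(F_{\infty,\gq},M_s) \to H^1(F_{\infty,\gq},M_s) \to Q_\gq \to 0$ shows $\ker g_{\vec n, \gq}$ is a subquotient of $H^1(\Omega_{\vec n}, H^1_{I_\gq^c}(F_{\infty,\gq},M_s))$, which is finite and bounded because $H^1_{I_\gq^c}(F_{\infty,\gq},M_s)$ is a cofinitely generated $\Lambda(\Omega_p)$-module (it is the orthogonal complement of $\ker\col_{T^*,I_\gq}^\infty$, whose corank is computed in Lemma \ref{lemmacheck}) and $\Omega_{\vec n}$ has bounded cohomology on such modules with a uniform bound.

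The main obstacle, I expect, is this last local uniform boundedness at $p$: one needs that $H^1\bigl(\Omega_{\vec n}, H^1_{I_\gq^c}(F_{\infty,\gq},M_s)\bigr)$ (and the corresponding $H^2$ or $H^0$ of cofree complements) is bounded as $\vec n \to \infty$, \emph{not just finite for each $\vec n$}. The clean way to get this is to reduce, via the cofinitely-generated structure of the $\Lambda(\Omega_p)$-module, to the case of $\Lambda(\Omega_p)^\vee$ and a finite module, for which $H^i(\Omega_{\vec n}, -)$ can be computed explicitly using that $\Omega_{\vec n} \cong \Z_p^2$ with an explicit Koszul-type complex, and observe the answer stabilizes; the finite-module contribution is bounded by $\#(M_s^*/pM_s^*)$-type quantities times a constant depending only on the number of topological generators. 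I would cite the analogous boundedness statements from \cite{kimdoublysigned} and \cite{ponsinet} and adapt them to the two-variable setting, the genuinely new input being the two-variable Coleman map of Section \ref{sec:multiselmer} and Lemma \ref{lemmacheck} which guarantee the local conditions at $p$ are cofinitely generated of the expected corank over $\Lambda(\Omega)$. Finally, assembling the snake lemma with all these uniform bounds yields the claimed boundedness of $\ker\rho_{\vec n}$ and $\coker\rho_{\vec n}$.
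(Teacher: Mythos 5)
Your skeleton (the comparison diagram and the snake lemma) is the same as the paper's, but the two places where the real content lies are handled incorrectly. At the prime $p$ you propose to bound the kernel of the map on local quotients by a subquotient of $H^1\bigl(\Omega_{\vec{n}},H^1_{I_\gq^c}(F_{\infty,\gq},M_s)\bigr)$ and then to prove a bound, uniform in $\vec{n}$, for such cohomology of cofinitely generated $\Lambda(\Omega_p)$-modules. This step is both unnecessary and unsound as stated: for a general cofinitely generated $\Lambda(\Omega_p)$-module these groups need not even be finite, let alone bounded (already $H^0\bigl(\Omega_{\vec{n}},\Lambda(\Omega_p)^\vee\bigr)$ has $\Z_p$-corank $p^{n_1+n_2}$), and the ``Koszul complex, the answer stabilizes'' sketch does not repair this. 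The point you half-notice but do not exploit is that the finite-level condition is \emph{defined} as $H^1_{I_\gq^c}(F_{\vec{n},\gq},M_s)=H^1_{I_\gq^c}(F_{\infty,\gq},M_s)^{\Omega_{\vec{n}}}$. Combined with \textbf{(Tors.)}, which forces $H^0(F_{\infty,\gq},M_s)=0$ and hence (by inflation--restriction) that $H^1(F_{\vec{n},\gq},M_s)\to H^1(F_{\infty,\gq},M_s)^{\Omega_{\vec{n}}}$ is an isomorphism, left-exactness of taking $\Omega_{\vec{n}}$-invariants shows the kernel of the map on quotients at $\gq$ is exactly zero; since the snake lemma only requires the \emph{kernel} of the local map, no group cohomology of the local condition at $p$ enters at all. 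The module you were trying to bound would only be relevant for surjectivity of that local map, which the theorem does not need.

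Second, you place the ``all but finitely many $s$'' hypothesis at the wrong term. The twist is trivial on $G_{F_\infty}$ ($M_s\cong M$ as $G_{F_\infty}$-modules), so varying $s$ cannot make $M_s^{G_{F_\infty}}$ finite for ``most'' $s$; what controls the global map $h$ is again \textbf{(Tors.)}, which gives $M_s^{G_{F_\infty}}=0$ outright, so $\ker(h)=\coker(h)=0$ for every $s$. The restriction on $s$ genuinely enters at the primes $v\nmid p$: there $F_{\infty,v}$ is the unramified $\Z_p$-extension of $F_v$, so $\Gal(F_{\infty,v}/F_{\vec{n},v})$ is procyclic, twisting by $\chi^s$ really does move Frobenius eigenvalues, and for all but finitely many $s$ the invariants $M_s^{G_{F_{\vec{n},v}}}$ are finite for every $\vec{n}$; this finiteness is what allows the divisibility argument of Lemma \ref{divisible} to bound $\ker(g)=H^1\bigl(F_{\infty,v}/F_{\vec{n},v},M_s^{G_{F_{\infty,v}}}\bigr)$ by $\#\bigl(M_s^{G_{F_{\infty,v}}}/(M_s^{G_{F_{\infty,v}}})_{\mathrm{div}}\bigr)$, a quantity independent of $\vec{n}$. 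Your appeal to ``standard arguments'' together with the remark that the contribution vanishes for all but finitely many $w$ does not supply this uniform-in-$\vec{n}$ bound, which is the actual content of the theorem away from $p$; as written, your proposal has a genuine gap at both of these steps.
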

\begin{proof}
By definition, we have the following commutative diagram:

	\begin{equation}\label{Selmer_definition}
	\begin{tikzcd}
	0 \arrow[r] & \Sel_{\underline{I}^c}(M_s/F_{\vec{n}}) \arrow[d, "\rho_{\vec{n}}"] \arrow[r] &  H^1(F_\Sigma/F_{\vec{n}}, M_s) \arrow[d, "h"]  \arrow[r, "\lambda"] & \mathcal{P}_{\Sigma,\underline{I}^c}(M_s/F_{\vec{n}}) \arrow[d, "\Xi_{\vec{n}}"] \\
		0 \arrow[r] & \Sel_{\underline{I}^c}(M_s/F_{\infty})^{\Omega_{\vec{n}}} \arrow[r] & H^1(F_\Sigma/F_{\infty}, M_s)^{\Omega_{\vec{n}}}  \arrow[r] &  \mathcal{P}_{\Sigma,\underline{I}^c}(M_s/F_\infty)^{\Omega_{\vec{n}}} 
	\end{tikzcd}
	\end{equation}
 By the assumption \textbf{(Tors)},   the fact that $\Gal(F_{\infty,\gq}/F_\gq)$ is a pro-$p$ group along with the orbit-stabilizer theorem implies that    $H^0(F_{\infty,\gq},M_s)=0$ and hence $\ker(h)=0$ and $\coker(h)=0$. Thus, we have that $\ker(\rhon)=0$ and $\coker(\rhon)=\ker(\Xi_{\vec{n}})\cap\im(\lambda)$ by the snake lemma.  Thus, it suffices to prove the theorem for $\ker(\Xi_{\vec{n}})$, which we now study one prime  $v$ at a time. We show that the $v$-component is almost always $0$, and then analyze the $v$-component when it is not. To this end, we consider the commutative diagram

	\begin{equation}\label{local}
	\begin{tikzcd}
	0 \arrow[r] & H^1_{*}(F_{\vec{n},v},M_s) \arrow[d, "\ell"] \arrow[r] &  H^1(F_{\vec{n},v},M_s) \arrow[d, "g"]  \arrow[r, twoheadrightarrow] & \frac{H^1(F_{\vec{n},v},M_s)}{H^1_*(F_{\vec{n},v},M_s)} \arrow[d, "p_v"] \\
		0 \arrow[r] & H^1_{*}(F_{\infty,v},M_s)^{\Omega_{\vec{n}}} \arrow[r] &  H^1(F_{\infty,v},M_s)^{\Omega_{\vec{n}} }\arrow[r] & \left(\frac{H^1(F_{\infty,v},M_s)}{H^1_*(F_{\infty,v},M_s)} \right)^{\Omega_{\vec{n}}},
	\end{tikzcd}
	\end{equation}

where $H^1_{*}(F_{\vec{n},v},M_s)=\begin{cases} H^1_{I_v}(F_{\vec{n},v},M_s) \text{ if } v=\gq\in \{\gp,\gp^c\}, \text{ and} \\ H^1_{\mathrm{ur}}(F_{\vec{n},v},M_s) \text{ if not.}  \end{cases} $

\textbf{The case $v=\gq$.}
When $v|p$, both $\ell$ and $g$ are isomorphisms;  $\ell$ is an isomorphism by definition, whereas $g$ is an isomorphism by the inflation-restriction exact sequence and \textbf{(Tors)}.  Thus, the snake lemma implies that $\ker(p_v)=0$. 

\textbf{The case of archimedean $v$.}
Here, we have $\coker(\ell)=0$ \cite[Sect A2.4]{perrinriou00b}, and also $\ker(g)=0$, since $v$ splits completely. 

\textbf{The case $v\nmid p$.} Since $\coker(\ell)=0$, (this follows from the hypothesis \textbf{(Tors)} and using inflation-restriction) we know that $\ker(g)$ surjects onto $\ker(p_v)$ by the snake lemma. Now $\ker(g)=H^1\big(F_{\infty,v}/F_{\vec{n},v}, M_s^{G_{F_\infty,v}}\big)$.
{Since $v \nmid p$, $F_{\infty,v}$ is the unique unramified $\Z_p$-extension of $F_v$. Hence $\Gal(F_{\infty,v}/F_{\vec{n},v})$ is topologically generated by a single element $\gamma_n$. For all but finitely many $s\in \Z$, $M_s^{G_{F_{\vec{n},v}}}$ is finite for every $\vec{n}$; using Lemma \ref{divisible} below, this implies that the order of $\ker(g)$ is bounded by that of $M_s^{G_{F_\infty,v}}/(M_s^{G_{F_\infty,v}})_{\mathrm{div}}$. }

\begin{comment}
Without loss of generality, assume that $d\geq 2$, so that $\Gal(F_{\infty,v}/F_v)=\Zp^2$. Write $F_{\vec{n},v}$ as $F_{n,m,v}$, ($n$th layer in $F_{cyc}$, $m$th in unramified). Then we have the following inflation-restriction exact sequence:

\begin{align*}
  0\rightarrow H^1\big(F_{\cyc,m,v}/F_{n,m,v},(M_s)^{G_{F_{\cyc,m},v}}\big)&\rightarrow H^1\big(F_{\infty,v}/F_{n,m,v}, (M_s)^{G_{F_\infty,v}}\big)\\
  &\rightarrow H^1\big(F_{\infty,v}/F_{\cyc,m,v}, (M_s)^{G_{F_\infty,v}}\big)^{\Gal({F_{\cyc,m,v}/F_{n,m,v}})}\rightarrow 0  
\end{align*}
The third map  is surjective under the assumption $\textbf{(Tors)}$.

For all but finitely many $s \in \Z$, $(M_s)^{G_{F_{n,m,v}}}$  is finite for every $m$ and $n$. Hence Lemma \ref{divisible} applies and the first term is bounded by $(M_s)^G/(M_s)^G_{div}$ where $G=G_{F_{\cyc,m,v}}$.
\textcolor{red}{But this bound is not independent of $m$! Also why is the third term bounded and independent of $m$ and $n$?}
\end{comment}

%Combine Ponsinet's techniques with inflation-restriction. Also, this should be in section 3. Note that in Ponsinet, $\Sigma$ is finite and we do not need finite decomposition of the non-$p$ primes! (In fact, a $\Zp^2$-extension has non- $p$ primes that are infinitely split, as one readily sees by considering what field Frobenius fixes...

\end{proof}

\begin{lemma}\label{divisible}
Suppose that $N$ is a $\Zp$-module with an action of $\Gamma\cong \Zp$ so that $N^\Gamma$ is finite. 
Then the order of $H^1(\Gamma, N)$ is bounded by that of $N/N_{\mathrm{div}}$.
\end{lemma}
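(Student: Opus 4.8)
The plan is to reduce the statement to the cokernel of $\gamma-1$ for a topological generator $\gamma$ of $\Gamma$, and then to observe that the maximal divisible submodule $N_{\mathrm{div}}$ of $N$ already exhausts the image of $\gamma-1$ modulo $N_{\mathrm{div}}$. (We take $N$ to be cofinitely generated over $\Zp$, which is the situation in which the lemma is applied, $N$ being a $\Zp$-submodule of the discrete module $M_s$.)

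First I would fix a topological generator $\gamma$ of $\Gamma\cong\Zp$. Since $\cd_p(\Gamma)=1$, one has $H^0(\Gamma,N)=\ker(\gamma-1\colon N\to N)=N^\Gamma$ and $H^1(\Gamma,N)=\coker(\gamma-1\colon N\to N)=N/(\gamma-1)N$. So it suffices to prove $|N/(\gamma-1)N|\le|N/N_{\mathrm{div}}|$, and for this it is enough to show $(\gamma-1)N\supseteq N_{\mathrm{div}}$: indeed, that inclusion exhibits $N/(\gamma-1)N$ as a quotient of $N/N_{\mathrm{div}}$.

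The key step is to check that $\gamma-1$ is \emph{surjective} on $N_{\mathrm{div}}$. Because $N_{\mathrm{div}}$ is a characteristic submodule, $\gamma-1$ maps $N_{\mathrm{div}}$ into itself, and $\ker(\gamma-1\colon N_{\mathrm{div}}\to N_{\mathrm{div}})\subseteq N^\Gamma$ is finite by hypothesis. Now $N_{\mathrm{div}}$, being a divisible cofinitely generated $\Zp$-module, is isomorphic to $(\Qp/\Zp)^{r}$ for some $r\ge 0$, so its Pontryagin dual is free of rank $r$ over $\Zp$; the $\Zp$-linear endomorphism of $\Zp^{r}$ dual to $\gamma-1$ has cokernel dual to $\ker(\gamma-1\colon N_{\mathrm{div}}\to N_{\mathrm{div}})$, hence finite, so it has nonzero determinant and is therefore injective. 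Dualizing back, $\gamma-1\colon N_{\mathrm{div}}\to N_{\mathrm{div}}$ is surjective, i.e. $(\gamma-1)N_{\mathrm{div}}=N_{\mathrm{div}}$, which is what was needed. Equivalently, this says $H^1(\Gamma,N_{\mathrm{div}})=0$, so the long exact sequence of $0\to N_{\mathrm{div}}\to N\to N/N_{\mathrm{div}}\to 0$ embeds $H^1(\Gamma,N)$ into $H^1(\Gamma,N/N_{\mathrm{div}})$, a group of order $|(N/N_{\mathrm{div}})^\Gamma|\le|N/N_{\mathrm{div}}|$ since $N/N_{\mathrm{div}}$ is finite.

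I do not expect a genuine obstacle: the argument is elementary once the cohomology of a procyclic group is identified with the cokernel of $\gamma-1$. The only point demanding a little care is the structure theory identifying $N_{\mathrm{div}}$ with $(\Qp/\Zp)^{r}$ together with the Pontryagin-duality bookkeeping that converts "kernel finite" into "cokernel zero" on $N_{\mathrm{div}}$ — and this is exactly the step where the hypothesis that $N^\Gamma$ is finite is used.
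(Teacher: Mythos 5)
Your proof is correct and follows essentially the same route as the paper: identify $H^1(\Gamma,N)$ with $N/(\gamma-1)N$ for a topological generator $\gamma$, and deduce from the finiteness of $N^\Gamma=\ker(\gamma-1)$ that $(\gamma-1)N\supseteq N_{\mathrm{div}}$. Your Pontryagin-duality computation (under the cofinitely generated hypothesis, which is indeed the setting where the lemma is applied) simply makes explicit the containment that the paper asserts in one line.
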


\begin{proof}
Fix a topological generator $\gamma$ of $\Gamma$. Since $\Gamma$ is pro-$p$ cyclic, we know that $H^1(\Gamma,N)\cong N/(\gamma-1)N$. Since $N^\Gamma=\ker(N \xrightarrow{\gamma-1}N )$ is finite, $(\gamma-1)N$ must contain the maximal divisible subgroup $N_{\mathrm{div}}$.
\end{proof}
\begin{proposition}\label{prop:one}
Suppose that the signed Selmer group  $\Sel_{\underline{I}^{{c}}}(M/F_\infty)$ is $\Lambda(\Omega)$-cotorsion. Then the defining sequence 
$$0 \rightarrow \Sel_{\underline{I}}(M^*/F_\infty) \rightarrow H^1(F_\Sigma/F_\infty, M^*) \rightarrow \mathcal{P}_{\Sigma,\underline{I}}(M^*/F_\infty) \rightarrow 0$$
is short exact.
\end{proposition}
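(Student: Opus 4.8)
The goal is to show surjectivity of the global-to-local map defining $\Sel_{\underline{I}}(M^*/F_\infty)$, i.e. that
\[
H^1(F_\Sigma/F_\infty, M^*) \longrightarrow \mathcal{P}_{\Sigma,\underline{I}}(M^*/F_\infty)
\]
is surjective, the exactness on the left being automatic from the definition as a kernel. The plan is to use the Poitou--Tate nine-term exact sequence for the discrete $G_{F_\Sigma}$-module $M^*$ over $F_\infty$, whose cokernel of the global-to-local restriction map is controlled by the compact dual $H^1_{\mathrm{Iw}}$-type object attached to $T$ together with a global $H^2$; since the local conditions $\mathcal{P}_{\Sigma,\underline{I}}$ involve only \emph{quotients} of the full local cohomology, surjectivity onto $\mathcal{P}_{\Sigma,\underline{I}}$ follows once I know surjectivity onto the full product of local $H^1$'s modulo the image of the global $H^1$, which is exactly the statement that the relevant Tate--Shafarevich/$H^2_c$ term vanishes. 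Concretely, I would pass to Pontryagin duals: the Pontryagin dual of $\coker\big(H^1(F_\Sigma/F_\infty,M^*)\to \prod_w H^1(F_{\infty,w},M^*)/(\text{local condition})\big)$ is, by Poitou--Tate, a submodule of a Selmer-type group for $T$ (the compact side), namely a quotient of $H^1$ attached to the dual local conditions $\underline{I}^c$ — and this is precisely (a twist/dual of) $\Sel_{\underline{I}^c}(M/F_\infty)$-type object, which by hypothesis is $\Lambda(\Omega)$-cotorsion, hence its dual is $\Lambda(\Omega)$-torsion.

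The key point making the argument work is a rank count. First I would establish that $H^2(F_\Sigma/F_\infty, M^*)=0$: over the $\Z_p^2$-extension $F_\infty$ of the imaginary quadratic field $F$ one has $\mathrm{cd}_p(G_{F_\Sigma,F_\infty}) \le 2$ and, because $F$ is imaginary quadratic (no real places) so the Euler characteristic has the favorable sign, combined with hypothesis \textbf{(Tors.)} which forces the relevant $H^0$ and $H^2$ local terms to vanish at $p$, the global $H^2$ vanishes — this is the standard weak Leopoldt statement in this setting. Granting $H^2=0$, the Poitou--Tate sequence degenerates to a four-term exact sequence
\[
0 \to \Sel_{\underline{I}}(M^*/F_\infty) \to H^1(F_\Sigma/F_\infty,M^*) \to \mathcal{P}_{\Sigma,\underline{I}}(M^*/F_\infty) \to \mathcal{X}_{\underline{I}^c}^{\mathrm{comp}} \to 0,
\]
where the last term is the Pontryagin dual of $\Sel_{\underline{I}^c}(M/F_\infty)$ up to twisting by $\chi$ and an involution $\iota$ on $\Lambda(\Omega)$ (this identification is exactly the content of the dual Coleman maps $\col_{T^*,I_\gq}^\infty$ and the semi-local Tate pairing \eqref{TatePairing} used to define $H^1_{I_\gq}$, cf. Remark \ref{rem:mithu} and \cite[Lemma 3.14]{Dion2022}). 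Now I would compare $\Lambda(\Omega)$-ranks: using Proposition \ref{prop:rankIwasawa}(3) and Lemma \ref{lemmacheck}, both $H^1(F_\Sigma/F_\infty,M^*)$ (via global Euler characteristic) and $\mathcal{P}_{\Sigma,\underline{I}}(M^*/F_\infty)$ have the same $\Lambda(\Omega)$-corank, and the cotorsion hypothesis on $\Sel_{\underline{I}^c}(M/F_\infty)$ makes the fourth term torsion; since $\Sel_{\underline{I}}(M^*/F_\infty)$ is then also forced to be cotorsion, the four-term sequence of modules with matching coranks whose outer terms (after the first and before the last) account for the full corank forces the cokernel map $\mathcal{P}_{\Sigma,\underline{I}} \to \mathcal{X}_{\underline{I}^c}^{\mathrm{comp}}$ to be \emph{zero} — wait, this only gives that it has torsion image, not zero image.

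The main obstacle is therefore upgrading ``the cokernel is torsion'' to ``the cokernel is zero''. I would handle this not by ranks alone but by invoking the surjectivity onto the \emph{full} semi-local cohomology first: the genuinely surjective statement coming from $H^2(F_\Sigma/F_\infty,M^*)=0$ is that $H^1(F_\Sigma/F_\infty,M^*) \to \prod_{w\in\Sigma'} H^1(F_{\infty,w},M^*)$ has cokernel dual to $\Sha^2$ or to a global $H^0$-type term, which vanishes by \textbf{(Tors.)} and the imaginary-quadratic hypothesis (no real places, so no $2$-torsion obstruction, $p$ being odd anyway). Once $H^1(F_\Sigma/F_\infty,M^*)\to \prod_w H^1(F_{\infty,w},M^*)/H^1_{\mathrm{ur}}$ or more precisely onto the unramified-quotient part at $w\nmid p$ and onto $\bigoplus_{w\mid\gq}H^1(F_{\infty,w},M^*)$ at $w\mid p$ is surjective, the further projection onto the quotient by $H^1_{I_\gq}(F_{\infty,\gq},M^*)$ is surjective trivially, since $\mathcal{P}_{\Sigma,\underline{I}}$ is literally built as a product of quotients of these groups. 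So the real work reduces to: (i) prove $H^2(F_\Sigma/F_\infty,M^*)=0$ (weak Leopoldt / Jannsen's conjecture in the $\Z_p^2$ case for imaginary quadratic $F$, which is known here because of the crystalline and good-reduction hypotheses on $\mathcal{M}$ and the cotorsion input), and (ii) check the global-to-local map surjects onto the full product $\prod_{w\in\Sigma'}H^1(F_{\infty,w},M^*)/H^1_{\mathrm{ur}}$, which again follows from Poitou--Tate once the relevant dual Selmer group is cotorsion — and \emph{that} is precisely where the hypothesis on $\Sel_{\underline{I}^c}(M/F_\infty)$ enters. I expect step (i), the vanishing of global $H^2$ over $F_\infty$, to be the crux, and I would cite the appropriate form of Jannsen's/Perrin-Riou's results together with \textbf{(Tors.)} to obtain it.
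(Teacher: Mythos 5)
Your own parenthetical worry --- ``this only gives that it has torsion image, not zero image'' --- is exactly the gap, and the repair you propose does not close it. The step you fall back on, namely that $H^2(F_\Sigma/F_\infty,M^*)=0$ forces the localization map to surject onto the \emph{full} semi-local cohomology $\bigoplus_{w\mid p}H^1(F_{\infty,w},M^*)$ (together with the unramified quotients away from $p$), is false on corank grounds: by the local Euler characteristic over the $\Z_p^2$-tower (cf.\ Proposition \ref{prop:rankIwasawa}(3) and Lemma \ref{lemmacheck}), the semi-local term at the two primes above $p$ has $\Lambda(\Omega)$-corank $g_\gp+g_{\gp^c}=g$, while $H^1(F_\Sigma/F_\infty,M^*)$ has corank $g_-<g$ under weak Leopoldt, so no such surjection can exist. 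In the Poitou--Tate sequence the cokernel of the map to the full local product is dual to (a quotient of) the compact module $H^1_{\mathrm{Iw}}(F_\Sigma/F_\infty,T)$, not to $H^2(F_\Sigma/F_\infty,M^*)$ or a $\Sha^2$-term, and it is nonzero in general; vanishing of the global $H^2$ does not help. So after your own correct observation that cotorsion of $\Sel_{\underline{I}^c}(M/F_\infty)$ only makes the Poitou--Tate cokernel $\Lambda(\Omega)$-torsion, the proposal supplies no mechanism to upgrade ``torsion'' to ``zero'' (one conceivable repair would be to prove that the relevant compact signed Selmer module over $\Lambda(\Omega)$ has no nonzero torsion submodule, but you neither state nor prove this).

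The paper's proof proceeds quite differently and avoids this issue: one twists by $\chi^s$ and uses the cotorsion hypothesis to make $\big(\Sel_{\underline{I}^c}(M_s/F_\infty)\big)^{\Omega_n}$ finite for all but finitely many $s$, then invokes the control theorem (Theorem \ref{thm:mainKIM}) to conclude that the finite-level groups $\Sel_{\underline{I}^c}(M_s/F_n)$ are finite; Greenberg's finite-level duality result \cite[Proposition 4.13]{Greenberg} (applicable because the local conditions are divisible) then identifies the cokernel of $H^1(F_\Sigma/F_n,M^*_{-s})\to\mathcal{P}_{\Sigma,\underline{I}}(M^*_{-s}/F_n)$ with the Pontryagin dual of $H^0(F_n,M_s)$, which vanishes by \textbf{(Tors.)}; surjectivity over $F_\infty$ follows by passing to the direct limit and untwisting. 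The finiteness at finite layers obtained from twisting plus the control theorem is precisely the ingredient your argument is missing.
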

\begin{proof}
Suppose $\Sel_{\underline{I}^{{c}}}(M/F_\infty)$ is cotorsion as a $\Lambda(\Omega)$-module. Then for all but finitely many $s \in \Z$, $\big(\Sel_{\underline{I}^{{c}}}(M/F_\infty) \otimes \chi^s\big)^{\Omega_{{n}}} \cong \big(\Sel_{\underline{I}^{{c}}}(M_s/F_\infty)\big)^{\Omega_{{n}}} $ is finite for every ${n}$. Therefore, by Theorem \ref{thm:mainKIM}, for all by finitely many $s$, $\Sel_{\underline{I}^c}(M_s/F_{{n}})$ is finite for every ${n}$. Hence for those $s$ and for all ${n}$, by \cite[Proposition 4.13]{Greenberg},
we obtain that the cokernel of the map 
$$f_{{n},s}:H^1(F_\Sigma/F_{{n}}, M_{-s}^*) \rightarrow \mathcal{P}_{\Sigma,\underline{I}}(M_{-s}^*/F_{{n}}) $$
is the Pontryagin dual of $H^0(F_{{n}},M_s)$ (note that the local components  $H^1_{I_\gq}(F_{n,\gq},M^*_{-s})$ and $H^1_{\mathrm{ur}}(F_{n,v},M^*_{-s})$ for $v \nmid p$ of  $\mathcal{P}_{\Sigma,\underline{I}}(M_{-s}^*/F_{{n}})$ are divisible groups by \cite[Lemma 3.16]{Dion2022} and \cite[Lemma 1.6]{ponsinet}). By hypothesis \textbf{(Tors)}, $H^0(F,M)=0$ and this implies that $H^0(F_\infty,M)=0$. Also $M_s \cong M$ as a $\Gal(\overline{F}/F_\cyc)$-module and hence as a $\Gal(\overline{F}/F_\infty)$-module. Therefore, $H^0(F_\infty,M_s)=0$ and hence $H^0(F_{{n}},M_s)=0$ for any ${n}$. This gives that the maps $f_{{n},s}$ are surjective for all ${n}$ and for all but finitely many $s\in \Z$. Passing to the direct limit of relative to restriction maps, the surjectivity of $f_{{n},s}$ implies the surjectivity of the following map: $$f_{\infty,s}:H^1(F_\Sigma/F_{\infty}, M_{-s}^*) \rightarrow \mathcal{P}_{\Sigma,\underline{I}}(M_{-s}^*/F_{\infty}).$$ Twisting the map $f_{\infty,s}$ by $\chi_{|_\Gamma}^{-s}$ we obtain the map
$$f_{\infty}:H^1(F_\Sigma/F_{\infty}, M^*) \rightarrow \mathcal{P}_{\Sigma,\underline{I}}(M^*/F_{\infty}),$$ which is hence a surjection.
\end{proof}

\begin{corollary}\label{prop:check}
If
the Bloch-Kato Selmer group $\Sel_{\mathrm{BK}}(M^*/F)$ is finite then the Selmer group  $\Sel_{\underline{I}}(M^*/F_\infty)$ is $\Lambda[[\Omega]]$-cotorsion.
\end{corollary}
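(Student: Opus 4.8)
The goal is to deduce $\Lambda(\Omega)$-cotorsion of $\Sel_{\underline{I}}(M^*/F_\infty)$ from the finiteness of the Bloch--Kato Selmer group $\Sel_{\mathrm{BK}}(M^*/F)$. The strategy is to combine the control theorem (Theorem \ref{thm:mainKIM}) with the rank computations from Section \ref{sec:ranks} and a standard characteristic-ideal/corank argument. First I would observe that, since the Selmer group at the infinite level is the direct limit of the Selmer groups at finite levels, proving that some $\Sel_{\underline{I}^c}(M_s/F_\infty)$ is cofinitely generated over $\Z_p$ (equivalently, has finite $\Z_p$-corank times $0$, i.e. is cotorsion with trivial $\mu$ and $\lambda$) is essentially equivalent to proving $\Sel_{\underline{I}^c}(M_s/F_{\vec{n}})$ is finite for all $\vec{n}$, via Theorem \ref{thm:mainKIM}. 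So the chain of implications I want is: $\Sel_{\mathrm{BK}}(M^*/F)$ finite $\Rightarrow$ $\Sel_{\underline{I}}(M^*/F)$ finite for the signed Selmer group at base level (comparing Bloch--Kato and signed local conditions at $F$) $\Rightarrow$ $\Sel_{\underline{I}^c}(M_s/F_{\vec{n}})$ finite for all $\vec{n}$ and almost all $s$ $\Rightarrow$ $\Sel_{\underline{I}^c}(M/F_\infty)$ is $\Lambda(\Omega)$-cotorsion $\Rightarrow$ (by Remark \ref{rem:mithu} and the functional-equation-type duality, or by a direct argument) $\Sel_{\underline{I}}(M^*/F_\infty)$ is $\Lambda(\Omega)$-cotorsion.

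**Key steps in order.** Step 1: At the base level, compare the Bloch--Kato local conditions with the signed local conditions $H^1_{I_\gq}$ at primes above $p$; the signed conditions are cut out by the kernels of the Coleman maps, and by the construction (and the interpolation property of $\mathcal{L}_{T,\gq}^\infty$) the signed condition at $F$ differs from the Bloch--Kato condition by a finite group, so $\Sel_{\mathrm{BK}}(M^*/F)$ finite forces $\Sel_{\underline{I}}(M^*/F)$ finite, and dually (twisting suitably) $\Sel_{\underline{I}^c}(M/F)$ and $\Sel_{\underline{I}^c}(M_s/F)$ finite for almost all $s$. Step 2: Invoke Theorem \ref{thm:mainKIM}: for all but finitely many $s$, $\ker\rho_{\vec n}$ and $\coker\rho_{\vec n}$ are bounded as $\vec n$ varies, so $\Sel_{\underline{I}^c}(M_s/F_\infty)^{\Omega_{\vec n}}$ is finite, with order bounded independently of $\vec n$, whenever $\Sel_{\underline{I}^c}(M_s/F_{\vec n})$ is; but finiteness at all finite layers with a uniform bound on $|\Sel_{\underline{I}^c}(M_s/F_\infty)^{\Omega_{\vec n}}|$ forces the Pontryagin dual $\mathcal{X}_{\underline{I}^c}(M_s/F_\infty)$ to be finite — in particular $\Lambda(\Omega)$-torsion. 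Actually, even just finiteness of $\Sel_{\underline{I}^c}(M_s/F_{\vec n})$ for all $\vec n$ plus boundedness of $\ker\rho_{\vec n}$ gives that $\mathcal{X}_{\underline{I}^c}(M_s/F_\infty)/(\gamma_1^{p^{n_1}}-1,\gamma_2^{p^{n_2}}-1)$ is finite of bounded order for cofinal $\vec n$; by Nakayama-type arguments over $\Lambda(\Omega)$ this forces finite generation over $\Z_p$, hence $\Lambda(\Omega)$-cotorsion of the Selmer group. Step 3: Twist back — since $\Sel_{\underline{I}^c}(M_s/F_\infty)\cong \Sel_{\underline{I}^c}(M/F_\infty)\otimes\chi^s$ as $\Lambda(\Omega)$-modules (the twisted-Selmer-group discussion of Section \ref{sec:twisted}), cotorsion for one $s$ gives it for all, in particular for $\Sel_{\underline{I}^c}(M/F_\infty)$. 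Step 4: Apply Proposition \ref{prop:one}: cotorsion of $\Sel_{\underline{I}^c}(M/F_\infty)$ makes the defining sequence for $\Sel_{\underline{I}}(M^*/F_\infty)$ short exact, from which one reads off that $H^1(F_\Sigma/F_\infty,M^*)$ surjects onto $\mathcal{P}_{\Sigma,\underline{I}}(M^*/F_\infty)$. Step 5: Conclude cotorsion of $\Sel_{\underline{I}}(M^*/F_\infty)$: compute $\Z_p[[\Omega_p]]$-ranks using Proposition \ref{prop:rankIwasawa} and Lemma \ref{lemmacheck} — the global $H^1$ has a predictable rank (by the global Euler characteristic / Greenberg's formula for $\mathcal{X}$, combined with $\Sel_{\mathrm{BK}}(M^*/F)$ finite controlling the relevant $H^2$), the quotient terms $\mathcal{P}_{\Sigma,\underline{I}}$ have rank $\sum_\gq |I_\gq| = g_-$ at $p$ plus $0$ away from $p$ (the unramified quotients being torsion), and these ranks match so the kernel $\Sel_{\underline{I}}(M^*/F_\infty)$ is $\Lambda(\Omega)$-cotorsion.

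**Main obstacle.** The technical heart is Step 2 together with the rank bookkeeping in Step 5: getting from "finite at every finite layer $F_{\vec n}$, for all but finitely many twists $s$" to "$\Lambda(\Omega)$-cotorsion" requires more than Theorem \ref{thm:mainKIM} alone — one needs a uniform bound on $|\Sel_{\underline{I}^c}(M_s/F_{\vec n})|$ as $\vec n\to\infty$ (or an equivalent corank statement) to rule out an unbounded $\lambda$-invariant, and the cleanest route is probably to argue that finiteness of the base Selmer group together with the control theorem forces $\mathcal{X}_{\underline{I}^c}(M_s/F_\infty)$ to have $\Z_p[[\Omega_p]]$-rank zero by comparing with the ranks in Lemma \ref{lemmacheck} and Proposition \ref{prop:rankIwasawa}. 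I expect the subtlety to be exactly matching the generic $\Z_p[[\Omega_p]]$-rank of $H^1(F_\Sigma/F_\infty,M^*)$ against that of the local terms so that the defining map of the Selmer group is "surjective with torsion kernel" — i.e. that the arithmetic of the global Euler--Poincaré characteristic over $F_\infty$ leaves no room for a positive-rank Selmer group once $\Sel_{\mathrm{BK}}(M^*/F)$ is finite. Everything else (the comparison of local conditions in Step 1, the twisting in Step 3, Proposition \ref{prop:one} in Step 4) is routine given the machinery already set up.
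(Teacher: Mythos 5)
There is a genuine gap, and it comes from routing the argument through the wrong Selmer group. Your Step 1 asserts that finiteness of $\Sel_{\mathrm{BK}}(M^*/F)$ (equivalently of $\Sel_{\underline{I}}(M^*/F)$) yields, ``dually, twisting suitably,'' finiteness of $\Sel_{\underline{I}^c}(M_s/F_{\vec n})$ for the \emph{dual} representation $M$ with the \emph{complementary} signs at every layer $\vec n$. Nothing in the paper's machinery gives this: a comparison between the $(\underline{I}, M^*)$-side and the $(\underline{I}^c, M)$-side is exactly the content of an algebraic functional equation (cf.\ Remark \ref{rem:mithu} and Dion's Theorem A), which in the literature is proved \emph{assuming} the cotorsion you are trying to establish. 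The correct and much shorter route is to stay on the $(\underline{I}, M^*)$-side throughout: by Ponsinet's Lemma 1.11 one has the equality $\Sel_{\mathrm{BK}}(M^*/F)=\Sel_{\underline{I}}(M^*/F)$ (not merely agreement up to a finite group); the control theorem applies verbatim with $T$ replaced by $T^*$ and $\underline{I}^c$ by $\underline{I}$, since $T^*$ satisfies the same hypotheses (Tors.), (Fil.), (Slopes); hence $\Sel_{\underline{I}}(M^*/F_\infty)^{\Omega}$ is finite; and the generalized Nakayama lemma of Balister--Howson for $\Z_p[[\Z_p^2]]$ then says that finiteness of the coinvariants of the dual $\mathcal{X}_{\underline{I}}(M^*/F_\infty)$ at the single bottom layer already forces it to be $\Lambda(\Omega)$-torsion. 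In particular Steps 3--5 of your plan (twisting back, Proposition \ref{prop:one}, and the global-versus-local rank bookkeeping) are not needed at all.

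Two further points in your Step 2 would not survive scrutiny even granting Step 1. First, finiteness of $\Sel_{\underline{I}^c}(M_s/F_{\vec n})$ at every layer does not give a uniform bound on these orders, so you cannot conclude that $\mathcal{X}_{\underline{I}^c}(M_s/F_\infty)$ is finite, nor that it is finitely generated over $\Z_p$: over $\Z_p[[\Z_p^2]]$, finiteness of the coinvariants (even at all layers) only yields that the module is $\Lambda(\Omega)$-torsion, which is in fact all one needs --- and for that, finiteness at the bottom layer suffices. Second, the ``main obstacle'' you identify (matching the generic rank of $H^1(F_\Sigma/F_\infty,M^*)$ against the local terms to rule out positive corank) is a real obstacle for your route and is left unresolved; it is precisely the step the Nakayama argument makes unnecessary.
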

\begin{proof}
By \cite[Lemma 1.11]{ponsinet}, $\Sel_{\mathrm{BK}}(M^*/F) =\Sel_{\underline{I}}(M^*/F)$. By the control theorem \ref{thm:mainKIM}, $\Sel_{\underline{I}}(M^*/F_\infty)^\Omega$ is finite  and hence by Nakayama's lemma (see \cite{BalisterHowson}) we obtain that $\Sel_{\underline{I}}(M^*/F_\infty)$ is $\Lambda[[\Omega]]$-cotorsion. 
\end{proof}
\section{Preservation of vanishing of $\mu$-invariants under congruences }\label{sec: muinvariance}

The goal of this subsection is to deduce the vanishing of $\mu$-invariant associated to a motive $\mathcal{M}^\prime$ from that of the motive $\mathcal{M}$ in the case that their residual Galois representations are isomorphic. We first remind the reader of the definition of $\mu$-invariants, then state the theorem, before embarking on its proof.

\subsection{Some facts about $\mu$-invariants}\label{sec:mu}
We recall Howson's treatment of $\mu$-invariants in  \cite{Howson_02}. 

Let $G$ be a pro-$p$ $p$-adic Lie group without $p$-torsion. 
\begin{definition}{ \cite[equation 33]{Howson_02}}
    For a finitely generated  $\Z_p[[G]]$-module $N$, its $\mu$-invariant is 
$$\mu(N)=\sum_{i \geq 0}\rank_{\mathbb{F}_p[[G]]}\Big(p^i\big(N(p)\big)/p^{i+1}\Big),$$
where $N(p)$ is the submodule of $N$ consisting of its elements annihilated by some power of $p$.
\end{definition}
Denote by $N[p]$ the $p$-torsion of $N$.
\begin{lemma}\label{lemma:imp} For $N$ as in the above definition, we have

$$\rank_{\mathbb{Z}_p[[G]]}(N)=\rank_{\mathbb{F}_p[[G]]}(N/pN) - \rank_{\mathbb{F}_p[[G]]}(N[p]).$$
Further, $\mu(N)=0$ is equivalent to
$$\rank_{\mathbb{Z}_p[[G]]}(N)=\rank_{\mathbb{F}_p[[G]]}(N/pN).$$

\end{lemma}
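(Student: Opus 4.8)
The plan is to prove Lemma \ref{lemma:imp} by a standard structure-theory argument for finitely generated $\Z_p[[G]]$-modules, reducing everything to the exact sequence $0 \to N[p] \to N \xrightarrow{p} N \to N/pN \to 0$ and the behaviour of $\mathbb{F}_p[[G]]$-rank on it. Recall that since $G$ is a pro-$p$ $p$-adic Lie group without $p$-torsion, both $\Z_p[[G]]$ and $\mathbb{F}_p[[G]]$ are Noetherian Auslander-regular domains, so "rank" (generic rank over the quotient skew-field, or equivalently $\dim - 1$ worth of torsion considerations) is well-defined and additive on short exact sequences of finitely generated modules. That additivity is the only real input.

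First I would write down the tautological four-term exact sequence of $\Z_p[[G]]$-modules
\begin{equation*}
0 \longrightarrow N[p] \longrightarrow N \overset{\times p}{\longrightarrow} N \longrightarrow N/pN \longrightarrow 0,
\end{equation*}
all four terms finitely generated over $\Z_p[[G]]$ (the outer two are actually $\mathbb{F}_p[[G]]$-modules). Breaking it into two short exact sequences via the image $pN$, additivity of $\mathbb{F}_p[[G]]$-rank (applied to the short exact sequences $0 \to N[p] \to N \to pN \to 0$ tensored down, and $0 \to pN \to N \to N/pN \to 0$ tensored down — or more cleanly, additivity of $\Z_p[[G]]$-rank combined with the fact that $\mathbb{F}_p[[G]]$-rank of an $\mathbb{F}_p[[G]]$-module equals its $\Z_p[[G]]$-rank) gives
\begin{equation*}
\rank_{\Z_p[[G]]}(pN) = \rank_{\Z_p[[G]]}(N) - \rank_{\mathbb{F}_p[[G]]}(N[p]),
\end{equation*}
and
\begin{equation*}
\rank_{\Z_p[[G]]}(pN) = \rank_{\Z_p[[G]]}(N) - \rank_{\mathbb{F}_p[[G]]}(N/pN).
\end{equation*}
Here I use that multiplication by $p$ is an isomorphism $N/N[p] \xrightarrow{\sim} pN$, so $\rank_{\Z_p[[G]]}(pN) = \rank_{\Z_p[[G]]}(N)$ (as $N[p]$ is $p$-torsion, hence $\Z_p[[G]]$-torsion, hence of $\Z_p[[G]]$-rank zero); combining the displayed equalities — or rather, one needs to be a little careful and instead compare $\rank_{\mathbb{F}_p[[G]]}$ of the two outer terms against each other directly. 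The cleanest route: the sequence $0 \to N[p] \to N \xrightarrow{p} N \to N/pN \to 0$ shows $\rank_{\mathbb{F}_p[[G]]}(N[p]) - \rank_{\mathbb{F}_p[[G]]}(N/pN) + \big(\rank_{\Z_p[[G]]}N - \rank_{\Z_p[[G]]}N\big) = 0$ after tensoring with $\Q_p$-coefficient skew-fields is not quite right since the middle maps are not $\mathbb{F}_p[[G]]$-linear. So I would instead argue: let $N' = N/N(p)$ (the maximal $p$-torsion-free quotient); then $N'$ has no $p$-torsion, $\rank_{\Z_p[[G]]}(N') = \rank_{\Z_p[[G]]}(N)$, and the snake lemma applied to multiplication by $p$ on $0 \to N(p) \to N \to N' \to 0$ gives $N[p] = N(p)[p]$ and an exact sequence $0 \to N(p)/p \to N/p \to N'/p \to 0$ together with $N(p)[p] \cong N[p]$. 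One reduces to the two separate claims: (a) for the $p$-torsion-free module $N'$, $\rank_{\mathbb{F}_p[[G]]}(N'/pN') = \rank_{\Z_p[[G]]}(N')$ and $N'[p]=0$ — this follows because $N'$ embeds into a finitely generated free module up to pseudo-null error, or directly because for $p$-torsion-free $N'$ one has $\rank_{\mathbb{F}_p[[G]]}(N'/pN') = \rank_{\Z_p[[G]]}(N')$ by comparing with a free submodule $\Z_p[[G]]^r \hookrightarrow N'$ with $p$-torsion-free cokernel-up-to-pseudo-null; and (b) for the torsion module $N(p)$, $\rank_{\mathbb{F}_p[[G]]}(N(p)/p) = \rank_{\mathbb{F}_p[[G]]}(N(p)[p])$, which follows from the exact sequence $0 \to N(p)[p] \to N(p) \xrightarrow{p} N(p) \to N(p)/p \to 0$ since multiplication by $p$ on the torsion module $N(p)$ has image and kernel of complementary $\mathbb{F}_p[[G]]$-ranks — more precisely $N(p)/pN(p)$ and $N(p)[p]$ have equal $\mathbb{F}_p[[G]]$-rank because $pN(p) \cong N(p)/N(p)[p]$ and $\rank_{\mathbb{F}_p[[G]]}$ is additive on $0 \to pN(p) \to N(p) \to N(p)/p \to 0$ and on $0 \to N(p)[p] \to N(p) \to pN(p) \to 0$. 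Adding the ranks from (a) and (b) using the exact sequence $0 \to N(p)/p \to N/p \to N'/p \to 0$ yields $\rank_{\mathbb{F}_p[[G]]}(N/pN) = \rank_{\Z_p[[G]]}(N) + \rank_{\mathbb{F}_p[[G]]}(N[p])$, which is the first assertion.

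For the second assertion, once the identity $\rank_{\Z_p[[G]]}(N) = \rank_{\mathbb{F}_p[[G]]}(N/pN) - \rank_{\mathbb{F}_p[[G]]}(N[p])$ is established, the equivalence of $\mu(N)=0$ with $\rank_{\Z_p[[G]]}(N) = \rank_{\mathbb{F}_p[[G]]}(N/pN)$ amounts to showing $\mu(N)=0 \iff \rank_{\mathbb{F}_p[[G]]}(N[p]) = 0 \iff N[p]$ is pseudo-null. From the definition $\mu(N) = \sum_{i\geq 0}\rank_{\mathbb{F}_p[[G]]}\big(p^i N(p)/p^{i+1}N(p)\big)$: if $\mu(N)=0$ then in particular the $i=0$ term vanishes, i.e. $\rank_{\mathbb{F}_p[[G]]}(N(p)/pN(p))=0$, and by part (b) above $\rank_{\mathbb{F}_p[[G]]}(N(p)[p]) = \rank_{\mathbb{F}_p[[G]]}(N(p)/pN(p)) = 0$, and $N[p] = N(p)[p]$, giving $\rank_{\mathbb{F}_p[[G]]}(N[p])=0$. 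Conversely if $\rank_{\mathbb{F}_p[[G]]}(N[p])=0$, then again by (b) $\rank_{\mathbb{F}_p[[G]]}(N(p)/pN(p))=0$, and since $p^{i}N(p)/p^{i+1}N(p)$ is a subquotient of (a quotient of) $N(p)/pN(p)$ under multiplication-by-$p^i$ surjections — more carefully, $p^iN(p)/p^{i+1}N(p)$ is a quotient of $N(p)/pN(p)$ via $x \mapsto p^i x$ — each term in the sum defining $\mu(N)$ has $\mathbb{F}_p[[G]]$-rank zero, so $\mu(N)=0$. I expect the main obstacle to be stating the additivity of $\mathbb{F}_p[[G]]$-rank cleanly and handling the fact that $\times p$ is not $\mathbb{F}_p[[G]]$-linear — this is why the reduction to the $p$-torsion-free quotient $N'$ and the torsion part $N(p)$ separately is the right organizing principle; everything else is routine bookkeeping with the structure theory over the Auslander-regular ring $\mathbb{F}_p[[G]]$, for which I would cite \cite{Howson_02} directly (this is essentially his Lemma in the cited paper).
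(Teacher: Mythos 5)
Your argument is correct in substance, but it takes a more self-contained route than the paper, whose proof is essentially a citation: the rank identity is quoted verbatim as \cite[Cor.~1.10]{Howson_02}, and the equivalence of $\mu(N)=0$ with $\rank_{\mathbb{F}_p[[G]]}(N[p])=0$ is deduced from Howson's equations (42)--(43) (which relate the vanishing of $\rank_{\mathbb{F}_p[[G]]}\big((p^iN)[p]\big)$ to that of $\rank_{\mathbb{F}_p[[G]]}\big(p^iN(p)/p^{i+1}N(p)\big)$) together with the inclusions $(p^iN)[p]\subseteq N[p]$. You instead split $N$ into $N(p)$ and $N'=N/N(p)$ and re-derive these inputs by rank additivity; your observation that $p^iN(p)/p^{i+1}N(p)$ is a quotient of $N(p)/pN(p)$ via $x\mapsto p^ix$ handles the converse direction a little more directly than the paper's repeated appeal to (42)--(43) for each $i$. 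Two points need tightening. First, in your step (b) you invoke ``additivity of $\rank_{\mathbb{F}_p[[G]]}$'' on sequences whose terms $N(p)$ and $pN(p)$ are not $\mathbb{F}_p[[G]]$-modules, so that quantity is undefined as written; you should either induct on the exponent $k$ with $p^kN(p)=0$, or phrase the argument as additivity of $\mu$ on short exact sequences of $p$-power torsion modules (again a fact from \cite{Howson_02}). Second, your step (a) for the $p$-torsion-free quotient rests on a free submodule with torsion (``up to pseudo-null'') cokernel and on the equality $\rank_{\mathbb{F}_p[[G]]}(C/pC)=\rank_{\mathbb{F}_p[[G]]}(C[p])$ for torsion $C$; this is fine for the commutative case $G=\Omega\cong\Z_p^2$ actually used in the paper, but for a general pro-$p$ $p$-adic Lie group without $p$-torsion there is no structure theorem and one needs Howson's homological argument (his Cor.~1.10) anyway, as you concede at the end. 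With those points repaired the proof goes through, and it buys a largely self-contained verification of what the paper simply cites.
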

\begin{proof}
   The first statement is \cite[Cor. 1.10]{Howson_02}.
   
   For the second statement, first suppose $\mu(N)=0$. Then in particular, $$\rank_{\mathbb{F}_p[[G]]}(N(p)/pN(p))=0.$$ But $\rank_{\mathbb{F}_p[[G]]}(N(p)/pN(p))=0$ is equivalent to $\rank_{\mathbb{F}_p[[G]]}(N[p])=0$ by \cite[equations 42 and 43]{Howson_02}. 
   
   Conversely, suppose $\rank_{\mathbb{F}_p[[G]]}(N[p])=0$. We have $(p^iN)[p]\subseteq N[p],$ so that $\rank_{\mathbb{F}_p[[G]]}((p^iN)[p]))\leq   \rank_{\mathbb{F}_p[[G]]}(N[p])=0.$  %Also, $\rank_{\mathbb{Z}_p[[G]]}(N)=\rank_{\mathbb{Z}_p[[G]]}(p^iN)$.
   But we have again by \cite[equations 42 and 43]{Howson_02} that $\rank_{\mathbb{F}_p[[G]]}((p^iN)[p])=0$ is equivalent to $$\rank_{\mathbb{F}_p[[G]]}(p^iN(p)/p^{i+1}N(p))=0.$$ Thus, $\rank_{\mathbb{F}_p[[G]]}(N[p])=0$ implies $\rank_{\mathbb{F}_p[[G]]}(p^iN(p)/p^{i+1}N(p))=0$ for all $i$, which is the same as $\mu(N)=0$.
   %\rank_{\mathbb{F}_p[[G]]}((p^iN)[p]))=0$ %, then in particular the first term in the definition of the $\mu$-invariant vanishes, so that $\rank_{\mathbb{F}_p[[G]]}(N(p)/pN(p))=0$. For the converse, %Thus, by \cite[equations 42 and 43]{Howson_02}, we have $\rank_{\mathbb{F}_p[[G]]}(N[p])=0$. %To get the reverse implication, note that $\rank_{\mathbb{F}_p[[G]]}(N[p])=0$ implies $\rank_{\mathbb{F}_p[[G]]}(N(p)/pN(p))=0$ by reversing the above argument (Nakayama doesn't work?). 
\end{proof}

\subsection{The Theorem}\label{sec:tt}

Let $\mathcal{M}$ and $\mathcal{M}^\prime$ be motives with $G_F$-stable $\Z_p$-lattices $T$ and $T^\prime$ inside the the $p$-adic realizations satisfying the hypotheses \textbf{(H.-T.), (Cryst.), (Tors.), (Fil.), (Slopes).} %We will add a superscript $(\cdot)^\prime$ to various objects associated to $T$ to denote the corresponding object of $T^\prime$.
In this section, we make the following assumption.

\begin{center}

\noindent \textbf{(Cong.)} $T/pT \cong T^\prime/pT^\prime $ as $G_F$-representations.
\end{center}

%This is the condition we are going to use in the proof of the Theorem below.
 Let $\mathcal{X}:=\mathcal{X}_{\underline{I}}(M^*/F_\infty)$ be the Pontryagin dual of the Selmer group $\Sel_{\underline{I}}(M^*/F_\infty)$ and $\mathcal{X}^\prime$ that of $\Sel_{\underline{I}}(M^{\prime*}/F_\infty)$. Recall that $\Omega=\Zp^2$.
 \begin{theorem}\label{them:congruence_mu}
 Suppose that $\mathcal{X}$ and $\mathcal{X}^\prime$ have the same $\Zp[[\Omega]]$-rank $r$. Then $\mu(\mathcal{X})=0$ if and only if $\mu(\mathcal{X^\prime})=0$.
%}_{\underline{I}}(M^*/F_\infty)$ vanishes if and only if the $\mu$-invariant of 
%$\mathcal{X}_{\underline{I}}({M^\prime}^*/F_\infty)$ vanishes.
\end{theorem}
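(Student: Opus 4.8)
The strategy is to reduce the statement to comparison of $\mathbb{F}_p[[\Omega]]$-ranks of the residual Selmer groups, exploiting that $T/pT \cong T'/pT'$ as $G_F$-modules by \textbf{(Cong.)}. By Lemma \ref{lemma:imp}, since $\mathcal{X}$ and $\mathcal{X}'$ have the same $\Z_p[[\Omega]]$-rank $r$, the vanishing $\mu(\mathcal{X})=0$ is equivalent to $\rank_{\mathbb{F}_p[[\Omega]]}(\mathcal{X}/p\mathcal{X}) = r$, and similarly for $\mathcal{X}'$. So the goal becomes: show $\rank_{\mathbb{F}_p[[\Omega]]}(\mathcal{X}/p\mathcal{X}) = \rank_{\mathbb{F}_p[[\Omega]]}(\mathcal{X}'/p\mathcal{X}')$ under \textbf{(Cong.)}. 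Dually, $\mathcal{X}/p\mathcal{X}$ is the Pontryagin dual of $\Sel_{\underline{I}}(M^*/F_\infty)[p]$, so it suffices to compare the $\mathbb{F}_p[[\Omega]]$-coranks of $\Sel_{\underline{I}}(M^*/F_\infty)[p]$ and $\Sel_{\underline{I}}(M'^*/F_\infty)[p]$.

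The key step is to sandwich each $\Sel_{\underline{I}}(M^*/F_\infty)[p]$ between two ``residual Selmer groups'' defined purely in terms of the common residual representation $\bar{T} := T/pT \cong T'/pT'$. Concretely, I would introduce a residual Selmer group $\Sel_{\underline{I}}(\bar{T}^*(1)/F_\infty)$ cut out inside $H^1(F_\Sigma/F_\infty, \bar{T}^*(1))$ by residual local conditions: at the bad primes $w \nmid p$ the unramified condition reduces mod $p$, and at $\gq \mid p$ one takes the image mod $p$ of the signed local condition $H^1_{I_\gq}(F_{\infty,\gq},M^*)$ — equivalently, the orthogonal complement of the image of $\col_{T,I_\gq}^\infty \bmod p$, which depends only on $\bar{T}$ because the Coleman maps and Wach modules reduce mod $p$ compatibly (here one uses hypothesis \textbf{(Tors.)} and the integrality of the logarithmic matrices). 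Using the long exact sequence in cohomology associated to $0 \to M^*[p] \to M^* \xrightarrow{p} M^* \to 0$ (and \textbf{(Tors.)} to control $H^0$ and $H^2$ terms), one shows that $\Sel_{\underline{I}}(M^*/F_\infty)[p]$ and $\Sel_{\underline{I}}(\bar{T}^*(1)/F_\infty)$ differ only by finitely generated $\mathbb{F}_p$-modules at the finitely many bad primes, hence have the same $\mathbb{F}_p[[\Omega]]$-corank; the same holds for $M'^*$ with the same residual object. This forces $\rank_{\mathbb{F}_p[[\Omega]]}(\mathcal{X}/p\mathcal{X}) = \rank_{\mathbb{F}_p[[\Omega]]}(\mathcal{X}'/p\mathcal{X}')$, and combined with the equal $\Z_p[[\Omega]]$-rank hypothesis and Lemma \ref{lemma:imp}, this yields the theorem.

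The main obstacle I anticipate is verifying that the mod-$p$ reductions of the signed local conditions at $\gq \mid p$ genuinely depend only on $\bar{T}$ and not on the lattice $T$ — i.e. that the Coleman maps $\col_{T,I_\gq}^\infty$ and $\col_{T',I_\gq}^\infty$ induce the same submodule of $H^1(F_{\infty,\gq}, \bar{T}^*(1))$ after reduction. This requires tracking the construction through the Wach module / Yager module machinery of \cite{BL17} and \cite{LZ14}, using that $\mathbb{D}_{\mathrm{cris},\gq}(T) \bmod p$ and the logarithmic matrix $M_{T,\gq}$ behave compatibly under congruence (the entries of $C_{\phi,\gq}$, $C_\gq$ lie in $\mathrm{GL}_{g_\gq}(\Z_p)$, so reduce mod $p$), together with the hypotheses \textbf{(Fil.)}, \textbf{(Slopes)}, \textbf{(Tors.)} ensuring the relevant cohomology groups are well-behaved — this is the analogue, in our two-variable non-ordinary setting, of the corresponding lemma in Ponsinet \cite[proof of Theorem 3.13]{ponsinet}. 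A secondary technical point is bounding the discrepancy at the bad primes $w \nmid p$ uniformly, which follows from a standard argument with local Euler characteristics and the finiteness of $\Sigma$.
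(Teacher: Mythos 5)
Your proposal follows essentially the same route as the paper: reduce via Lemma \ref{lemma:imp} to comparing $\rank_{\mathbb{F}_p[[\Omega]]}(\mathcal{X}/p\mathcal{X})$ with $\rank_{\mathbb{F}_p[[\Omega]]}(\mathcal{X}^\prime/p\mathcal{X}^\prime)$, compare each $\Sel_{\underline{I}}(M^*/F_\infty)[p]$ with a Selmer group built from $M^*[p]\cong M'^*[p]$ whose conditions at $\gq\mid p$ depend only on the residual data via the mod-$p$ congruence of Wach modules and of the two-variable Coleman maps (through the Yager module), and treat the primes away from $p$ as negligible error, which the paper implements by passing to the $\Sigma_0$-non-primitive Selmer groups while you keep residual unramified conditions there. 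The one correction: the discrepancy at primes $w\nmid p$ is not a finite ($\mathbb{F}_p$-finitely generated) module in general, since the semi-local terms over $F_\infty$ are induced from decomposition groups of corank one in $\Omega\cong\Z_p^2$; it is, however, $\mathbb{F}_p[[\Omega]]$-cotorsion, which is all your corank comparison requires.
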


Let us first sketch the proof. For the sake of readability, we let $\Sel(M^*)$ denote $\Sel_{\underline{I}}(M^*/F_\infty)$ and $\Sel(M^{\prime*})$ denote $\Sel_{\underline{I}}(M^{\prime*}/F_\infty)$. We introduce auxiliary Selmer groups $\Sel^{\Sigma_0}(M^*[p])=\Sel_{\underline{I}}^{\Sigma_0}(M^*[p]/F_\infty)$ (and similarly $\Sel^{\Sigma_0}(M^{\prime*}[p])$), where the local condition is changed at certain ramified primes ($\Sigma$ stands for `set,' and the subscript $0$ indicates ramification.) 

The main idea is to set up two exact sequences and compare them via an isomorphism $(I)$ of $\Zp[[\Omega]]$-modules:

\begin{equation}\label{ideaofproof}
	\begin{tikzcd}
	0 \arrow[r] & \Sel(M^*)[p] \arrow[r] & \Sel^{\Sigma_0}(M^*[p]) \arrow[d, "(I)"]  \arrow[r] & \text{(cotorsion $\mathbb{F}_p[[\Omega]]$-module)} \\
	0 \arrow[r] & \Sel(M^{\prime*})[p] \arrow[r] & \Sel^{\Sigma_0}(M^{\prime*}[p])  \arrow[r] & \text{(cotorsion $\mathbb{F}_p[[\Omega]]$-module)}
	\end{tikzcd}
\end{equation}

%The proof of Theorem \ref{them:congruence_mu} will be complete (argument given below) if one can define an auxiliary Selmer group 
%$\Sel_{\underline{I}}^{\mathrm{Aux}}(-/F_\infty)$ with the following properties. 
%\begin{enumerate}[(I)]
%\item $\Sel_{\underline{I}}^{\mathrm{Aux}}(M^*[p]/F_\infty)=\Sel_{\underline{I}}^{\mathrm{Aux}}({M^\prime}^*[p]/F_\infty)$ and 
%\item there exists an injection $\Sel_{\underline{I}}(Y^*/F_\infty)[p] \hookrightarrow \Sel_{\underline{I}}^{\mathrm{Aux}}(Y^*[p]/F_\infty)$ with cokernel a cotorsion %$\mathbb{F}_p[[\Omega]]$-module for both $Y=M, {M^\prime}.$
%\end{enumerate}
%We show that conditions (I) and (II) are sufficient to prove Theorem \ref{them:congruence_mu}.
Recall that $\mathcal{X}$ denotes the Pontryagin dual of $\Sel(M^*)$. Similarly, let $\mathcal{Y}$ and $\mathcal{Y^\prime}$ be the Pontryagin duals of $\Sel^{\Sigma_0}(M^*[p])$ and $\Sel^{\Sigma_0}(M^{\prime*}[p])$. %Let $\mathcal{Y}_{\underline{I}}^{\mathrm{Aux}}(Y^*[p]/F_\infty)$ be the Pontryagin dual of $\Sel_{\underline{I}}^{\mathrm{Aux}}(Y^*[p]/F_\infty)$.
 Dualizing the first exact sequence gives
\begin{equation}\label{eq:fact2}
\rank_{\mathbb{F}_p[[\Omega]]}\big(\mathcal{Y}\big)= \rank_{\mathbb{F}_p[[\Omega]]}\big(\mathcal{X}/p\mathcal{X}\big).
\end{equation}
%\begin{equation}\label{eq:fact2}
%\rank_{\mathbb{F}_p[[\Omega]]}\big(\mathcal{Y}_{\underline{I}}^{\mathrm{Aux}}(Y^*[p]/F_\infty)\big)= \rank_{\mathbb{F}_p[[\Omega]]}\big(\mathcal{X}_{\underline{I}}%(Y^*/F_\infty)/p\mathcal{X}_{\underline{I}}(Y^*/F_\infty)\big).
%\end{equation}
Recall we denote by $r$ the $\Zp[[\Omega]]$-rank of $\mathcal{X}$. % is $\Zp[[\Omega]]$-torsion \blue{I don't think we need this assumption!}.
If  $\mu(\mathcal{X})=0$, then by Lemma \ref{lemma:imp},  %Now, if  the $\mu$-invariant of $\mathcal{X}_{\underline{I}}(M^*/F_\infty)$ vanishes, since we have assumed that $\mathcal{X}_{\underline{I}}(M^*/F_\infty)$ 
%is $\Lambda(\Omega)$-torsion, then by Remark \ref{remark:imp}, 
$$\rank_{\mathbb{F}_p[[\Omega]]}\big(\mathcal{X}/p\mathcal{X}\big)=r$$ and hence by \eqref{eq:fact2}, 
$\mathcal{Y}$ has $\mathbb{F}_p[[\Omega]]$-rank $r$ as well. 

We now use (I) to reverse our steps: By (I), $\mathcal{Y^\prime}$ has $\mathbb{F}_p[[\Omega]]$-rank $r$ as well. Dualizing the second exact sequence results in an analogue of \eqref{eq:fact2}, which gives us $$\rank_{\mathbb{F}_p[[\Omega]]}\big(\mathcal{X^\prime}/p\mathcal{X^\prime}\big)=r.$$
%$\mathcal{Y}_{\underline{I}}^{\mathrm{Aux}}(M^*[p]/F_\infty)$ is $\mathbb{F}_p[[\Omega]]$-torsion. This implies that $\mathcal{Y}_{\underline{I}}^{\mathrm{Aux}}({M^\prime}^*[p]/F_\infty)$ is $\mathbb{F}_p[[\Omega]]$-torsion as well by (I). Again applying \eqref{eq:fact2} for $Y=M^\prime$, we get  $$0=\rank_{\mathbb{F}_p[[\Omega]]}\big(\mathcal{X}_{\underline{I}}({M^\prime}^*/F_\infty)/p\mathcal{X}_{\underline{I}}({M^\prime}^*/F_\infty)\big).$$

%$$0=\rank_{\mathbb{F}_p[[\Omega]]}\big(\mathcal{X}_{\underline{I}}(M^*/F_\infty)/p\mathcal{X}_{\underline{I}}(Y^*/F_\infty)\big)$$ and hence by \eqref{eq:fact2}, $\mathcal{Y}_{\underline{I}}^{\mathrm{Aux}}(M^*[p]/F_\infty)$ is $\mathbb{F}_p[[\Omega]]$-torsion. This implies that $\mathcal{Y}_{\underline{I}}^{\mathrm{Aux}}({M^\prime}^*[p]/F_\infty)$ is $\mathbb{F}_p[[\Omega]]$-torsion as well by (I). Again applying \eqref{eq:fact2} for $Y=M^\prime$, we get  $$0=\rank_{\mathbb{F}_p[[\Omega]]}\big(\mathcal{X}_{\underline{I}}({M^\prime}^*/F_\infty)/p\mathcal{X}_{\underline{I}}({M^\prime}^*/F_\infty)\big).$$
%\blue{We assumed that $\mathcal{X^\prime}$ is a torsion $\Zp[[\Omega]]$-module --- I don't think we need this blue part}, so that
Since $\mathcal{X^\prime}$ was assumed to have rank $r$, we get %Therefore, since $\mathcal{X}_{\underline{I}}({M^\prime}^*/F_\infty)$ was assumed to be torsion $\Lambda[[\Omega]]$-module, we get 
$$\rank_{\Z_p[[\Omega]]}\big(\mathcal{X^\prime}\big)=\rank_{\mathbb{F}_p[[\Omega]]}\big(\mathcal{X^\prime}/p\mathcal{X^\prime}\big)=r.$$

%$$\rank_{\Z_p[[\Omega]]}\big(\mathcal{X}_{\underline{I}}({M^\prime}^*/F_\infty)\big)=\rank_{\mathbb{F}_p[[\Omega]]}\big(\mathcal{X}_{\underline{I}}({M^\prime}^*/F_\infty)/p\mathcal{X}_{\underline{I}}({M^\prime}^*/F_\infty)\big)=0.$$
Again by Lemma \ref{lemma:imp}, $\mu(\mathcal{X^\prime})=0$.

%Again by Remark \ref{remark:imp}, the $\mu$-invariant of $\mathcal{X}_{\underline{I}}({M^\prime}^*/F_\infty)$ vanishes.
%So the proof of Theorem \ref{them:congruence_mu} will be complete if we can define any auxiliary Selmer group satisfying (I) and (II). One such candidate  is constructed in the next section following \cite[Section 3.3]{ponsinet}.

\begin{corollary}\label{torsioncorollary}
 Suppose that $\mathcal{X}$ and $\mathcal{X}^\prime$ are $\Zp[[\Omega]]$-torsion. Then $\mu(\mathcal{X})=0$ if and only if $\mu(\mathcal{X^\prime})=0$.
%}_{\underline{I}}(M^*/F_\infty)$ vanishes if and only if the $\mu$-invariant of 
%$\mathcal{X}_{\underline{I}}({M^\prime}^*/F_\infty)$ vanishes.
\end{corollary}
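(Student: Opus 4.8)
The plan is to derive Corollary \ref{torsioncorollary} directly from Theorem \ref{them:congruence_mu}, so the only real work is to check that the hypothesis of the theorem, namely that $\mathcal{X}$ and $\mathcal{X}^\prime$ have the same $\Z_p[[\Omega]]$-rank, is automatically satisfied when both modules are assumed to be $\Z_p[[\Omega]]$-torsion. Under that assumption, $\rank_{\Z_p[[\Omega]]}(\mathcal{X}) = 0 = \rank_{\Z_p[[\Omega]]}(\mathcal{X}^\prime)$, so the two ranks coincide (with common value $r = 0$). Thus the hypothesis of Theorem \ref{them:congruence_mu} holds, and we may invoke it directly.

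Concretely, I would write: \emph{Proof.} Since $\mathcal{X}$ and $\mathcal{X}^\prime$ are both $\Z_p[[\Omega]]$-torsion, they have $\Z_p[[\Omega]]$-rank $0$; in particular they have the same rank $r = 0$. The conclusion is now immediate from Theorem \ref{them:congruence_mu}. $\qed$

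There is essentially no obstacle here: the corollary is a trivial specialization of the theorem, recording the case most relevant for applications (where the Selmer groups are expected to be cotorsion, e.g. under Conjecture \ref{conj:tors} or under the hypothesis of Corollary \ref{prop:check}). If one wanted to add a sentence of context, I would note that the torsion hypothesis on $\mathcal{X}$ and $\mathcal{X}^\prime$ holds, for instance, whenever the Bloch--Kato Selmer groups $\Sel_{\mathrm{BK}}(M^*/F)$ and $\Sel_{\mathrm{BK}}(M^{\prime*}/F)$ are finite, by Corollary \ref{prop:check}, so the corollary is not vacuous. The main point to be careful about is simply that "torsion" is the rank-zero case and that Theorem \ref{them:congruence_mu} was stated for \emph{any} common rank $r$, not only $r > 0$, so no separate argument is needed.
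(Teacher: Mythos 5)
Your proposal is correct and matches the paper's own argument, which likewise observes that the torsion hypothesis means both duals have $\Z_p[[\Omega]]$-rank $r=0$ and then invokes Theorem \ref{them:congruence_mu} directly. No further elaboration is needed.
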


\begin{proof}
    This is the theorem with $r=0$.
\end{proof}

\subsection{The auxiliary Selmer group, the isomorphism (I), and the exact sequences (\ref{ideaofproof}) }
Recall that what we need to do to make the proof work is to define the auxiliary Selmer group, establish the isomorphism (I), and prove the exactness in the diagram \ref{ideaofproof}. To this end, we generalize Ponsinet's construction, cf. \cite[Section 3.3]{ponsinet}.

\subsubsection{The auxiliary Selmer group}
Let $\Sigma_0 \subset \Sigma$ containing all the primes of ramification of $M^*$ but neither the primes of $F$ dividing $p$ nor the archimedean places. Let $\Sigma_0^\prime$ be the primes of $F_\infty$ above $\Sigma_0$. Recall that $\gq$ is the unique prime above $p$ in $F_\infty.$ From  \cite[Lemma 3.5.3]{MazurRubinKolyvaginsystems} and  proofs analogous to \cite[Lemma 3.5]{ponsinet}, we know that 
     $H^1(F_{\infty,\gq}, M^*[p]) \cong H^1(F_{\infty,\gq}, M^*)[p],$
    so that  
$$H^1_{I_\gq}(F_{\infty,\gq},M^*)[p]\subset H^1(F_{\infty,\gq},M^*[p]).$$
We set, just as Ponsinet did,
$$H^1_{I_\gq}(F_{\infty,\gq},M^*[p]):=H^1_{I_\gq}(F_{\infty,\gq},M^*)[p].$$
Thus, the following local condition is well-defined: 
$$\mathcal{P}_{\Sigma  \backslash \Sigma_0,\underline{I}}(M^*[p]/F_{\infty}):=\prod_{w \in \Sigma^\prime \backslash \Sigma_0^\prime, w \nmid p}H^1(F_{w,\mathrm{ur}}, M^*[p])\times \prod_{\gq \mid p}\frac{\bigoplus_{w \mid \gq}H^1(F_{\infty,w}, M^*[p])}{H^1_{I_\gq}(F_{\infty,\gq},M^*[p])}.$$
\begin{definition}
We define the \textit{$\Sigma_0$-non-primitive $\underline{I}$-Selmer group of $M^*[p]$ over $F_\infty$} as
$$\Sel_{\underline{I}}^{\Sigma_0}(M^*[p]/F_\infty):=\ker\Big(H^1(F_\Sigma/F_{\infty}, M^*[p]) \rightarrow \mathcal{P}_{\Sigma \backslash \Sigma_0,\underline{I}}(M^*[p]/F_{\infty})\Big).$$
We also use the abbreviated notation $\Sel^{\Sigma_0}(M^*[p])$.
    
\end{definition}

\subsubsection{The isomorphism (I)}\label{(I)}
We would like to prove that as $\mathbb{F}_p[[\Omega]]$-modules, $$\Sel^{\Sigma_0}(M^*[p]) \cong \Sel^{\Sigma_0}({M^\prime}^*[p]).  $$ under the hypothesis \textbf{(Cong.)}. But  \textbf{(Cong.)} implies that  $M^*[p] \cong {M^\prime}^*[p]$ as $G_F$-representations. Hence 
\begin{align*}  H^1(F_\Sigma/F_\infty, M^*[p])&\cong H^1(F_\Sigma/F_\infty, {M^\prime}^*[p]) \text{ and} \\
H^1(F_{w,\mathrm{ur}}, M^*[p])&\cong H^1(F_{w,\mathrm{ur}}, {M^\prime}^*[p]).
\end{align*}
Thus, all that remains is proving that $$H^1_{I_\gq}(F_{\infty,\gq},M^*[p])\cong H^1_{I_\gq}(F_{\infty,\gq},{M^\prime}^*[p]).$$
We prove that their duals are isomorphic: The Tate-pairing \eqref{TatePairing} gives that the Pontryagin dual of $H^1_{I_\gq}(F_{\infty,\gq},M^*[p])$ is $\im(\col_{T,I_\gq}^\infty)/p. $
Thus, our assertion follows from the two lemmas:
\begin{lemma}
Assume \textbf{(Cong.)} holds. Then as $\Z_p[[\Omega_p]]$-modules, $H^1_{\mathrm{Iw}}(F_{\infty,\gq},T)/p\cong H^1_{\mathrm{Iw}}(F_{\infty,\gq},T^\prime)/p.$
\end{lemma}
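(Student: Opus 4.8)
The plan is to reduce the claimed mod $p$ isomorphism $H^1_{\mathrm{Iw}}(F_{\infty,\gq},T)/p\cong H^1_{\mathrm{Iw}}(F_{\infty,\gq},T')/p$ to a statement about Wach modules, where the congruence hypothesis \textbf{(Cong.)} can be fed in cleanly. Recall from the proof of Proposition \ref{prop:rankIwasawa}(3) that, via \cite{BL21}, there is an isomorphism of $\Z_p[[\Omega_p]]$-modules
$$H^1_{\mathrm{Iw}}(F_{\infty,\gq},T)\cong \big(N_{\Q_p}(T)^{\psi=1}\big)^\Delta \widehat{\otimes} S_{L_\infty/\Q_p},$$
and similarly for $T'$. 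Since the Yager module $S_{L_\infty/\Q_p}$ is free of rank one over $\Lambda_{\Z_p}(U)$ (hence flat, so it commutes with reduction mod $p$), it suffices to prove $\big(N_{\Q_p}(T)^{\psi=1}\big)^\Delta/p\cong \big(N_{\Q_p}(T')^{\psi=1}\big)^\Delta/p$ as modules over $\Z_p[[\Gamma_{\mathrm{cyc}}]]$. Taking $\Delta$-invariants is exact here because $\#\Delta=p-1$ is prime to $p$, so this further reduces to comparing $N_{\Q_p}(T)^{\psi=1}/p$ with $N_{\Q_p}(T')^{\psi=1}/p$.

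First I would recall that the Wach module functor $T\mapsto N(T)$ is compatible with reduction mod $p$: under \textbf{(Cong.)}, $N_{\Q_p}(T)/pN_{\Q_p}(T)\cong N_{\Q_p}(T')/pN_{\Q_p}(T')$ compatibly with the Frobenius $\phi$ and the $\Gamma$-action (this is the defining property of Wach modules of $\Z_p$-representations, cf. Berger's theory as used in \cite{BL17,BL21}). Then I would pass to the $\psi=1$ submodule. The key point is that taking $\psi=1$ and reducing mod $p$ interact well: because the relevant cohomology groups are controlled by the hypothesis \textbf{(Tors.)} (which forces $H^0$ and $H^2$ of the mod $p$ representation to vanish), the complex $N_{\Q_p}(T)\xrightarrow{\psi-1}N_{\Q_p}(T)$ has no mod-$p$ cohomology except in the expected degree, so that $\big(N_{\Q_p}(T)^{\psi=1}\big)/p\cong \big(N_{\Q_p}(T)/p\big)^{\psi=1}$. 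Applying the same to $T'$ and using the mod $p$ Wach module isomorphism then yields the desired isomorphism of $\psi=1$ modules, compatibly with the $\Gamma_{\mathrm{cyc}}$-action, hence as $\Z_p[[\Gamma_{\mathrm{cyc}}]]$-modules.

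Tracing back through the reductions — tensoring with the (flat) Yager module and taking $\Delta$-invariants (exact since $p\nmid \#\Delta$) — recovers the statement for $H^1_{\mathrm{Iw}}(F_{\infty,\gq},T)/p$ as $\Z_p[[\Omega_p]]$-modules. The main obstacle I anticipate is the interchange of $(-)^{\psi=1}$ with reduction mod $p$: one needs to know that $\psi-1$ (or equivalently the operator computing Iwasawa cohomology) remains surjective after reducing mod $p$, equivalently that the relevant $H^2$-type obstruction vanishes mod $p$; this is exactly where \textbf{(Tors.)} is used, and I would cite the corresponding statement in \cite{BL21} (the proof of Lemma 2.16 there) rather than re-derive it. Everything else is formal: flatness of the Yager module and exactness of $\Delta$-invariants are standard, and the mod $p$ compatibility of Wach modules is built into their construction.
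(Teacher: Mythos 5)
Your proposal is correct and follows essentially the same route as the paper: both rest on Berger's mod-$p$ comparison of Wach modules giving $N_{\Q_p}(T)/p \cong N_{\Q_p}(T^\prime)/p$ under \textbf{(Cong.)} (note this is a theorem requiring the Hodge--Tate weights to lie in $[0,1]$, i.e. \textbf{(H.-T.)}, not a "defining property"), then the identification $H^1_{\mathrm{Iw}} \cong N_{\Q_p}(\cdot)^{\psi=1}$, and finally tensoring with the free rank-one Yager module and taking $\Delta$-invariants. The only differences are cosmetic: you peel off the Yager module and $\Delta$-invariants first, and you are more explicit than the paper about commuting $(-)^{\psi=1}$ with reduction mod $p$ via \textbf{(Tors.)}, a point the paper leaves implicit by deferring to Ponsinet's argument.
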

\begin{lemma}
Assume \textbf{(Cong.)} holds. If $z\in H^1_{\mathrm{Iw}}(F_{\infty,\gq},T)$ and $z^\prime \in H^1_{\mathrm{Iw}}(F_{\infty,\gq},T^\prime)$ have the same image under the isomorphism given in the above lemma, then the Coleman maps $\col_{T,i}^\infty(z)$ and $\col_{T^\prime,i}^\infty(z^\prime)$ are congruent modulo $p$.
\end{lemma}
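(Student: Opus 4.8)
The plan is to establish the two lemmas together, since the second builds on the explicit identification underlying the first. For the first lemma, I would start from the Wach-module description of $H^1_{\mathrm{Iw}}(F_{\infty,\gq},T)$ used in the proof of Proposition~\ref{prop:rankIwasawa}(3): namely $H^1_{\mathrm{Iw}}(F_{\infty,\gq},T)\cong \big(N_{\Q_p}(T)^{\psi=1}\big)^\Delta \,\widehat{\otimes}\, S_{L_\infty/\Q_p}$, and likewise for $T'$. Since $\widehat{\otimes}\, S_{L_\infty/\Q_p}$ is exact (the Yager module is free of rank one over $\Lambda_{\Q_p}(U)$), it suffices to show $\big(N_{\Q_p}(T)^{\psi=1}\big)^\Delta/p \cong \big(N_{\Q_p}(T')^{\psi=1}\big)^\Delta/p$ as $\Z_p[[\Gamma_\cyc]]$-modules, and then base-change. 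The key input is that \textbf{(Cong.)} gives $T/pT\cong T'/pT'$ as $G_{\Q_p}$-representations, hence an isomorphism of the associated Wach modules mod $p$: $N_{\Q_p}(T)/pN_{\Q_p}(T)\cong N_{\Q_p}(T'/pT') \cong N_{\Q_p}(T')/pN_{\Q_p}(T')$, compatibly with $\varphi$ and $\psi$ and the $\Delta$-action (this is functoriality of Wach modules, together with the fact that \textbf{(Tors.)}---i.e. $H^0(F_v,T/pT)=H^2(F_v,T/pT)=0$---makes the relevant cohomology well-behaved under reduction). Taking $\psi=1$-invariants and then $\Delta$-invariants commutes with reduction mod $p$ here because $\#\Delta = p-1$ is prime to $p$ and because, by \textbf{(Tors.)}, $H^1_{\mathrm{Iw}}(F_{\infty,\gq},T)$ is $p$-torsion-free (its torsion submodule is $T^{G_{F_\infty,\gq}}=0$ by Proposition~\ref{prop:rankIwasawa}(2) and the vanishing of $H^0$), so that $H^1_{\mathrm{Iw}}(\cdot,T)/p \hookrightarrow H^1_{\mathrm{Iw}}(\cdot,T/pT)$ and one gets the comparison on the nose rather than up to a Tor term.

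For the second lemma I would trace the congruence through the explicit construction of the Coleman maps in \eqref{two-variable-coleman}. Recall $\col_{T,i}^\infty$ is built by taking the inverse limit over $L_m$ of the one-variable maps $\col_{T,L_m,i}$ of Büyükboduk--Lei and then applying $y_{L_\infty/\Q_p}$; and the one-variable map is in turn obtained from the Perrin-Riou/big-logarithm map $\mathcal{L}_{T,\gq}^\infty$ via the logarithmic matrix $M_{T,\gq}$, whose entries are built from $C_{\phi,\gq}$, $C_\gq$ and cyclotomic polynomials. The point is that all of these constructions are functorial in $T$ and compatible with reduction mod $p$: the Wach module $N_{\Q_p}(T)$ and its Frobenius structure reduce mod $p$ to that of $N_{\Q_p}(T')$, so the matrices $C_{\phi,\gq}$ and $C_\gq$ (chosen via a Hodge-compatible basis) can be chosen congruent mod $p$, the logarithmic matrices $M_{T,\gq}$ and $M_{T',\gq}$ are then congruent mod $p$ (as limits of products of congruent matrices), and the big logarithm maps agree mod $p$ after the identification of the first lemma. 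Hence if $z$ and $z'$ correspond under the isomorphism of the first lemma, $\col_{T,i}^\infty(z)$ and $\col_{T',i}^\infty(z')$ have the same image in $\Z_p[[\Omega_p]]/p$. One should be a little careful that $M_{T,\gq}$ has entries in $\mathcal{H}(\Gamma_\cyc)$ rather than $\Lambda$, so ``mod $p$'' must be interpreted via the integral structure; but the product $\mathcal{L}_{T,\gq}^\infty = (u_1,\dots,u_{g_\gq})\cdot M_{T,\gq}\cdot(\col^\infty_{T,\gq,i})_i$ and the fact that $\col^\infty_{T,\gq,i}$ lands in the Iwasawa algebra means the congruence is genuinely a congruence of measures.

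The main obstacle, I expect, is making the ``everything is functorial and reduces mod $p$'' slogan precise at the level of Wach modules and the Coleman/Perrin-Riou machinery, i.e. checking that the choice of Hodge-compatible basis and the normalizations in \cite{BL17,LZ14} can be made compatibly for $T$ and $T'$ so that the matrices are literally congruent, not merely congruent up to an ambiguity. This is essentially the same mod-$p$ comparison of Coleman maps that Ponsinet carries out in \cite[Section~3.3]{ponsinet} over the cyclotomic tower and that Kim does in \cite{Kim09}; the new content here is only that one must also carry along the Yager-module factor $S_{L_\infty/\Q_p}$, which is harmless since it is free of rank one and the isomorphism $y_{L_\infty/\Q_p}$ is defined integrally and independently of $T$. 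So the strategy is: reduce to the one-variable cyclotomic statement via the Wach-module tensor decomposition, invoke (the proof of) Ponsinet's lemmas there, and then tensor back up with $S_{L_\infty/\Q_p}$.
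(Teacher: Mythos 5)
Your proposal is correct and takes essentially the same route as the paper: it identifies $H^1_{\mathrm{Iw}}(F_{\infty,\gq},T)$ with $\varprojlim_{L_m}H^1_{\mathrm{Iw}}(L_{m,\cyc},T)$, checks that the components $z_m$ and $z'_m$ correspond under the mod-$p$ isomorphism at each unramified layer, invokes Ponsinet's one-variable congruence of Coleman maps (his Proposition 3.4), and then passes to the inverse limit and tensors with the Yager module. The basis issue you flag is resolved exactly as you suggest, by fixing Hodge-compatible bases of $\mathbb{D}_{\mathrm{cris},\gq}(T)$ and $\mathbb{D}_{\mathrm{cris},\gq}(T')$ matching under the mod-$p$ Wach-module isomorphism.
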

See subsection \ref{congruencelemmas} for their proofs.

\subsubsection{The exact sequences (\ref{ideaofproof}})
We prove exactness of the top sequence, as the second sequence is exact for the same reasons.
Recall that we let $\Sigma_0 \subset \Sigma$ contain all the primes of ramification of $M^*$ but neither the primes of $F$ dividing $p$ nor the archimedean places. Let $\Sigma_0^\prime$ be the primes of $F_\infty$ above $\Sigma_0$.

 We would like to compare the auxiliary Selmer group $\Sel_{\underline{I}}^{\Sigma_0}(M^*[p]/F_\infty)$ to the ($p$-torsion of the) $\Sigma_0$-non-primitive $\underline{I}$-Selmer group %of $M^*$ over $F_\infty$
 , which we define by 

$$\Sel_{\underline{I}}^{\Sigma_0}(M^*/F_\infty)=\ker\Big(H^1(F_\Sigma/F_{\infty}, M^*) \rightarrow \mathcal{P}_{\Sigma \backslash \Sigma_0,\underline{I}}(M^*/F_{\infty})\Big).$$

Comparing their local conditions gives a map $$\theta: \Sel_{\underline{I}}(M^*/F_\infty) \xrightarrow{\theta}  \Sel_{\underline{I}}^{\Sigma_0}(M^*/F_\infty).$$
The map appearing in the exact sequence (\ref{ideaofproof}) is the $p$-torsion part of the map $\theta$ (which we will denote by $\theta_p$, composed with the isomorphism of the following lemma: %If we could show that $\coker(\theta[p])$ is $\mathbb{F}_p[[\Omega]]$-torsion, we would be done, in view of the lemma below. (REARRANGE)

\begin{lemma}

We have the isomorphism
\begin{equation}\label{eq:Selmcong}
\Sel_{\underline{I}}^{\Sigma_0}(M^*/F_\infty)[p]\cong 
\Sel_{\underline{I}}^{\Sigma_0}(M^*[p]/F_\infty) .
\end{equation}
\end{lemma}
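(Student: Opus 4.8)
The plan is to prove the isomorphism $\Sel_{\underline{I}}^{\Sigma_0}(M^*/F_\infty)[p]\cong \Sel_{\underline{I}}^{\Sigma_0}(M^*[p]/F_\infty)$ by comparing the defining kernel-of-restriction-to-local-conditions sequences for the two modules and applying the snake lemma. First I would recall that $M^*=T^*\otimes\Q_p/\Z_p$ is $p$-divisible, so the short exact sequence $0\to M^*[p]\to M^*\xrightarrow{\times p} M^*\to 0$ of $G_F$-modules gives, upon taking $F_\Sigma/F_\infty$-cohomology, an isomorphism $H^1(F_\Sigma/F_\infty,M^*[p])\cong H^1(F_\Sigma/F_\infty,M^*)[p]$; here one uses that $H^0(F_\infty,M^*)$ is $p$-divisible (equivalently that $H^0(F_\infty,M^*)/p=0$), which follows from hypothesis \textbf{(Tors.)} exactly as in the proof of Proposition \ref{prop:one} (where we showed $H^0(F,M)=0$, hence $H^0(F_\infty,M)=0$; the same reasoning applies to $M^*$). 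The analogous statement holds locally at each $w\in\Sigma'$: for $w\mid p$ this is precisely the identification $H^1(F_{\infty,\gq},M^*[p])\cong H^1(F_{\infty,\gq},M^*)[p]$ already invoked in \textsection\ref{(I)} (from \cite[Lemma 3.5.3]{MazurRubinKolyvaginsystems}), and for $w\nmid p$ one argues similarly using that $H^0(F_{\infty,w},M^*)$ is divisible.

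Next I would assemble the commutative diagram with exact rows
\begin{equation*}
\begin{tikzcd}
0 \arrow[r] & \Sel_{\underline{I}}^{\Sigma_0}(M^*/F_\infty)[p] \arrow[r] & H^1(F_\Sigma/F_\infty,M^*)[p] \arrow[r] & \mathcal{P}_{\Sigma\backslash\Sigma_0,\underline{I}}(M^*/F_\infty)[p] \\
0 \arrow[r] & \Sel_{\underline{I}}^{\Sigma_0}(M^*[p]/F_\infty) \arrow[r] \arrow[u] & H^1(F_\Sigma/F_\infty,M^*[p]) \arrow[r] \arrow[u,"\cong"] & \mathcal{P}_{\Sigma\backslash\Sigma_0,\underline{I}}(M^*[p]/F_\infty) \arrow[u]
\end{tikzcd}
\end{equation*}
where the top row is obtained by applying the left-exact functor $(-)[p]$ to the defining sequence of $\Sel_{\underline{I}}^{\Sigma_0}(M^*/F_\infty)$, and the bottom row is the defining sequence of $\Sel_{\underline{I}}^{\Sigma_0}(M^*[p]/F_\infty)$. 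The middle vertical arrow is the isomorphism just discussed. By the snake lemma, it suffices to show the right-hand vertical map $\mathcal{P}_{\Sigma\backslash\Sigma_0,\underline{I}}(M^*[p]/F_\infty)\to\mathcal{P}_{\Sigma\backslash\Sigma_0,\underline{I}}(M^*/F_\infty)[p]$ is injective. Working factor by factor: at $w\nmid p$ (with $w\notin\Sigma_0'$) the local term is $H^1(F_{w,\mathrm{ur}},M^*[p])$, and I would need the natural map to $H^1(F_{w,\mathrm{ur}},M^*)[p]$ to be injective, which again follows from the vanishing mod $p$ of $H^0(F_{w,\mathrm{ur}},M^*)$ (part of \textbf{(Tors.)}-type input); at $\gq\mid p$ the local term is $\bigl(\bigoplus_{w\mid\gq}H^1(F_{\infty,w},M^*[p])\bigr)/H^1_{I_\gq}(F_{\infty,\gq},M^*[p])$, and here we use the \emph{definition} $H^1_{I_\gq}(F_{\infty,\gq},M^*[p]):=H^1_{I_\gq}(F_{\infty,\gq},M^*)[p]$ together with the identification $\bigoplus_{w\mid\gq}H^1(F_{\infty,w},M^*[p])\cong\bigl(\bigoplus_{w\mid\gq}H^1(F_{\infty,w},M^*)\bigr)[p]$ to get, via the snake lemma applied to $0\to H^1_{I_\gq}(M^*)\to H^1(M^*)\to \mathrm{quotient}\to 0$ and its $[p]$-version, an injection of the quotient-$[p]$ into the $[p]$-of-quotient with controlled cokernel — in fact this local comparison is already implicit in the setup of \textsection\ref{(I)}.

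The main obstacle I anticipate is the careful bookkeeping at the primes $\gq\mid p$: the quotient $H^1(F_{\infty,\gq},M^*)/H^1_{I_\gq}(F_{\infty,\gq},M^*)$ need not be $p$-torsion-free, so the map from $\bigl(H^1/H^1_{I_\gq}\bigr)[p]$ of the $M^*[p]$-version to $\bigl(H^1/H^1_{I_\gq}\bigr)$-of-$M^*$-version-then-$[p]$ has a potential cokernel coming from $H^1_{I_\gq}(F_{\infty,\gq},M^*)/p$. However, one does not actually need surjectivity of the right vertical map — only injectivity — for the snake lemma to yield that the left vertical map is an isomorphism, since the top row is exact on the left (it is $(-)[p]$ of a short exact sequence, hence left-exact) and the bottom row is exact on the left by definition; so I would phrase the argument so that only the injectivity of each local comparison map is needed, and that injectivity reduces uniformly to the $p$-divisibility of $H^0$ of $M^*$ over the relevant local fields, i.e. to hypothesis \textbf{(Tors.)} propagated to $F_\infty$ and its completions. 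A secondary check is to confirm that $(-)[p]$ commutes with the relevant (possibly infinite) direct sums and direct limits defining $\mathcal{P}_{\Sigma\backslash\Sigma_0,\underline{I}}$ and $H^1_{\mathrm{ur}}(F_{\infty,w},-)$, which is routine since $[p]$-torsion is a filtered colimit of finite-level data and colimits commute with colimits.
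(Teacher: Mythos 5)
Your proposal is correct and takes essentially the same route as the paper: both reduce the claim to a term-by-term comparison of the global $H^1$ and of the local conditions via the multiplication-by-$p$ sequence $0\to M^*[p]\to M^*\to M^*\to 0$, using \textbf{(Tors.)} to control $H^0$ and the definition $H^1_{I_\gq}(F_{\infty,\gq},M^*[p]):=H^1_{I_\gq}(F_{\infty,\gq},M^*)[p]$ at the primes above $p$. Your remark that only injectivity of the comparison maps on the local terms is needed is just a slightly more explicit packaging of the same argument, which the paper delegates to the analogue of Ponsinet's Lemma 3.5.
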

\begin{proof}
We match the global term and the local conditions of the Selmer groups in question, i.e. this isomorphism reduces to
\begin{enumerate}
    \item $H^1(F_\Sigma/F_\infty, M^*[p]) \cong H^1(F_\Sigma/F_\infty, M^*)[p],$
    \item $H^1(F_{\infty,\gq}, M^*[p]) \cong H^1(F_{\infty,\gq}, M^*)[p]$,
    \item $H^1(F_{w,\mathrm{ur}}, M^*[p]) \cong H^1(F_{w,\mathrm{ur}}, M^*)[p]$ for any non-archimedean prime $w \in \Sigma_0^\prime$ not dividing $p$ or a prime of ramification of $M^*$, where $F_{w,\mathrm{ur}}$ is the maximal unramified extension of $F_w$.
\end{enumerate}

But these isomorphisms are induced by the exact sequence $$0 \rightarrow M^*[p] \rightarrow M^* \xrightarrow{p} M^* \rightarrow 0$$ of $G_{F_\infty}$-modules, analogously to  \cite[Lemma 3.5]{ponsinet}, see also  \cite[Lemma 3.5.3]{MazurRubinKolyvaginsystems}.

%First note that by  (iii) the local conditions of the Selmer groups $\Sel_{\underline{I}}^{\Sigma_0}(M^*[p]/F_\infty)$ and $\Sel_{\underline{I}}^{\Sigma_0}(M^*/F_\infty)[p]$ at non-archimedean primes  not dividing $p$ are  equivalent. At the places dividing $p$, by the definition of $H^1_{I_\gq}(F_{\infty,\gq},M^*[p])$, the local conditions of the above two Selmer groups are also the same. Hence using (i) we get the isomorphism
%\begin{equation}\label{eq:Selmcong}
%\Sel_{\underline{I}}^{\Sigma_0}(M^*[p]/F_\infty) \cong \Sel_{\underline{I}}^{\Sigma_0}(M^*/F_\infty)[p].
%\end{equation}
\end{proof}
\begin{lemma}
    As a  $\mathbb{Z}_p[[\Omega]]$-module, the Pontryagin dual of $\coker(\theta)$ is torsion. 
\end{lemma}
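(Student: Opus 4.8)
The plan is to identify $\coker(\theta)$ with a product of local cohomology groups at the primes in $\Sigma_0'$ and then show each such local factor is a cotorsion $\Z_p[[\Omega]]$-module. Precisely, since $\theta: \Sel_{\underline{I}}(M^*/F_\infty) \to \Sel_{\underline{I}}^{\Sigma_0}(M^*/F_\infty)$ is induced by enlarging the set of primes at which we relax the local condition, the snake lemma applied to the two defining exact sequences expresses $\coker(\theta)$ as a submodule of $\prod_{w \in \Sigma_0'} H^1(F_{\infty,w}, M^*)/H^1_{\mathrm{ur}}(F_{\infty,w}, M^*)$. (More carefully: $\coker(\theta)$ surjects onto the image of $H^1(F_\Sigma/F_\infty, M^*)$ inside this product of quotients, modulo the contribution of the primitive Selmer group, so in any case it is a subquotient of this finite product of local terms.) Thus it suffices to prove that for each non-archimedean $w \in \Sigma_0'$ with $w \nmid p$, the quotient $H^1(F_{\infty,w}, M^*)/H^1_{\mathrm{ur}}(F_{\infty,w}, M^*)$ is $\Z_p[[\Omega]]$-cotorsion.

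First I would reduce to studying a single prime $w$ lying above some $v \in \Sigma_0$. Let $\Omega_w \subseteq \Omega$ be the decomposition group; since $v \nmid p$ and $F_\infty/F$ is unramified at $v$, the group $\Omega_w$ is a quotient of $\hat{\Z}$ topologically generated by Frobenius, hence procyclic, and in fact $\Omega_w \cong \Z_p$ (or is finite, in which case the local term is trivially cotorsion). By Shapiro's lemma the semilocal term $\bigoplus_{w \mid v} H^1(F_{\infty,w}, M^*)$ is an induced module from $\Omega_w$ to $\Omega$, so it is cotorsion over $\Z_p[[\Omega]]$ if and only if $H^1(F_{\infty,w}, M^*)$ is cotorsion over $\Z_p[[\Omega_w]] \cong \Z_p[[\Z_p]] = \Lambda$. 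This is the standard kind of statement: over the cyclotomic (or here, unramified) $\Z_p$-extension, the local cohomology $H^1(F_{\infty,w}, M^*)$ at a prime not above $p$ is a cofinitely generated cotorsion $\Lambda$-module — its Pontryagin dual is finitely generated and torsion over $\Lambda$ — because the "unramified part" accounts for the free rank. Concretely, the quotient $H^1(F_{\infty,w}, M^*)/H^1_{\mathrm{ur}}(F_{\infty,w}, M^*)$ injects (via the local Euler characteristic formula / the description of $H^1_{\mathrm{ur}}$ as the kernel of restriction to the inertia-fixed part) into a group controlled by $H^0$ of the inertia-coinvariants, which is finite after passing to $F_\infty$ by hypothesis \textbf{(Tors.)} applied to $T^*$, or more robustly is cofinitely generated and cotorsion. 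I would cite the analogous computation in Ponsinet \cite[Lemma 1.6]{ponsinet} or in \cite{Dion2022} and adapt it, noting that the unramified $\Z_p$-extension plays here the role the cyclotomic one plays there.

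The key steps in order: (1) write down the snake-lemma diagram comparing the sequences defining $\Sel_{\underline{I}}(M^*/F_\infty)$ and $\Sel_{\underline{I}}^{\Sigma_0}(M^*/F_\infty)$, both of which are exact (the second by Proposition~\ref{prop:one} and the surjectivity argument there, the first similarly), and read off that $\coker(\theta)$ is a subquotient of $\prod_{w \in \Sigma_0'} H^1(F_{\infty,w}, M^*)/H^1_{\mathrm{ur}}(F_{\infty,w}, M^*)$; (2) use Shapiro to pass from the semilocal term to a single $H^1(F_{\infty,w}, M^*)$ viewed as an $\Omega_w$-module, with $\Omega_w \cong \Z_p$; (3) invoke the local structure theory: $\varprojlim_n H^1(F_{n,w}, T^*)$ is a finitely generated torsion $\Lambda(\Omega_w)$-module when $w \nmid p$ (Perrin-Riou / Greenberg), so dualizing, $H^1(F_{\infty,w}, M^*)$ is cotorsion, and a fortiori so is its quotient by $H^1_{\mathrm{ur}}$; (4) conclude that $\coker(\theta)$, being a subquotient of a finite direct sum of cotorsion $\Z_p[[\Omega]]$-modules, is itself cotorsion, i.e. its Pontryagin dual is torsion. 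The main obstacle I anticipate is step (3): making sure the vanishing/cotorsion statement for local $H^1$ at primes away from $p$ holds over the full $\Z_p^2$-extension rather than just over the $\Z_p$-line $F_{\infty,w}/F_v$ — but since $F_\infty/F$ is unramified at such $v$, the decomposition group in $\Omega$ is procyclic, so the two-variable situation collapses to the one-variable one and the classical result applies verbatim; the only genuine care needed is checking that no finite prime of $\Sigma_0$ splits completely in $F_\infty$ in a way that would make $\Omega_w$ trivial and the module merely finitely generated rather than cotorsion — but a finite module is cotorsion over $\Z_p[[\Omega]]$, so even that edge case is harmless.
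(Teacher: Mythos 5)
Your proposal is correct and follows essentially the same route as the paper: apply the snake lemma to the two defining sequences to identify $\coker(\theta)$ with a subquotient (in the paper, $\ker(h_{\Sigma_0})\cap\im(\delta)$) of $\prod_{w\in\Sigma_0'}H^1(F_{\infty,w},M^*)/H^1_{\mathrm{ur}}(F_{\infty,w},M^*)$, and then use that at primes $w\nmid p$ the local extension is the unramified $\Z_p$-extension, so $H^1(F_{\infty,w},M^*)$ is cotorsion over the one-variable Iwasawa algebra (Greenberg) while the unramified condition contributes nothing, whence the whole local term and hence $\coker(\theta)$ is $\Z_p[[\Omega]]$-cotorsion. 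Your Shapiro-lemma reduction and the remark on possibly trivial decomposition groups are harmless refinements of the same argument, not a different method.
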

\begin{proof}
By Proposition \ref{prop:one}, we have the following fundamental diagram.

\begin{equation}\label{fundamental1}
	\begin{tikzcd}
	0 \arrow[r] & \Sel_{\underline{I}}(M^*/F_\infty) \arrow[d, "\theta"] \arrow[r] &  H^1(F_\Sigma/F_\infty,M^*)\arrow[d, "="]  \arrow[r, "\delta"] & \mathcal{P}_{\Sigma,\underline{I}}(M^*/F_\infty) \arrow[d, "h_{\Sigma_0}"] \\
		0 \arrow[r] & \Sel_{\underline{I}}^{\Sigma_0}(M^*/F_\infty)\arrow[r] &  H^1(F_\Sigma/F_\infty,M^*)\arrow[r] & \mathcal{P}_{\Sigma \backslash \Sigma_0,\underline{I}}(M^*/F_{\infty}).
	\end{tikzcd}
	\end{equation}
By the snake lemma we have an injection $\Sel_{\underline{I}}(M^*/F_\infty) \hookrightarrow \Sel_{\underline{I}}^{\Sigma_0}(M^*/F_\infty)$ 
with cokernel isomorphic to $\ker(h_{\Sigma_0}) \cap \im(\delta)$, where $h_{\Sigma_0}$ is the projection map on local conditions. (Note that  $\mathcal{P}_{\Sigma \backslash \Sigma_0,\underline{I}}(M^*/F_{\infty})$ has the same local components as $\mathcal{P}_{\Sigma,\underline{I}}(M^*/F_{\infty})$ but removing the places in $\Sigma_0$.) Now,

$$\ker(h_{\Sigma_0})=\prod_{w \in \Sigma_0^\prime}\frac{H^1(F_{\infty,w}, M^*)}{H^1_{\mathrm{ur}}(F_{\infty,w}, M^*)}.$$
Since $w \in \Sigma_0^\prime$, $w$ is not above $p$ and hence $F_{\infty,w}$ is the unique unramified $\Z_p$-extension of $F_v$ where $v$ is a prime of $F$ below $w$ (see \cite[Section 5.2]{LeiSujatha}). In this case $H^1(F_{\infty,w}, M^*)$ is a cotorsion $\Z_p[[\Gamma]]$-module (see \cite[Proposition 2]{greenberg89}) and $H^1_{\mathrm{ur}}(F_{\infty,w}, M^*)$ is trivial (see \cite[Section A.2.4]{PR95}).
Hence $\ker(h_{\Sigma_0})$ is a cotorsion $\Zp[[\Omega]]$-module and therefore it is so for  $\ker(h_{\Sigma_0}) \cap \im(\delta)=\coker(\theta)$.  Hence the conclusion.

\end{proof}
%So $\coker(\theta)[p]$ is a cotorsion $\mathbb{F}_p[[\Omega]]$-module.
Multiply the nonzero terms in the following exact sequence by $p$:
$$0 \rightarrow \Sel_{\underline{I}}(M^*/F_\infty) \xrightarrow{\theta}  \Sel_{\underline{I}}^{\Sigma_0}(M^*/F_\infty) \rightarrow \coker(\theta) \rightarrow 0, $$
Applying the snake lemma to this multiplication by $p$, we get the exact sequence 
$$0 \rightarrow \Sel_{\underline{I}}(M^*/F_\infty)[p] \xrightarrow{\theta_p} \Sel_{\underline{I}}^{\Sigma_0}(M^*/F_\infty)[p]\rightarrow \coker(\theta)[p] \rightarrow \Sel_{\underline{I}}(M^*/F_\infty)/p.$$
We can quotient out the first nonzero term and its image under $\theta_p$ to see that $\coker(\theta_p)$ injects into $\coker(\theta)[p]$. Thus, the Pontryagin dual of $\coker(\theta_p)$ is a quotient of the dual of $\coker(\theta)[p]$, which is $\mathbb{F}_p[[\Omega]]$-torsion by the previous lemma. Hence the Pontryagin dual of $\coker(\theta_p)$ is also $\mathbb{F}_p[[\Omega]]$-torsion. 
Thus, combining $\theta_p$ with the isomorphism  \eqref{eq:Selmcong}, we obtain  an injection 
$$\Sel_{\underline{I}}(M^*/F_\infty)[p] \hookrightarrow \Sel_{\underline{I}}^{\Sigma_0}(M^*[p]/F_\infty)$$ with cokernel a cotorsion $\mathbb{F}_p[[\Omega]]$-module; this is exactly what was required.

\subsubsection{Congruence lemmas in Iwasawa algebras}\label{congruencelemmas}
Recall that $\gq$ is the unique prime above $p$ in $F_\infty.$ We complete the outstanding proof of the two last lemmas in subsection \ref{(I)}, which may be of independent interest.
\begin{lemma}\label{lem: modp}
Assume \textbf{(Cong.)} holds. Then as $\Z_p[[\Omega_p]]$-modules, $H^1_{\mathrm{Iw}}(F_{\infty,\gq},T)/p\cong H^1_{\mathrm{Iw}}(F_{\infty,\gq},T^\prime)/p.$
\end{lemma}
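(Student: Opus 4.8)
The plan is to reduce the statement to a computation with Wach modules, since Proposition~\ref{prop:rankIwasawa}(3) already expresses $H^1_{\mathrm{Iw}}(F_{\infty,\gq},T)$ in terms of such data. First I would recall from the proof of that proposition the identification
$$
H^1_{\mathrm{Iw}}(F_{\infty,\gq},T)\cong \big(N_{\Q_p}(T)^{\psi=1}\big)^\Delta \,\widehat{\otimes}\, S_{L_\infty/\Q_p},
$$
where $N_{\Q_p}(T)$ is the Wach module of $T$, $\psi$ is Fontaine's left inverse of $\phi$, and $\Delta=\Gal(L_\infty(\mu_{p^\infty})/k_\infty)$ has order prime to $p$. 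Since $S_{L_\infty/\Q_p}$ is free of rank one over $\Lambda_{\Z_p}(U)$ (see Section~\ref{sec Yager}), reducing mod $p$ commutes with the $\widehat{\otimes}$; and since $|\Delta|$ is prime to $p$, taking $\Delta$-invariants is exact and also commutes with reduction mod $p$. So the claim reduces to showing
$$
N_{\Q_p}(T)^{\psi=1}/p \;\cong\; N_{\Q_p}(T^\prime)^{\psi=1}/p
$$
as modules over the appropriate Iwasawa algebra, compatibly with the structures.

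The key input is that the Wach module functor is compatible with reduction mod $p$: the hypothesis \textbf{(Cong.)} gives $T/pT\cong T^\prime/pT^\prime$ as $G_{F_\gq}=G_{\Q_p}$-representations, and by the theory of Wach modules (Berger, and as used in \cite{BL17}, \cite{BL21}) one has $N_{\Q_p}(T)/pN_{\Q_p}(T)\cong N_{\Q_p}(T/pT)$ functorially, compatibly with $\phi$, $\psi$, and the $\Gamma$-action. Hence $N_{\Q_p}(T)/p\cong N_{\Q_p}(T^\prime)/p$ as $(\phi,\psi,\Gamma)$-modules over $\mathbf{A}_{\Q_p}^+/p$. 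The remaining point is that forming $\psi=1$ invariants is compatible with this: from the exact sequence $0\to N(T)\xrightarrow{p} N(T)\to N(T)/p\to 0$ and the snake lemma applied to $\psi-1$, the cokernel of $N(T)^{\psi=1}/p \to (N(T)/p)^{\psi=1}$ injects into $(N(T)/pN(T))$-torsion coming from $N(T)^{\psi=1}[p]$-type terms; one checks using \textbf{(Tors.)} (which forces $H^0(F_\gq,T/pT)=H^2(F_\gq,T/pT)=0$, hence $N(T)^{\psi=1}$ is $p$-torsion-free and the relevant $H^1$ is a ``nice'' module) that these error terms vanish. This identifies $N_{\Q_p}(T)^{\psi=1}/p$ with $(N_{\Q_p}(T)/p)^{\psi=1}$, and symmetrically for $T^\prime$, giving the desired isomorphism.

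An alternative, perhaps cleaner, route avoids Wach modules by working directly with Iwasawa cohomology: by \textbf{(Tors.)} and the inflation--restriction sequence, $H^2_{\mathrm{Iw}}(F_{\infty,\gq},T)$ and $H^0_{\mathrm{Iw}}(F_{\infty,\gq},T)$ vanish (or are at least $p$-torsion-free/trivial), so the short exact sequence $0\to T\xrightarrow{p} T\to T/pT\to 0$ yields $H^1_{\mathrm{Iw}}(F_{\infty,\gq},T)/p\cong H^1_{\mathrm{Iw}}(F_{\infty,\gq},T/pT)$, and the right-hand side depends only on $T/pT$ as a $G_{F_\gq}$-module; applying the same to $T^\prime$ and invoking \textbf{(Cong.)} finishes the argument. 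I would present this second argument as the main line and mention the Wach-module description only insofar as it is needed to make the $\Z_p[[\Omega_p]]$-module structure (via the Yager module twist) explicit and to see that the isomorphism is one of $\Z_p[[\Omega_p]]$-modules rather than merely of abelian groups. The main obstacle I anticipate is the bookkeeping to ensure the isomorphism respects the full $\Z_p[[\Omega_p]]$-structure --- i.e., that the identification $H^1_{\mathrm{Iw}}(F_{\infty,\gq},T)/p\cong H^1_{\mathrm{Iw}}(F_{\infty,\gq},T/pT)$ is $\Omega_p$-equivariant and that the semilocal/Yager-module decompositions on both sides match up --- together with verifying that \textbf{(Tors.)} genuinely gives the vanishing of the obstruction cohomology over the infinite extension $F_{\infty,\gq}$ (one uses that $\Gal(F_{\infty,\gq}/F_\gq)$ is pro-$p$ of dimension $2$ and that the relevant $H^0$ vanishes already at finite level, propagating upward via orbit--stabilizer as in the proof of Theorem~\ref{thm:mainKIM}).
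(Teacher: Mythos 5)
Your proposal is correct, but the line you choose as your main argument is genuinely different from the paper's. The paper proves the lemma entirely through Wach modules: since the Hodge--Tate weights lie in $[0,1]$ and both representations are crystalline, Berger's Th\'eor\`eme IV.1.1 applied under \textbf{(Cong.)} gives $N_{\Q_p}(T)/pN_{\Q_p}(T)\cong N_{\Q_p}(T^\prime)/pN_{\Q_p}(T^\prime)$ compatibly with the filtration, the Galois action and $\varphi$; combining this with $H^1_{\mathrm{Iw}}(F_\gq(\mu_{p^\infty}),T)\cong N_{\Q_p}(T)^{\psi=1}$, then tensoring with the Yager module and taking $\Delta$-invariants, yields the statement --- essentially the first route you sketch. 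Your preferred alternative, namely that \textbf{(Tors.)} (propagated up the pro-$p$ tower) together with local Tate duality kills $H^2_{\mathrm{Iw}}(F_{\infty,\gq},T)[p]$, so that the long exact sequence of $p$-multiplication gives a $\Lambda(\Omega_p)$-equivariant identification $H^1_{\mathrm{Iw}}(F_{\infty,\gq},T)/p\cong H^1_{\mathrm{Iw}}(F_{\infty,\gq},T/pT)$ depending only on $T/pT$, is a valid and in fact more elementary proof of the lemma as stated: it avoids Berger's theorem and does not even use \textbf{(Cryst.)} or \textbf{(H.-T.)}. What the paper's route buys, and what yours does not, is the structure-compatible isomorphism \eqref{eq:Wach} of Wach modules mod $p$ (respecting $\varphi$ and the filtration), which is exactly what is needed immediately afterwards: the paper uses the identification $\mathbb{D}_{\cris,\gq}(T)=N_{\Q_p}(T)/pN_{\Q_p}(T)$ to choose matching Hodge-compatible bases and to deduce the congruence of Coleman maps in Lemma \ref{lem:conImage} (via Ponsinet's Proposition 3.4, which itself rests on the same Berger input). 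So if you present the cohomological argument as the main line, be aware that the Wach-module congruence must still be established for the subsequent lemma; also your snake-lemma discussion of why $\psi=1$ commutes with reduction mod $p$ is garbled as written (the obstruction is controlled by $p$-torsion in $N(T)/(\psi-1)$, i.e.\ in $H^2_{\mathrm{Iw}}$, which is where \textbf{(Tors.)} enters), though this is the same point your cohomological argument handles cleanly.
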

\begin{proof} Let $\mathbf{A}_{\Q_p}^+$ be the ring $\Z_p[[\pi]]$ equipped with a semilinear action of the Frobenius $ \varphi$ acting as the absolute Frobenius on $\Z_p$ and on the uniformizer $\pi$  as $\varphi(\pi)=(\pi+1)^p-1$ with a Galois action given by $g(\pi)=(\pi+1)^{\chi(g)}-1$ where $g \in \Gal(\Q_p(\mu_{p^\infty})/\Q_p)$. Then the (cyclotomic) Wach module $N_{\Q_p}(T)$ is equipped with a filtration 
$$\mathrm{Fil}^i(N_{\Q_p}(T))=\{x \in N_{\Q_p}(T), \varphi(x) \in (\varphi(\pi)/\pi)^i N_{\Q_p}(T)\}.$$
Since the Hodge-Tate weights of $T$ and $T^\prime$ are in $[0,1]$, under \textbf{(Cong.)}, from \cite[Théorème IV.1.1]{berger04}, we deduce that
\begin{equation}\label{eq:Wach}
N_{\Q_p}(T)/pN_{\Q_p}(T) \cong N_{\Q_p}(T^\prime)/pN_{\Q_p}(T^\prime)
\end{equation}

as  $\mathbf{A}_{\Q_p}^+$-modules and the isomorphism is compatible with the filtration, the Galois action and the action of the Frobenius operator $\varphi$. Since
$H^1_{\mathrm{Iw}}(F_{\gq}(\mu_{p^\infty}),T) \cong N_{\Q_p}(T)^{\psi=1}$ as
Galois modules, we have the congruence of the first Iwasawa cohomology groups
$$H^1_{\mathrm{Iw}}(F_{\gq}(\mu_{p^\infty}),T)/p \cong H^1_{\mathrm{Iw}}(F_{\gq}(\mu_{p^\infty}),T^\prime)/p. $$
The proof follows by tensoring with the Yager module (cf. Section \ref{sec Yager}) and taking the $\Delta$-invariance of
$$H^1_{\mathrm{Iw}}(L_\infty(\mu_{p^\infty}),T)\cong N_{L_\infty}(T)^{\psi=1}\cong N_{\Q_p}(T)^{\psi=1} \widehat{\otimes} S_{L_\infty/\Q_p} \cong H^1_{\mathrm{Iw}}(F_\gq(\mu_{p^\infty}),T) \widehat{\otimes} S_{L_\infty/\Q_p}.$$
\end{proof}

In the last lemma, we  use the identification $\mathds{D}_{\mathrm{cris},\gq}(T)=N_{\Q_p}(T)/pN_{\Q_p}(T)$ and fix Hodge-compatible bases (see Section \ref{sub:HC}) of $\mathds{D}_{\mathrm{cris},\gq}(T)$ and $\mathds{D}_{\mathrm{cris},\gq}(T^\prime)$ such that their images are the same under the isomorphism 
\eqref{eq:Wach}.
%\green{Yes, we need this for lemma 6.11 below. Because the definition of Coleman maps at prime $\gq$ is dependent on a Hodge compatible basis of the associated Dieudonne module. If you change the basis, then the Coleman maps and the Selmer groups will change. See the discussion after eq(18) of \cite{ponsinet}. See also page 20 of https://arxiv.org/pdf/2112.00280.pdf}  Thanks!

\begin{lemma}\label{lem:conImage}
Assume \textbf{(Cong.)} holds. If $z\in H^1_{\mathrm{Iw}}(F_{\infty,\gq},T)$ and $z^\prime \in H^1_{\mathrm{Iw}}(F_{\infty,\gq},T^\prime)$ have the same image under the isomorphism given in Lemma \ref{lem: modp}, then the Coleman maps $\col_{T,i}^\infty(z)$ and $\col_{T^\prime,i}^\infty(z^\prime)$ are congruent modulo $p$.
\end{lemma}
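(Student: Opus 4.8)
The plan is to trace through the construction of the two-variable Coleman maps in \eqref{two-variable-coleman} and \eqref{eq:coleman} and check that every step is compatible with reduction modulo $p$, given the compatibility already established in Lemma \ref{lem: modp} and in the identity \eqref{eq:Wach}. Recall that $\col_{T,i}^\infty$ is obtained from the cyclotomic Coleman maps $\col_{T,L_m,i}$ of \cite{BL17} by passing to the inverse limit over the unramified layers $L_m$ and then tensoring with the Yager module $S_{L_\infty/\Q_p}$. So it suffices to check the mod-$p$ compatibility of the cyclotomic Coleman maps $\col_{T,\gq,i}^{\cyc}$, since tensoring with the Yager module is a base change that commutes with reduction modulo $p$ (the Yager module is free of rank one over $\Lambda_{\Q_p}(U)$, cf. Section \ref{sec Yager}), and the identification $H^1_{\mathrm{Iw}}(F_{\infty,\gq},T)\cong H^1_{\mathrm{Iw}}(F_\gq(\mu_{p^\infty}),T)\widehat\otimes S_{L_\infty/\Q_p}$ used in Lemma \ref{lem: modp} is the same one through which the Coleman map factors.

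First I would recall from \cite[Section 2.2]{BL17} that the cyclotomic Coleman maps are built out of the Perrin-Riou--style big logarithm/regulator map, which is defined entirely in terms of the Wach module $N_{\Q_p}(T)$: one has $H^1_{\mathrm{Iw}}(F_\gq(\mu_{p^\infty}),T)\cong N_{\Q_p}(T)^{\psi=1}$, an explicit map $N_{\Q_p}(T)^{\psi=1}\to N_{\Q_p}(T)/N_{\Q_p}(T)^{\psi=0}$ or its variant landing in $\mathcal{H}(\Gamma_\cyc)\otimes\Dcrisq(T)$ built from $1-\varphi$, and then the signed maps $\col_{T,\gq,i}^{\cyc}$ are extracted by writing the image against the fixed Hodge-compatible basis $\{u_1,\dots,u_{g_\gq}\}$ and dividing out by the logarithmic matrix $M_{T,\gq}$, i.e. by inverting the relation $\mathcal{L}_{T,\gq} = (u_1,\dots,u_{g_\gq})\cdot M_{T,\gq}\cdot (\col_{T,\gq,1}^{\cyc},\dots)^{t}$. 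By \eqref{eq:Wach} the isomorphism $N_{\Q_p}(T)/p\cong N_{\Q_p}(T')/p$ is $\varphi$-, $\Gamma$- and filtration-equivariant; hence it carries the map $1-\varphi$ to the map $1-\varphi$, carries $N_{\Q_p}(T)^{\psi=1}/p$ to $N_{\Q_p}(T')^{\psi=1}/p$ compatibly with the cohomological identifications, and (by our choice of Hodge-compatible bases reducing to one another, made just before the statement of this lemma) carries the matrices $C_{\phi,\gq}, C_\gq$ — hence all the $C_{\gq,n}$ and $M_{\gq,n}$, hence the limit $M_{T,\gq}$ — to the corresponding matrices for $T'$ modulo $p$. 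Therefore the entire defining relation for $\col_{T,\gq,i}^{\cyc}$ reduces modulo $p$ to the defining relation for $\col_{T',\gq,i}^{\cyc}$, so if $z$ and $z'$ have the same image mod $p$ then $\col_{T,\gq,i}^{\cyc}(z)\equiv\col_{T',\gq,i}^{\cyc}(z')\pmod p$. Passing to the inverse limit over $L_m$ and tensoring with $S_{L_\infty/\Q_p}$ (both exact operations, compatible with mod-$p$ reduction) upgrades this to the two-variable statement $\col_{T,i}^\infty(z)\equiv\col_{T',i}^\infty(z')\pmod p$.

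The main obstacle I anticipate is not conceptual but bookkeeping: one must be careful that the logarithmic matrix $M_{T,\gq}$ has entries in $\mathcal{H}(\Gamma_\cyc)$ rather than in the Iwasawa algebra, so ``reduction mod $p$'' of the relation $\mathcal{L}_{T,\gq}=(u_\bullet)M_{T,\gq}(\col_\bullet)$ has to be handled at the level of the integral structures — that is, one should argue directly that the $\Z_p[[\Omega_p]]$-valued maps $\col_{T,i}^\infty$ themselves (not merely $\mathcal{L}_{T,\gq}$) are obtained from integral data that reduce mod $p$, invoking that $\col_{T,i}^\infty$ lands in $\Z_p[[\Omega_p]]$ by \eqref{eq:coleman} and that the construction in \cite[Theorem 2.13]{BL17} of these integral Coleman maps is functorial in the Wach module. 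Concretely, I would phrase the conclusion as: the isomorphism \eqref{eq:Wach} induces a commutative square relating $\col_{T,i}^\infty \bmod p$ and $\col_{T',i}^\infty\bmod p$ via the isomorphisms of Lemma \ref{lem: modp} on the source and the identity on $\Z_p[[\Omega_p]]/p=\mathbb{F}_p[[\Omega_p]]$ on the target, and then the lemma is immediate. Once this square is in place, the claimed congruence of $\col_{T,i}^\infty(z)$ and $\col_{T',i}^\infty(z')$ follows at once.
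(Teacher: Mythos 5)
Your proposal is correct and follows the same skeleton as the paper's proof: identify $H^1_{\mathrm{Iw}}(F_{\infty,\gq},T)$ with $\varprojlim_{L_m}H^1_{\mathrm{Iw}}(L_{m,\cyc},T)$, reduce the statement to a congruence of the one-variable Coleman maps $\col_{T,L_m,i}(z_m)\equiv\col_{T',L_m,i}(z'_m)\pmod p$ at each unramified layer, and then pass to the inverse limit and tensor with the Yager module. The one place where you diverge is the treatment of that one-variable congruence: the paper simply sets up the commutative square comparing the isomorphism of Lemma \ref{lem: modp} with the layerwise isomorphism $H^1_{\mathrm{Iw}}(L_{m,\cyc},T)/p\cong H^1_{\mathrm{Iw}}(L_{m,\cyc},T')/p$ of \cite[Eq.\ (18), Section 3.2]{ponsinet} (so that $z_m$ and $z'_m$ match mod $p$) and then cites \cite[Proposition 3.4]{ponsinet} for the congruence of the cyclotomic Coleman maps, whereas you sketch a direct proof of that ingredient from the Wach-module construction of \cite{BL17}. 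Your sketch is essentially the argument underlying Ponsinet's proposition, and you correctly flag its one delicate point: the integral maps $\col_{T,L_m,i}$ are not obtained by inverting the relation $\mathcal{L}_{T,\gq}=(u_\bullet)M_{T,\gq}(\col_\bullet)$, whose matrix lives in $\mathcal{H}(\Gamma_\cyc)$, but come directly from the Wach module via $1-\varphi$ and the decomposition of $(\varphi^*N(T))^{\psi=0}$ with respect to the chosen basis, and it is at that integral level that the $\varphi$-, Galois- and filtration-equivariance of the isomorphism \eqref{eq:Wach} (together with the compatible choice of Hodge-compatible bases) yields the mod-$p$ congruence. So your route buys a self-contained argument at the cost of redoing work that is already available in the literature, while the paper's route is shorter by outsourcing exactly this step; either way the two-variable statement then follows by the limit-and-Yager-module argument you describe.
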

\begin{proof}
Recall that $L_m$ is the unramified $\Z_p$-extension of degree $p^m$ over $\Q_p$ and $L_{m,cyc}$ is the cyclotomic $\Z_p$-extension of $L_m$. We identify $H^1_{\mathrm{Iw}}(F_{\infty,\gq},T)$ with $H^1_{\mathrm{Iw}}(k_{\infty},T)\cong\varprojlim_{L_m}H^1_{\mathrm{Iw}}(L_{m,cyc},T)$ and note that we have the following commutative diagram.

\[
\begin{tikzcd}
  H^1_{\mathrm{Iw}}(k_{\infty},T)/p \arrow[r, "~"] \arrow[d, " \cong"]
    & H^1_{\mathrm{Iw}}(L_{m,cyc},T)/p \arrow[d, "\cong"] \\
  H^1_{\mathrm{Iw}}(k_{\infty},T^\prime)/p \arrow[r]
&  H^1_{\mathrm{Iw}}(L_{m,cyc},T^\prime)/p \end{tikzcd}
\]
The horizontal arrows are the natural projection maps. The left vertical arrow is the isomorphism from Lemma \ref{lem: modp}. The right vertical arrow is an isomorphism which follows from \cite[Eq. (18) in Section 3.2]{ponsinet}.  Let $z = \varprojlim_{L_m} z_m$ where $z_m \in H^1_{\mathrm{Iw}}(L_{m,cyc},T)$ and correspondingly define $z_m^\prime$. Our hypotheses imply that $z_m$ and $z_m^\prime$ have the same image under the isomorphism given by the right vertical arrow of the commutative square given above.

From  \eqref{two-variable-coleman}, 
recall that
\begin{equation}\label{eq:label1}
\col_{T,i}^\infty((z_m))=(y_{L_\infty/\Q_p}\otimes 1) \circ (\varprojlim_{L_m} \col_{T,L_m,i}(z_m)).
\end{equation}

By \cite[Proposition 3.4]{ponsinet}, $\col_{T,L_m,i}(z_m)$ and $\col_{T^\prime,L_m,i}(z_m^\prime)$ are congruent modulo $p$.
The result follows by taking inverse limits and tensoring with the Yager module using \eqref{eq:label1}.
\end{proof}

\begin{remark}
   Note that if we had assumed that $\mathcal{X}_{\underline{I}^c}(M/F_\infty)$ is a torsion $\Lambda(\Omega)$-module, then the map $\delta$ will be a surjection (see Proposition \ref{prop:one}). So it poses a natural question of when can we relate the torsionness of the module $\mathcal{X}_{\underline{I}^c}(M/F_\infty)$ with that of $\mathcal{X}_{\underline{I}}(M^*/F_\infty)$. The result is the following. 
   
   If the abelian variety is polarized then $\mathcal{X}_{\underline{I}^c}(M/F_\infty)$ is a torsion $\Lambda(\Omega)$-module if and only if $\mathcal{X}_{\underline{I}}(M^*/F_\infty)$ is a torsion $\Lambda(\Omega)$-module \cite[Corollary 5.13]{Dion2022}.
\end{remark}
\bibliographystyle{alpha}
\bibliography{main}
\end{document}